\newtheorem{assumption}[lemma]{Assumption}
\newcommand{\eqdef}{\stackrel{\mbox{\tiny def}}{=}}
\newcommand{\C}{\mathbb{C}}
\newcommand{\E}{\mathbb{E}}
\newcommand{\N}{\mathbb{N}}
\renewcommand{\P}{{\mathbb P}}
\newcommand{\R}{\mathbb{R}}
\newcommand{\T}{\mathbb{T}}
\newcommand{\Z}{\mathbb{Z}}
\newcommand{\nn}{\mathfrak{m}}
\newcommand{\KK}{\mathfrak{K}}
\newcommand{\Cc}{\mathcal{C}}
\newcommand{\ga}{\gamma}
\newcommand{\eps}{\varepsilon}
\newcommand{\ka}{\kappa}
\newcommand{\si}{\sigma}
\DeclareMathOperator{\dist}{dist}
\renewcommand{\subset}{\subseteq}
\newcommand{\msf}{\mathsf}
\newcommand{\virg}[1]{``{#1}''}
\newcommand{\ang}[1]{\left\langle #1 \right\rangle} 
\newcommand{\nug}{\nu_{\gamma}} 							
\newcommand{\nuN}{\nu_{\gamma,N}} 						
\newcommand{\kb}{\mathbf{k}} 								
\newcommand{\norm}[2]{\left\lvert #2 \right\rvert_{#1}}
\newcommand{\Norm}[2]{\left\Vert #2 \right\Vert_{#1}}
\newcommand{\supp}{supp}
\newcommand{\aste}{\ast_{\epsilon}}
\newcommand{\betareg}{\beta}								
\newcommand{\pptg}{\tilde{\mathfrak{p}}_{\gamma}}			
\newcommand{\ppg}{\mathfrak{p}_{\gamma}}					
\newcommand{\pp}{\mathfrak{p}}							
\newcommand{\D}{\mathcal{D}}								
\newcommand{\XgH}{X_{\gamma}^{high}}
\newcommand{\Err}{Err}
\newcommand{\Oo}{\mathcal{O}}								
\newcommand{\Qm}{Q_{\nn}}								
\def\aatg#1{\tilde{\mathfrak{a}}^{\gamma}_{#1}}			
\def\aag#1{\mathfrak{a}^{\gamma}_{#1}}					
\def\aa#1{\mathfrak{a}_{#1}}								
\def\HXXj#1{H(X^{(j)}|X|^{#1},\mathfrak{c})}				
\def\HXXgj#1#2{H(\Xg^{(j)} |\Xg|^{#1},#2)}				
\def\pg#1{p^{\hg(#1)}} 									
\def\pgm#1{p_{\nn}(#1,\sigma)} 							
\newcommand{\hg}{h_{\ga}} 								
\newcommand{\kg}{\ka_\ga} 								
\newcommand{\Kg}{K_\ga} 								
\newcommand{\LN}{\Lambda_N} 							
\newcommand{\Le}{\Lambda_\eps}		 					
\newcommand{\Hg}{\mathscr{H}_{\ga}} 						
\newcommand{\lbg}{\lambda_{\ga}} 							
\newcommand{\cg}{c_{\ga}}		 	 					
\newcommand{\mg}{m_\ga} 								
\newcommand{\Mg}{M_\ga}  								
\newcommand{\Dg}{\Delta_\ga}		 					
\newcommand{\Eg}{E_\ga} 								
\newcommand{\Xg}{X_{\ga}}		  						
\newcommand{\Xng}{X_\ga^0} 							
\newcommand{\Xn}{X^0} 							
\newcommand{\Ce}{\mathfrak{c}_\eps} 						
\def\Pg#1{P_{#1}^{\gamma}} 								
\newcommand{\Zg}{Z_{\ga}}  								
\newcommand{\Rg}{R_{\ga,t}}		 						
\newcommand{\hRg}{\hat{R}_{\ga,t}}		 				
\newcommand{\ZgH}{\Zg^{\mathrm{high}}} 					
\newcommand{\taun}{\tau_{\ga,\nn}}							 
\newcommand{\CG}{\mathfrak{c}_\ga} 						
\newcommand{\Ex}{\msf{Ext}} 								
\newcommand{\hKg}{\hat{K}_{\ga}} 							
\begin{document}

\title{Convergence of Glauber dynamic on Ising-like models with Kac interaction to $\Phi^{2n}_2$}
\author{Massimo Iberti}
\institute{University of Warwick, UK, \email{M.iberti@warwick.ac.uk}}

\maketitle

\begin{abstract}
It has been recently shown by H.Weber and J.C. Mourrat, for the two-dimensional Ising-Kac model at critical temperature, that the fluctuation field of the magnetization, under the Glauber dynamic, converges in distribution to the solution of a non linear ill-posed SPDE: the dynamical $\Phi^4_2$ equation.\\
In this article we consider the case of the multivatiate stochastic quantization equation $\Phi^{2n}_2$ on the two-dimensional torus, and we answer to a conjecture of H.Weber and H.Shen. We show that it is possible to find a state space for a spin system on the two-dimensional discrete torus undergoing Glauber dynamic with ferromagnetic Kac potential, such that the fluctuation field converges in distribution to $\Phi^{2n}_2$.
\end{abstract}



\section{Introduction}
\label{sec:intro}
During the last few years there has been a huge development in the theory of SPDE, especially for what concerns the construction of solution to ill-posed SPDE introduced in the physical literature in the last decades. The main source for the difficulties was the presence in the SPDE of both a nonlinearity and a rough noise term that forces the solution to live in a space of distributions.\\
In the study of the stochastic quantization equation $\Phi^{2n}_2$, see for instance \cite{JonaMitterquantization}, a breakthrough was represented by the work \cite{dPD}, where for the first time a pathwise solution theory has been proven. This approach together with ideas from the theory of rough paths, ultimately led to the creation of a theory of regularity structures by Hairer in \cite{Regularity}, that provides a general and abstract framework for the renormalization of the equations and the definition of a pathwise solution for the so-called subcritical SPDE.
One natural question is whether or not the same strategy can be applied to discrete models arising form the statistical mechanic and stochastic lattice gasses literature. 
This question is motivated by the fact that, very recently, some of the aforementioned works have been already extended to the discrete setting \cite{hairer2015discretisations,erhard2017discretisation}.

One of the first result in this direction, has been obtained in \cite{MR1317994} in the case of a one-dimensional Ising-Kac model at criticality (we recall that the Kac-Ising model has a phase transition even in one dimension, see \cite{1966JMP, presutti2008scaling}). In this case the solution of the process is a function not a distribution, hence there is no need for a renormalization of the nonlinearity. In the article, a coupling argument with a well studied discrete process, the voter model, is employed to show the convergence in distribution of the fluctuation field to the solution of $\Phi^4_1$.\\
With a similar spirit, the authors of \cite{MourratWeber} were able to prove that the fluctuation field for the Kac-Ising model at critical temperature converges to the solution of the dynamical stochastic $\Phi^4_2$. In order to do so, not only the microscopic model has to be rescaled in a diffusive way, but also the critical inverse temperature of the Kac-Ising model had to be tuned in a precise way, as a function of the lattice size. This discrepancy in dimension two was already known \cite{cassandro1995corrections,bovier1997low}, and it played a crucial role in the renormalization of the nonlinear terms arising in the dynamic.\\
In a subsequent article, Shen and Weber \cite{ShenWeber} proved that a similar dynamical lattice model converges to the solution of the dynamical $\Phi^6_2$ equation. The model they considered is the Kac-Blume-Capel model (or \virg{site diluited} Ising model) around its critical temperature. Also in this case the parameters of the model (inverse temperature and chemical potential) have to converge, as a function of the Kac parameter, in a precise way to their respective critical values.
In this paper we answer a question that has been posed in \cite{ShenWeber}, that is the existence of models that rescale to $\Phi^{2n}_2$ for any positive $n > 1$.
\begin{equation}
\label{e:Phi2n2bad}
\partial_t X = \Delta X + a_1 X + a_2 X^{2} +  \dots + a_{2n-1} X^{2n-1} +  \xi 
\end{equation}
In order to do so one has the feeling that it would be necessary to provide the model with enough parameters, in addition to the \virg{scaling} parameter, each of them converging to a critical value. It turn out that it is possible to do so indirectly charging an \virg{a priori} reference measure $\nug$ with all the \virg{model} parameters, where $\gamma$ is the main parameter and it is orchestrating the convergence of the model.\\
One of the reason for the introduction of so many parameters is that all the monomials in \eqref{e:Phi2n2bad} need to be renormalized in dimension two. In \cite{dPD} the authors showed the existence and uniqueness of strong solutions to \eqref{e:Phi2n2notttobad}
\begin{equation}
\label{e:Phi2n2notttobad}
\partial_t X = \Delta X + a_1 X + a_2 :X^{2}: + \dots + a_{2n-1} :X^{2n-1}: +  \xi 
\end{equation}
where $a_{2n-1}<0$ for any initial condition in a Besov space of negative regularity. In the above equation $:p(X):$ denotes the Wick renormalization of the polynomial.
The $a_{2n-1}<0$ is needed to guarantee the existence of the solution to \eqref{e:Phi2n2notttobad} for all times.\\
Recently in \cite{tsatsoulis2016spectral} the authors showed that the process converges exponentially to its stationary distribution.\\

Consider an odd polynomial $\aa{1} + \aa{2} x  + \dots + \aa{2n-1}x^{2n-1}$ with negative leading coefficient and let $m \geq 1$ be a positive integer. Extending the result of \cite{MourratWeber}, in the present article we describe how to produce a spin systems on a periodic lattice together with a Gibbs measure and a (spin flip) dynamic on it, such that its fluctuation field is converging, to the solution to the following SPDE
\begin{equation}
\label{e:Phi2n2}
\partial_t X = \Delta X + \aa{1} X + \aa{3} :X |X|^{2}: + \dots + \aa{2n-1} :X |X|^{2n -2}: +  \xi 
\end{equation} 
where $X = (X^{(1)},\dots,X^{(m)})$ is a vector-valued distribution from the $2$-dimensional torus and $\xi = (\xi^{(1)},\dots,\xi^{(m)})$ is a space-time multivalued two dimensional white noise.\\
As an application of our main result, we deduce in Corollary~\ref{cor:mvectormodel} that the fluctuation field of the Glauber dynamic on the $m$-vector model converges in distribution to
\begin{equation}
\label{e:mvectorlimit}
\partial_t X = \Delta X - \frac{m}{m+2}: |X|^{2}X: + \frac{1}{\sqrt{m}}\xi
\end{equation}
In Section~\ref{sec:model-theorems} we will introduce the model and the reference measure on the state space of the spins and we define the dynamic. In Proposition~\ref{prop:existencereferencemeasure} we prove that, for $\aa{2n-1}$ in a given interval, there exists a discrete model converging to \eqref{e:Phi2n2}. In Section~\ref{subsec:limitingspde} we recall the solution theory of \eqref{e:Phi2n2} and we introduce some ingredients for the following sections. In Sections \ref{sec:linear} and \ref{sec:tightness-linear} the linear part of the process is shown to converge to the solution of the stochastic heat equation. The remaining Section~\ref{sec:nonlinear}, completes the analysis with the study of the nonlinear part of the dynamic.
\section{Models and main theorem}
\label{sec:model-theorems}
Let $N$ be a positive integer and define $\LN = [-N,N]^2 \cap \Z^2$ to be a two dimensional torus. Let $m\geq 1$ be a positive integer and let $S = \R^m$ be the state space for the spins. We will consider a reference measure $\nug$ on $S$, having the following characteristics:
\begin{enumerate}
\item $\nug$ is isotropic.
\item For all $\theta > 0$, $\nug$ has exponential moment of order $\theta$, i.e. 
\[\int \exp(\theta |\eta| ) \nug(d\eta) < \infty
\]
\item $\int_S |\eta|^2 \nug(d\eta) = m$.
\end{enumerate}
This definition takes into account the possibility to have unbounded spins (for instance Gaussian) and contains the framework of the previous works \cite{MourratWeber,ShenWeber}.\\
In addition to the above requirements, $\nug$ will have to satisfy constraints related to the form of the limiting polynomial. In order to understand the form of the further assumptions, it is necessary to introduce the model and the dynamic.
In the following pages we are going to define the dynamic and in Subsection~\ref{subsec:assumptionsnug} we will complete the list of assumptions on $\nug$.\\

Denote by $\Sigma_N = S^{\LN}$  the space of all configuration. Given a configuration $\sigma \in \Sigma_N$, for $\Lambda' \subseteq \Lambda_N$ we denote with $\sigma_{\Lambda'}$ the configuration $\sigma$ restricted on $\Lambda'$. For the singletons $\sigma_{\{x\}} = \sigma_x$.\\ On this set, we define a product measure $\nuN \eqdef \prod_{i \in \LN} \nug^{(i)}$, where each $\nug^{(i)}$ is a copy of $\nug$ at the position $i$ in the lattice. The Gibbs measure will be defined by prescribing its density with respect to $\nuN$ which we call the reference product measure.\\
\begin{remark}
It seems strange to allow the measure $\nug$ to depend on $\gamma$. The reason is that in order to obtain a generic polynomial as in \eqref{e:Phi2n2}, we need the moments of the a priory measure $\nug$ to satisfy some relations as $\gamma$ tends to $0$. The rate of convergence to this relations will be responsible for the form of the polynomial. In \cite{ShenWeber}, choosing the parameters $(\beta,\theta) = (\beta(\gamma),\theta(\gamma))$ close to a critical curve, it is shown that the Glauber dynamic converges to the solution of the dynamical stochastic quantization equation $\Phi^4_2$, while for $(\beta(\gamma),\theta(\gamma))$ close to a critical point, one obtains the convergence to $\Phi^6_2$. The reason is basically that some algebraic relations among the parameters have to be satisfied in order to annihilate more coefficients. This is not the only constraint that the parameters have to satisfy: since the solutions of the limiting equation are distribution valued processes, the divergences created by the powers of the variables have to be compensated. This means that also the speed and the direction at which the parameters approach the critical hypersurfaces needs to compensate such divergences. 
We remark furthermore that the parameter $\beta(\gamma)$ itself could have been absorbed into the measure $\nug$, but condition 3 rules out this possibility allowing a clear definition for the model and for the inverse temperature as well. It is now clear that the choice of the constant on the right-hand-side of condition 3 is arbitrary and it is just made out of convenience.\\
In order to keep the notation light, when there is no possibility of confusion, we will drop from $\beta(\gamma)$ the dependence on $\gamma$.
\end{remark}

We will now going to define our Gibbs measure.\\
Let $\KK$ be a $\Cc^2(\R^2;[0,1])$ function with support contained in $B(0,3)$, the ball of radius $3$, satisfying
\begin{equation}
\label{e:norm-kk}
\int_{\R^2} \KK(x) \, dx = 1, \qquad \int_{\R^2} \KK(x) \, |x|^2 \, dx = 4 \;. 
\end{equation}
Define the interaction kernel $\kg : \LN \to [0,\infty)$ as  $\kg(0)=0$ and
\begin{equation}\label{e:samplek}
\kg(k) = \frac{ \gamma^2 \, \KK(\ga k)  }{ \sum_{k \in \LN \setminus \{0\} }\ga^2 \, \KK(\ga k)}
 \qquad k \neq 0\;.
\end{equation}
We are interested in a family of Ising-Kac-like models of the following form: the Hamiltonian is defined as
\begin{equ}\label{e:Hamiltonian}
\Hg(\si) \eqdef -  \frac12 \sum_{k, j \in \LN}  \kg(k-j) \ang{\si_j ,  \si_k }
\end{equ}
where $\ang{\cdot,\cdot}$ is the scalar product in $\R^m$. The Gibbs  measure $\lbg^N$ on $\Sigma_N$ is defined, using the reference measure introduced above, as
\begin{equation}
\label{e:Gibbsmeasure}
 \lbg^N (\sigma) \eqdef \frac{1}{\mathcal{Z}_N}\exp{\left( - \beta \Hg(\sigma) \right)}  \nuN(\sigma)
\end{equation}
where $\mathcal{Z}_N$ is the normalization constant. We define, for $x \in \LN$
\begin{equation}
\label{e:hgdef}
\hg(x,\sigma) \eqdef \sum_{i \in \LN} \kg(x-i) \sigma_i 
\end{equation}
And we will soon abuse the notation writing $\hg(x,t)$ in the place of $\hg(x,\sigma_t)$, or $\hg(x)$ for $\hg(x,\sigma)$ when there is no time involved.\\
We will now define a dynamic over the state space $\Sigma_N$ as follows: each site of the discrete lattice is given an independent Poisson clock with rate $1$. When the clock rings at site $x \in \LN$, the spin at $x$ which is in the state $\sigma_x$, changes its value by picking one randomly distributed according to a distribution which makes the Gibbs measure \eqref{e:Gibbsmeasure} reversible for the Markov process described. \\

When a jump occurs at $x \in \LN$, the configuration changes
\[
\sigma_{\LN \setminus \{x\}} \sqcup \sigma_x \mapsto \sigma_{\LN \setminus \{x\}} \sqcup \eta_x
\]
and the new value $\eta_x$ is chosen according to  $\pg{x}$, a probability distribution on $\R^m$ depending on the energy difference between the configuration before and after the jump
\[
\beta \Delta^{x}H(\sigma) \eqdef \beta H(\sigma_{\LN	\setminus \{x\}} \sqcup \eta_{x}) -\beta  H(\sigma) =  \ang{\beta \hg(x),(\eta_x- \sigma_x)}\;.
\]
We then define $p^{\lambda}$ prescribing its density with respect to $\nug$
\begin{equation}
\label{e:pdef}
\frac{dp^{\lambda}}{d\nug}(d\eta_x) \sim  \exp(\beta\ang{\lambda,\eta_x})
\end{equation}
$\pg{x}(d\eta_x)$ will be the distribution of the spin at $x$ after a jump at $x$ has occurred.\\
\begin{remark}
The distribution $\pg{x}$ and the normalizing constant $Z_x$ only depend on $\sigma$ via ${\hg}(x) = \sum_{i \neq x} \kg(x-i)\sigma_i$, since there are some cancellations between the terms containing $\sigma_x$. In particular $\pg{x}$ is a \virg{tilted} version of the reference measure $\nug$.
\end{remark}

\begin{remark}
\label{remark:workingunderstoppingtime}
It turns out that it will be more convenient to work with a modified version of $\pg{x}$, that will be introduced in Subsection~\ref{subsec:stopping time}. The process considered this way, will coincide with the Glauber dynamic defined here \virg{up to a stopping time}.
\end{remark}

For any $\lambda \in \R^m$ we define $\Phi : \R^m \to \R^m$ by
\begin{equation}
\label{e:Phidef}
\Phi(\lambda) \eqdef \int_{S}  \eta_x  p^{\lambda}(d \eta_x)\;.
\end{equation}
The generator of the dynamic on local functions $f$ is given by
\[
\mathcal{L} f = \sum_{x \in \supp(f)}  \int_{S} \pg{x}(d\eta_x)\left( f(\eta_x \sqcup \sigma_{\{x\}^c}) - f(\sigma) \right)\;,
\]
where we denoted by $\eta_x \sqcup \sigma_{\{x\}^c}$ the concatenation of two configurations, and we set $\sigma_{\{x\}^c} = \{\sigma_j\}_{j \in \Lambda_{\epsilon} \setminus x}$.  If $f(\sigma) = {\hg}(z,\sigma)$ then
\begin{equs}
\label{e:generator}
\mathcal{L} {\hg}(z) &= \sum_{j \neq z} \kg(j,z) \mathcal{L} \sigma_j = \sum_{j \neq z} \kg(j,z)  \int_{S} \eta_j \pg{j}(d\eta_j) - {\hg}(z)  \\
&= \kg \ast \Phi({\hg}(\cdot)) (z)  - {\hg}(z),
\end{equs}
where we used the fact that $\kg(0)=0$.\\

Our study will develop around the evolution in time of the local mean field $\hg(x,t)$.  Here $(x,t)$ are \virg{microscopic coordinates} and we have
\begin{equation}
\label{e:microdynamic}
\hg(x,t) =\int_0^t \mathcal{L} \hg(x,s) ds + \mg(x,t),
\end{equation}
where $\mg$ is an $\R^m$-valued martingale with predictable quadratic variation given by the matrix, for $1 \leq i,j \leq m$
\begin{multline}
\label{e:microquadvar}
\ang{\mg^{(i)}(x,\cdot),\mg^{(j)}(z,\cdot)}_t \\
= \int_0^t \sum_{l \in \LN} \kg(l-x)\kg(l-z) \int_{S} (\eta^{(i)} - \sigma_{l}^{(i)}(s^-))(\eta^{(j)} - \sigma_l^{(j)}(s^-)) \pg{x, s^- }(d \eta)
\end{multline}
\begin{proposition}
 The Gibbs measure $\lbg^N$ is reversible with respect to the above dynamic.
\end{proposition}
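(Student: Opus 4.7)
The plan is to verify site-wise detailed balance, which for this single-site spin-flip dynamic is equivalent to reversibility of the whole generator with respect to $\lbg^N$. Concretely, since the clocks and flips at distinct sites are independent, it suffices to check that for every $x \in \LN$ and every pair of configurations $\sigma, \sigma'$ agreeing off $x$ (say $\sigma_x = a$, $\sigma'_x = b$),
\begin{equation*}
\lbg^N(d\sigma) \, \pg{x}(db\,|\,\sigma) = \lbg^N(d\sigma') \, \pg{x}(da\,|\,\sigma'),
\end{equation*}
after factoring out the common product measure on coordinates distinct from $x$.

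I would then exploit three observations in sequence. First, because $\kg(0)=0$, the local mean field $\hg(x,\sigma) = \sum_{i} \kg(x-i)\sigma_i$ does not see the spin at $x$, so $\hg(x,\sigma) = \hg(x,\sigma')$. In particular the tilt appearing in \eqref{e:pdef} is the same on both sides of the balance equation, and so is the normalizing constant $Z_x$. Second, using the symmetry of $\kg$ inherited from $\KK$ together with $\kg(0)=0$, the only terms of the Hamiltonian \eqref{e:Hamiltonian} affected by changing $\sigma_x = a$ into $b$ are those with exactly one of the two indices equal to $x$, and a short computation yields
\begin{equation*}
\Hg(\sigma') - \Hg(\sigma) = -\ang{\hg(x,\sigma),\, b - a}.
\end{equation*}
Third, by the definition \eqref{e:pdef},
\begin{equation*}
\frac{d\pg{x}}{d\nug}(\eta_x) = \frac{1}{Z_x}\exp\bigl(\beta \ang{\hg(x,\sigma),\eta_x}\bigr).
\end{equation*}

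Putting these pieces together, the ratio of the two sides of the detailed balance identity factorises into $\exp\bigl(\beta \ang{\hg(x,\sigma), b-a}\bigr) \cdot \exp\bigl(-\beta(\Hg(\sigma') - \Hg(\sigma))\bigr)$, which equals $1$ by the Hamiltonian difference computed above. Since detailed balance holds at each site and the generator is a sum of per-site generators, $\lbg^N$ is reversible. There is no genuine obstacle here; the only point requiring care is the Hamiltonian increment computation, where the symmetry $\kg(x-j)=\kg(j-x)$ is used to pair the $j=x$ and $k=x$ sums and the condition $\kg(0)=0$ is needed both to drop the self-energy diagonal term and to ensure $\hg(x,\cdot)$ is independent of $\sigma_x$, which is what makes the tilted kernel $\pg{x}$ compatible with detailed balance.
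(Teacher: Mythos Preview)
Your approach is correct and essentially identical to the paper's: both verify site-wise detailed balance, the paper phrasing it as the density identity $\frac{d\lbg^N}{d\nuN}(\sigma)\,\frac{d\pg{x}}{d\nug}(\eta_x)=\frac{d\lbg^N}{d\nuN}(\eta_x\sqcup\sigma_{\{x\}^c})\,\frac{d\pg{x}}{d\nug}(\sigma_x)$, with the same key observation that $\hg(x)$ (hence $\pg{x}$ and $Z_x$) is unchanged by the flip because $\kg(0)=0$. One small slip to fix: the Gibbs-weight contribution to your ratio is $e^{-\beta\Hg(\sigma)}/e^{-\beta\Hg(\sigma')}=\exp\bigl(+\beta(\Hg(\sigma')-\Hg(\sigma))\bigr)$, not $\exp(-\beta(\cdots))$; with the correct sign the product is $\exp(\beta\ang{\hg(x),b-a})\cdot\exp(-\beta\ang{\hg(x),b-a})=1$ as claimed.
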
 
\begin{proof}
In order to do this we are using the fact that ${\hg}(x)$ remains unchanged by the jump at $x$, by the definition \eqref{e:hgdef} and \eqref{e:samplek}, hence the measure $\pg{x}(d\eta_x)$ as well. We have
\begin{gather*}
\int f(\sigma) \mathcal{L}g(\sigma) \lbg^N(d\sigma) \\
= \int f \left( \sum_{x \in \Lambda_N}  \int_{\R^m} \pg{x}(d\eta_x) g(\eta_x \sqcup \sigma_{\{x\}^c}) - g(\sigma)\right) \lbg^N(\sigma) \\
= \sum_{x \in \Lambda_N} \int f(\sigma)g(\eta_x \sqcup \sigma_{ x^c}) \frac{d\lbg^N}{d\nuN}(\sigma) \nuN(d \sigma) \frac{d \pg{x}}{d\nug}(\eta_x) \nug(d \eta_x) - \int f(\sigma) g(\sigma) \lbg^N(d\sigma)
\end{gather*}
where we used the fact that $\frac{dp}{d\nug}$ is defined in (\ref{e:pdef}). Since $\pg{x}$ doesn't contain the variable $\sigma_x$ and 
\[
\frac{d\lbg^N}{\nuN}(\sigma) \frac{d \pg{x}}{d\nug}(\eta_x) = \frac{d\lbg^N}{\nuN}(\eta_x \sqcup \sigma_{\{x\}^c}) \frac{d \pg{x}}{d\nug}(\sigma_x) \qquad \nuN \otimes \nug-a.s.\;,
\]
we conclude that $\lbg$ is reversible with respect to the generator $\mathcal{L}$
\[
\int f(\sigma) \mathcal{L}g(\sigma) \lbg^N(d\sigma) = \int g(\sigma) \mathcal{L}f(\sigma) \lbg^N(d\sigma).
\]
\end{proof}

\begin{remark}
\label{Remark:mourratwebershen}
We would like to point out that this framework covers the cases in \cite{MourratWeber} and \cite{ShenWeber}, having respectively
\begin{align*}
\text{(Kac-Ising)}\qquad\nug^{I} &= \frac12 \delta_{-1} + \frac12 \delta_{1}\\
\text{(Kac-Blume-Capel)}\qquad\nug^{BC} &= \frac{e^{\theta}}{1 + 2e^{\theta}} \delta_{-1} +\frac{1}{1 + 2e^{\theta}}\delta_0 + \frac{e^{\theta}}{1 + 2e^{\theta}} \delta_{1}
\end{align*}
where $\delta_x$ is the Dirac measure at $x$. Here $m=1$ (recall that $m$ is the dimension of the state space). Moreover the rate function for the Kac-Blume-Capel model (formula 2.5 in \cite{ShenWeber}), can be written using \eqref{e:pdef}
\[
\pg{x}(d \eta_x) = \frac{\exp\left( \beta \hg(x) \eta_x\right) \nug^{BC}(d \eta_x)}{\int_{\R}\exp\{\beta \hg(x) \zeta\}\nug^{BC}(d\zeta)} = \begin{cases}
\frac{e^{-\beta \hg(x) + \theta}}{1 + e^{-\beta \hg(x) + \theta} + e^{\beta \hg(x) + \theta}} & \text{ if } \eta_x = -1\\
\frac{1}{1 + e^{-\beta \hg(x) + \theta} + e^{\beta \hg(x) + \theta}} & \text{ if } \eta_x = 0\\
\frac{e^{\beta \hg(x) + \theta}}{1 + e^{-\beta \hg(x) + \theta} + e^{\beta \hg(x) + \theta}} & \text{ if } \eta_x = 1
\end{cases}
\]
We observe that
\begin{equation}
\label{e:shenwebervariance}
\int_{\R} x^2 \nug^{BC}(dx) = \frac{2e^{\theta}}{1 + 2e^{\theta}} \neq m\;,
\end{equation}
hence $\nug^{BC}$ doesn't satisfy the third assumption for a reference measure (this is not a problem as it is always possible to rescale the value of spins). This minor difference is responsible for the fact that the critical temperature of the Kac-Blume-Capel model considered in \cite{ShenWeber} is different from the critical temperatures in our models. In a similar way, if $m>1$ the $m$-vector model is a generalization of the Ising model defined for
\[
\text{(m-vector model)}\qquad\nug(\eta) = \omega_m^{-1}\delta_1(|\eta|)
\]
where $\omega_m$ is the surface of the $m-1$-dimensional sphere. Also in this case $\int_S |\eta|^2 \nug(d\eta) = 1\neq m$.
\end{remark}

\subsection{Rescaling}
\label{subsec:rescaling}
We are interested in the fluctuation of the local magnetization $\hg$ when the parameter $\beta$ is close to its mean field critical value $\beta_c$.
In order for the fluctuations to survive the rescaling process, we must ensure that the quantity $\mathcal{L}\hg$ doesn't dominate the fluctuations. First recall
\begin{align*}
\mathcal{L}\hg(z) = \kg \ast \Phi({\hg}(\cdot)) (z)  - {\hg}(z)  = \kg \ast \left( \Phi({\hg}(\cdot)) (z)  - {\hg}(z) \right) + (\kg - \delta_0 )\ast {\hg}(z) 
\end{align*}
then rewrite \eqref{e:Phidef} as
\begin{equation}
\label{e:momgenfun1}
\Phi({\hg}(\cdot)) (z) - \hg(z)= \nabla_{\lambda}|_{\lambda = \beta \hg(z)} \left(\log   \int_S e^{\ang{\lambda,\eta}} \nug(d\eta) - \frac{|\lambda|^2}{2 \beta} \right)
\end{equation}
the critical $\beta_c$ is chosen in order to make $\hg(z)$ the Taylor expansion at fist order of $\Phi({\hg}(\cdot)) (z)$ and it is given in this case by $\beta_c \eqdef 1$.\\
The value of $\beta = \beta(\gamma)$ will be taken suitably close to $1$, and its precise value will be given in \eqref{e:betagamma}. This discrepancy will be used to compensate the divergences in the renormalization of the of the field $\Xg$.\\
In order to see a non trivial polynomial in the limit, heuristically, one has to look at \eqref{e:momgenfun1} and write it as
\begin{equation}
\Phi({\hg}(\cdot)) (z) - \hg(z) = \nabla_{\lambda}|_{\lambda = \beta \hg(z)}  \log   \int_S e^{\ang{\lambda,\eta} - \frac{|\lambda|^2}{2 \beta}} \nug(d\eta)\;.
\end{equation}

At this point one recognizes the fact that, if $\nug$ were a $m$-dimensional Gaussian with mean zero and covariance matrix $\beta^{-1} I$, the quantity would vanish. In order to make some of the powers of $\lambda$ vanish, we would need $\nug$ to have some moments in common with a Gaussian measure, when $\gamma \to 0$. The more moments of $\nug$ are Gaussian, the higher the degree of the polynomial.\\
In our case the critical hypersurfaces are referred to the moments of $\nug$. This will be further explained in Subsection~\ref{subsec:beta-nug}. 

\[
\Phi({\hg}(\cdot)) (z) - \hg(z) = \frac{\int_{S} \eta e^{\beta\ang{\eta, \hg(z)}} \nug(d\eta)}{\int_{S} e^{\beta\ang{\eta,\hg(z)}} \nug(d\eta)} - \hg(z)\;.
\]
Since $\nu$ is rotation invariant, $\Phi({\hg}(\cdot)) (z)$ is a scalar multiple of $\hg(z)$ hence
\begin{multline}
\Phi({\hg}(\cdot)) (z) - \hg(z)= \frac{\hg(z)}{|\hg(z)|^2} \left( \frac{\sum_{j \text{ odd}} \frac{1}{j!} \beta^j \int |\ang{\hg(z),\eta}|^{j+1} \nug(d\eta) }{\sum_{j \text{ even}} \frac{1}{j!} \beta^j \int |\ang{\hg(z),\eta}|^{j} \nug(d\eta)  } \right) - \hg(z) \\
= \hg(z) \left( a_1^{\gamma} + a_3^{\gamma} |\hg(z)|^2 + \dots + a_{2n-1}^{\gamma} |\hg(z)|^{2n-2} + \Oo(|\hg(z)|^{2n}) \right)\;.
\label{e:TaylorPhimicro}
\end{multline}
The above coefficients only depend on the even moments of the measure $\nug$ and on $\beta(\gamma)$, which is however a bounded, fixed function of $\gamma$. We will discuss about the existence of a suitable measure $\nug$ and its properties in Subsection~\ref{subsec:beta-nug}.\\
\begin{remark}
\label{remark:betac}
For the calculation above, the first coefficient $a_1^{\gamma}$ is
\[
a_1^{\gamma} = \frac{\beta(\gamma)}{|\hg(z)|^2}  \int_S \left(\sum_{j=1}^m  \hg(z)^{(j)} \eta^{(j)}\right)^2 \nug(d\eta) - 1 = \frac{\beta(\gamma)}{m}\int_S |\eta|^2 \nug(d\eta) - 1  = \beta(\gamma) - 1\;.
\]
This shows the motivation behind the assumptions on the reference measure $\nug$ and the choice for $\int_S |\eta|^2 \nug(d\eta)$. This exact calculation, together with \eqref{e:shenwebervariance}, provides the form of the critical line in \cite[Fig.~1]{ShenWeber}.
\end{remark}

We now introduce the parameters that will provide our scaling: as in \cite{MourratWeber} we will work in a suitable discretization of the $2$-dimensional torus $\T^2 = [-1,1]^2$. Let $\epsilon = \frac{2}{2N+1}$ and consider the discretization $\Le = ([-1,1] \cap \epsilon \Z)^2$. Define the rescaled field

\begin{equation}
\label{e:Xdef}
\Xg(x,t) \eqdef \delta^{-1} \hg(x/\epsilon,t/\alpha)
\end{equation}
where $(x,t) \in \Le \times [0,\infty)$ are macroscopic coordinates and $\alpha$ and $\delta$ are the scaling parameter of the time and the height of the field respectively. The relations between $(\epsilon,\alpha,\delta)$ have to be chosen in a specific way that we will describe below.\\

In macroscopic coordinates the effect of the Glauber dynamic on $\Xg$ is given for $x \in \Le$, $t \in [0,T]$ by the multidimensional SPDE
\begin{equation}
\label{e:Xequation1}
\begin{split}
d \Xg(x,t) &= \frac{\epsilon^2}{\gamma^2 \alpha} \Delta_{\gamma} \Xg(x,t)dt \\
&+ \Kg \aste \Xg \left( \frac{1}{\alpha} a_1^{\gamma} + \frac{\delta^2}{\alpha} a_3^{\gamma} |\Xg|^2 + \dots + \frac{\delta^{2n-2}}{\alpha} a_{2n-1}^{\gamma} |\Xg|^{2n-2} \right)(x,t)dt\\
&+ \Kg \aste \Eg(x,t) dt + d\Mg(x,t)\;.
\end{split}
\end{equation}

Where $\Kg(x) \eqdef \epsilon^{-2} \kg(\epsilon x)$ is approximating a Dirac distribution, the convolution is defined $F \aste G (x)\eqdef \sum_{y \in \Le} \epsilon^2 F(x-y)G(y)$ and $\Delta_{\gamma} \Xg = \frac{\gamma^2}{\epsilon^2}\left( \Kg \aste  \Xg -\Xg \right)$ is approximating a continuous Laplacian.\\
The form of the error term $\Eg$ can be deduced from \eqref{e:TaylorPhimicro}, and the value of the coefficients of the polynomial as well.\\

From \eqref{e:microquadvar} the martingale $\Mg(x,t)=\delta^{-1} \mg(\epsilon^{-1}x,\alpha^{-1}t)$ has predictable quadratic variation given by the matrix
\begin{equation}
\label{e:quadvarMg}
\begin{split}
&\ang{{\Mg}^{(i)} (x,\cdot) , {\Mg}^{(j)}(y,\cdot)}_t \\
&=\frac{\epsilon^2}{\delta^2 \alpha}\int_0^t \sum_{z \in \Le} \epsilon^2 \Kg(x-z)\Kg(y-z) \times \\
&\times \int_{S} (\eta^{(i)} - \sigma^{(i)}_{\alpha^{-1} s^-}(\epsilon^{-1}z))(\eta^{(j)} - \sigma^{(j)}_{\alpha^{-1} s^-}(\epsilon^{-1}z)) \pg{\epsilon^{-1}z, \alpha^{-1}s^- }(d \eta) ds
\end{split}
\end{equation}
where the superscript $(i)$ indicates the $i$-th component of a vector in $\R^m$. We then set
\begin{equation}
\label{e:ppowersdef}
Q^{i,j}(s,z) \eqdef \int_{S} (\eta^{(i)} - \sigma^{(i)}_{\alpha^{-1} s}(\epsilon^{-1}z))(\eta^{(j)} - \sigma^{(j)}_{\alpha^{-1} s}(\epsilon^{-1}z)) \pg{\epsilon^{-1}z, \alpha^{-1}s}(d \eta)\;.
\end{equation}
The conditions we have to impose to have in the limit, at least heuristically, the noise, the Laplacian and the $2n-1$ power of the field is to have
\[
\frac{\epsilon^2}{\delta^2 \alpha} \sim \frac{\epsilon^2}{\gamma^2 \alpha} \sim \frac{\delta^{2n-2}}{\alpha} \sim 1\;,
\]
which yields the scaling
\begin{equation}
\label{e:scaling}
\epsilon \simeq \gamma^{n}\;,\qquad \alpha = \gamma^{2n-2}\;, \qquad \delta = \gamma\;.
\end{equation}
It readily follows that the coefficients $a_1,a_3,\dots,a_{2n-3}$ have to vanish in $\gamma$ with a certain order. On top of that, the powers of the field $\Xg$ need to be substituted by their Wick powers. From \eqref{e:scaling} and \eqref{e:Xequation1}, define the coefficients $\aatg{1},\dots,\aatg{2n-1}$ and the polynomial $\pptg$ as
\begin{equation}
\label{e:coeffdef1}
a_1 = \delta^{2n-2} \aatg{1}\;, \qquad a_3 = \delta^{2n-4} \aatg{3}  \qquad \cdots \qquad a_{2n-1} =\aatg{2n-1} \;.
\end{equation}

\begin{equation}
\label{e:polydef}
\pptg(\Xg(z,t))= \Xg \left( \aatg{1} + \aatg{3} |\Xg|^2 + \dots + \aatg{2n-1} |\Xg|^{2n-2} \right)(z,t)
\end{equation}
where we recall that $\Xg$ is a vector-valued field $\Xg : \Le \mapsto \R^m$. In particular $\pptg(\Xg(\cdot,t)) : \Le \mapsto \R^m$ and we shall refer to $\pptg^{(j)}(\Xg)$ when we are considering its $j$-th component.\\

With this scaling the error in \eqref{e:Xequation1} can be bounded by
\begin{equation}
\label{e:errorterm}
|\Eg(x,t)| \leq C \gamma^2|\Xg(x,t)|^{2n+1} \int_S e^{\gamma \beta |\Xg(x,t)| |\eta|} \nug(d\eta)\;.
\end{equation}

Recall that in \eqref{e:Xequation1} all powers in the brackets are even powers and they can be rewritten as symmetric functions of $\Xg^{(1)}, \dots,\Xg^{(m)}$ the components of the vector $\Xg$. It is then clear that the renormalized polynomial would be a symmetric function of $\Xg^{(1)2}, \dots,\Xg^{(m)2}$, it is not clear a priori whether it is possible to renormalize \eqref{e:polydef} just making use of the dependency on $\gamma$ of the coefficients $\aatg{i}$ for $i = 1,\dots,2n-1$. We will describe how to do so in a systematic way in Remark \ref{rmk:Hermitepoly}, defining $\aag{2k-1}$ in \eqref{e:coeffdef2}, and we will describe it more carefully in Section~\ref{sec:nonlinear}, where we will perform such renormalization in detail.\\
The polynomial arising after the renormalization can be written as
\begin{equation}
\label{e:polyrinormilizeddef}
\ppg^{(j)}(\Xg(z,t))  = \aag{1}\Xg^{(j)} (z,t) + \aag{3} :\Xg^{(j)}|\Xg|^{ 2 }:(z,t) + \dots + \aag{2n-1} :\Xg^{(j)}|\Xg|^{2n-2}:(z,t)
\end{equation}
and $\pp$ the polynomial that is expected to be found in the limit
\begin{equation}
\label{e:polylimitdef}
\pp^{(j)}(X(z,t))  = \aa{1}X^{(j)} (z,t) + \aa{3} :X^{(j)}|X|^{ 2 }:(z,t) + \dots + \aa{2n-1} :X^{(j)}|X|^{2n-2}:(z,t)\;.
\end{equation}

In the above equations $:\Xg^{(j)}|\Xg|^{ 2j }:$ and $:X^{(j)}|X|^{ 2j }:$ have to be understood as the renormalized counterparts of $\Xg^{(j)}|\Xg|^{ 2j }$ and $X^{(j)}|X|^{ 2j }$ respectively. In order to go from \eqref{e:polyrinormilizeddef} to \eqref{e:polylimitdef} we need an assumption on the measure $\nug$, where the coefficients are ultimately coming from. All the assumptions on $\nug$ are collected in Section~\ref{subsec:assumptionsnug}.

\subsection{Assumptions over $\nug$ and the initial condition}
\label{subsec:assumptionsnug}
We now state together the assumption on the reference measure $\nug$ and on the initial distribution of the spins $\{\sigma_x(0)\}_{x \in \LN}$ in the same section for convenience.

\begin{assumption}
The measure $\nug$ on $S \subset \R^m$ is isotropic, has exponential moments of any order, i.e. for any $\theta \in \R^m$
\begin{equation}
\label{e:hpexponentialmoment}
\tag{M0}
\int_S e^{\ang{\theta,\eta}} \nug(d \eta) < \infty
\end{equation}
uniformly in $\gamma$ in a neighborhood of the origin, and moreover $\int_S |\eta|^2 \nug(d \eta) = m$.
\end{assumption}
Recall the coefficient used in \eqref{e:polyrinormilizeddef}, defined in \eqref{e:coeffdef2}. The following is a condition, given in a very implicit form, on the form of the moments of the measure $\nug$. 
\begin{assumption}
There exists $\aa{1},  \aa{3},\dots, \aa{2n-1} \in \R$,  $c_0 >0$ and $\lambda_0 > 0$ such that
\begin{equation}
\label{e:hpmeasure}
\tag{M1}
\sup_{k = 1, \dots, n} \norm{}{\aag{2k-1} - \aa{2k-1}} \leq c_0 \gamma^{\lambda_0}\;.
\end{equation}
\end{assumption}

The next assumption provides the control for large times

\begin{assumption}
The leading coefficient of the limiting polynomials $\pp^{(j)}$ (see \eqref{e:hpmeasure}) of degree $2n-1$ satisfy
\begin{equation}
\label{e:hppoly}\tag{M2}
\aa{2n-1} < 0\;.
\end{equation}
\end{assumption}

In order to prove the convergence of the linear and non linear dynamic, we now state the hypothesis on the initial distribution of the spins $\sigma(0)$. 
Two hypothesis are needed in order to prove the result: they are mainly needed to control the processes uniformly in $\gamma$. The first one concerns the regularity of the initial profile:
\begin{assumption} 
Let $\Xn \in \Cc^{-\nu}(\T^2)$ for any $\nu > 0$.
\begin{equation}
\label{e:hp1}\tag{I1}
\lim_{\gamma \to 0} \E \Norm{\Cc^{-\nu}}{ \delta^{-1}\hg(\cdot,0) - \Xn} = 0\;.
\end{equation}
\end{assumption}
This will be used to control the contribution of the initial condition to the $\Cc^{-\nu}$ norm of the process. The second assumption is used to get a uniform control over the quantity \eqref{e:ppowersdef}. It is a control over the starting measure.
\begin{assumption}
For all $p \geq 1$ there exists a $\gamma_0>0$ such that
\begin{equation}
\label{e:hp2}\tag{I2}
\sup_{\gamma < \gamma_0}\E \left[ \Norm{L^{\infty}}{\sigma(0)}^p \right] < \infty\;.
\end{equation}
\end{assumption}
In the work \cite{MourratWeber} the \eqref{e:hp2} assumption is not needed since the state space of the spins is a compact set.\\
Condition \eqref{e:hp2} can be relaxed to the assumption that the initial condition has all moments pointwise, i.e.
\begin{equation}
\label{e:hp2bis}\tag{I2'}
\sup_{\gamma < \gamma_0}\sup_{z \in \Le}\E \left[ |\sigma_{z}(0)|^p \right] < \infty\;.
\end{equation}
With assumption \eqref{e:hp2bis} and the monotonicity of $L^{p}$ norms one can prove \eqref{e:hp2} up to any small negative power of $\gamma$:
\begin{equation}
\label{e:hpbound}
\E \left[ \Norm{L^{\infty}}{\sigma(0)}^p \right]  \lesssim \gamma^{-\kappa}\;.
\end{equation}

The role of the above assumption is related to the control on the quadratic variation of the process

\subsection{Hermite Polynomials and renormalization}
\label{subsec:hermite}

The aim of this subsection is to clarify the way we renormalize the polynomial arising from the discrete model and to recall some general fact for later reference.\\ 
Recall that for a multiindex $\kb = (k_1,\cdots,k_m) \in \N^n$ and vector $\bar{X} = (X_1,\dots , X_m)$ and matrix $\bar{T} = (T_{i,j})_{i,j=1}^m$ the multivariate Hermite polynomial $H_{\kb}$ are defined as the coefficients of the Taylor expansion 
\begin{equation}
\label{e:Hermiteexponential}
\exp \left\lbrace \sum_{j=1}^m \lambda_j X_j - \frac{1}{2}\sum_{i,j =1}^m \lambda_i \lambda_j T_{i,j}\right\rbrace = \sum_{k_1,\dots,k_m \geq 0} \frac{\lambda^{\kb}}{\kb!} H_{\kb}(\bar{X},\bar{T})
\end{equation}
where we used the convention $\lambda^{\kb}=\lambda^{k_1} \cdots \lambda^{k_m} $ and $\kb! = k_1! \cdots k_m!$
From the above definition we obtain the identities 
\begin{align*}
\partial_{X_j} H_{(k_1,\dots,k_m)}(\bar{X},\bar{T}) &= k_j H_{(k_1,\dots,k_j -1\dots,k_m)}(\bar{X},\bar{T})\\
\partial_{T_{i,j}} H_{(k_1,\dots,k_m)}(\bar{X},\bar{T}) & = - \frac{1}{2} \begin{cases} k_i k_j H_{(k_1,\dots,k_i -1\dots,k_j -1\dots,k_m)}(\bar{X},\bar{T}) & \text{ for } i \neq j\\
(k_j - 1) k_j H_{(k_1,\dots,k_j -2\dots,k_m)}(\bar{X},\bar{T}) & \text{ for } i = j \end{cases}
\end{align*}
and for $\bar{V} \in \R^m$
\begin{equation}
\label{e:hermitesum}
H_{\kb}(\bar{X} + \bar{V},\bar{T}) = \sum_{\substack{\mathbf{a} \in \N^m\\ \mathbf{a} \leq \kb}} \bar{V}^{\mathbf{a}} H_{\kb - \mathbf{a}}(\bar{X},\bar{T})\;.
\end{equation}
Assume now that $\bar{T} = \mathfrak{c} I_m$ where $I_m$ is the identity matrix in $\R^{m \times m}$ and $\mathfrak{c} > 0$ we abuse the notation writing instead of $H_{\kb}(\bar{X},\mathfrak{c}I_m)$
\begin{equation}
\label{cap2multdimorthermite}
H(X_1^{k_1} \cdots X_m^{k_m},\mathfrak{c}I_m) = \prod_{j=1}^m H_{k_j}(X_j,\mathfrak{c})
\end{equation}
and we extend by linearity the above expression to any polynomial in $m$ variables setting
\begin{equation*}
H\left(\sum_{\mathbf{a} \in \N^m} b_{\mathbf{a}} X_1^{a_1} \cdots X_m^{a_m},\mathfrak{c}I_m\right) = \sum_{\mathbf{a} \in \N^m} b_{\mathbf{a}} H(X_1^{a_1} \cdots X_m^{a_m},\mathfrak{c}I_m)\;.
\end{equation*}

The notation above can be justified with the following expression for the $n$-th Hermite polynomial
\begin{equation}
\label{e:Hermiterenormalization}
H_{\kb}(\bar{X},\mathfrak{c} I_m) = e^{- \frac{\mathfrak{c}}{2}\Delta} \bar{X}^{\kb} = \left(1 - \frac{\mathfrak{c}}{2}\Delta + \frac{\mathfrak{c}^2}{8}\Delta^2 + \dots \right) \bar{X}^{\kb} \;.
\end{equation}

\begin{remark}
\label{rmk:Hermitepoly}
We will use in the following sections the fact that the renormalization for the polynomial $X^{(i)}|\bar{X}|^{2n}$ is given by a similar polynomial
\[
H(X^{(i)}|\bar{X}|^{2n},\mathfrak{c}I_m) = e^{-\frac{\mathfrak{c}}{2} \Delta} X^{(i)}|\bar{X}|^{2n} = \sum_{k = 1}^{n} b_k(\mathfrak{c}) X^{(i)}|\bar{X}|^{2k}\;,
\]
this can be seen using the fact that
\begin{equation}
\label{e:Deltatopoly}
\begin{split}
\Delta X^{(i)}|\bar{X}|^{2n} &= \left(\sum_{j=1}^m \partial_j^2 \right) X^{(i)}|\bar{X}|^{2n} \\
&= \partial_i|\bar{X}|^{2n}  + 2n \partial_i X^{(i)2}|\bar{X}|^{2n-2}+ \sum_{j \neq i} 2n \partial_j X^{(i)}X^{(j)}|\bar{X}|^{2n-2}  \\
&= 2n X^{(i)}|\bar{X}|^{2n-2} + 4n X^{(i)}|\bar{X}|^{2n-2} + 2n(2n-2) X^{(i)3} |\bar{X}|^{2n-4} \\
&+ \sum_{j \neq i} 2n X^{(i)}|\bar{X}|^{2n-2} + 2n (2n-2)X^{(i)}X^{(j)2}|\bar{X}|^{2n-4} \\
&=  2n (m+2n)  X^{(i)} |\bar{X}|^{2n-2} \;.
\end{split} 
\end{equation}
\end{remark}

The issue that we are going to discuss now concerns the renormalization procedure to highlight the Wick powers of the field $\Xg$.
\begin{align}
\nonumber
\pptg^{(j)}(X) &= \sum_{k=0}^{n-1} \aatg{2j+1} e^{\frac{\mathfrak{c}}{2}\Delta_X}e^{-\frac{\mathfrak{c}}{2}\Delta_X}  X^{(j)} |X|^{2k} = \sum_{k=0}^{n-1} \aatg{2j+1} e^{\frac{\mathfrak{c}}{2}\Delta_X}\HXXj{2k} \\
&= \sum_{k=0}^{n-1}\left(e^{\frac{\mathfrak{c}}{2}\Delta_X^*} \aatg{}\right)_{2j+1}\HXXj{2k} \;.
\label{e:polyrenorm1}
\end{align}
Hence we define the coefficients in \eqref{e:polyrinormilizeddef} as
\begin{equation}
\label{e:coeffdef2}
\aag{2k+1} \eqdef \left(e^{\frac{\CG}{2}\Delta_X^*} \aatg{}\right)_{2k+1}\;.
\end{equation}
It is immediate to see from its definition that the exponential of $\Delta_X^*$ is a well defined operation on the space $l^{\infty}_0$ of the sequences which are eventually zero.\\
The above calculation shows how is possible to write the polynomial $\ppg^{(j)}(X)$ in \eqref{e:polydef} as
\begin{multline}
\label{e:polyrenorm}
\pptg^{(j)}(X)  =\aag{1}X^{(j)} + \aag{3} H(X^{(j)}|X|^{2},\CG)+ \dots + \aag{2n-1} H(X^{(j)}|X|^{2m-2},\CG)\;.
\end{multline}

\begin{remark}
As a consequence of this definition and the assumption of the Subsection~\ref{subsec:assumptionsnug}, we can provide a formula for the value of $\beta(\gamma)$ and its discrepancy from its critical value $1$. From Remark~\ref{remark:betac}, the expression for $a_1^{\gamma}$ in \eqref{e:coeffdef1}, \eqref{e:scaling} and \eqref{e:coeffdef2} we have
\[
\beta(\gamma) = 1 + a_1^{\gamma} = 1 + \alpha \aatg{1} = 1 + \alpha \left(e^{-\frac{\CG}{2}\Delta_X^*} \aag{}\right)_{1}
\]
by the fact that $\CG$ is diverging as $\log(\gamma^{-1})$ and assumption \eqref{e:hpmeasure} we have
\begin{equation}
\label{e:betagamma}
\beta(\gamma) = 1 + \alpha \left(e^{-\frac{\CG}{2}\Delta_X^*} \aa{}\right)_{1} + \Oo(\alpha \gamma^{\lambda_0} \CG^{n})
\end{equation}
where $\aa{} = (\aa{1},\dots,\aa{2n-1},0,\dots)$ are the coefficients of the limiting polynomial. It is immediate to see that this value coincide with the choice of the critical temperature in \cite{MourratWeber} and in \cite{ShenWeber} (see Remark~\ref{Remark:mourratwebershen}).
\end{remark}

\subsection{Existence of a reference measure $\nug$}
\label{subsec:beta-nug}

We saw in Subsection~\ref{subsec:hermite} that the limiting polynomial comes from a combination of the moments (or cumulants) of the reference measure $\nug$. We will now start from a given renormalized polynomial and find a reference measure producing such a polynomial. It is clear that the limiting process depends only on finitely many moments of the measure $\nug$, and therefore it is clear that the solution, if exists, is not unique.\\
The rest of the article is independent from this subsection.\\
We will now deal with the existence of a reference measure producing a polynomial of form \eqref{e:polyrinormilizeddef}. The problem of find a measure with a given sets of moments is known in the literature as the \virg{moment problem} (see \cite{aheizer1965classical}). For the equation to make sense in the limit, the polynomial has to be renormalized as in \cite{dPD} with a renormalization constant $\CG$ diverging as $\gamma \to 0$. The precise value of $\CG$ will be given in \eqref{e:cgammadef} and it is not important at this stage. The only fact that we will use is that the divergence is logarithmic as $\gamma \to 0$, hence slower that any negative power of $\gamma$. Recall the expansion
\begin{multline*}
\frac{1}{\alpha \delta} \left( \Phi({\hg}(z,t))  - \hg(z,t) \right)\\
=\Xg \left( \frac{1}{\alpha} a_1^{\gamma} + \frac{\delta^2}{\alpha} a_3^{\gamma} |\Xg|^2 + \dots + \frac{\delta^{2n-2}}{\alpha} a_{2n-1}^{\gamma} |\Xg|^{2n-2} \right)(z,t) + \frac{\delta^{2n}}{\alpha} \Oo(|\Xg(z,t)|^{2n+1})\:.
\end{multline*}

The scaling \eqref{e:scaling} entails the fact that the coefficients $a_1^{\gamma},\dots,a_{2n-3}^{\gamma}$ are vanishing at a suitable rate. We know, however, from the form \eqref{e:momgenfun1} that they are identically zero as soon as $\nug$ shares the first $2n-2$ moments with a $m$ dimensional Gaussian random variable with a suitable covariance matrix. In fact \eqref{e:momgenfun1} tells that the coefficient depends on the difference between the cumulants of the measure $\nug$ minus the cumulants of a multivariate Gaussian random variable. \\

\begin{proposition}
\label{prop:existencereferencemeasure}
Let $C_{\gamma}$ any positive sequence of renormalization constants diverging logarithmically as $\gamma \to 0$.\\
Let $b_{\gamma} \to b \in \R^+$ a sequence of positive real numbers.\\
Let now $n \geq 2$ and $\aa{1},\dots,\aa{2n-1} \in \R$ with $\aa{j} = 0$ if $j$ is even.\\
For all $m \in \N$ and $\gamma < \gamma_0$ small enough for any $\aa{2n-1}<0$ small enough in absolute value, there exists a family of rotational invariant measures $\{\mu_{\gamma}\}_{\gamma < \gamma_0}$ over $\R^m$ such that 
\begin{itemize}
\item $\forall \gamma < \gamma_0$, $\mu_{\gamma}$ have all exponential moments
\item The sequence $\mu_{\gamma}$ is weakly convergent as $\gamma \to 0$
\item For $j = 1,\dots,m$ the polynomial obtained in \eqref{e:polyrenorm}, using the measure $\mu_{\gamma}$ and the inverse temperature $b_{\gamma}$ is
\[
\aa{1}X^{(j)} + \aa{3} H(X^{(j)}|X|^{2},C_{\gamma})+ \dots + \aa{2n-1} H(X^{(j)}|X|^{2m-2},C_{\gamma})\;.
\]
\end{itemize}
Moreover it is possible to choose the family $\mu_{\gamma}$ to be supported in the same compact set of $\R^m$.
\end{proposition}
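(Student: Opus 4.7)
The plan is to reverse-engineer a reference measure from the target polynomial. First, I would invert the Hermite renormalization: given the prescribed coefficients $(\aa{1},\dots,\aa{2n-1})$, set $\aatg{} \eqdef e^{-\frac{C_\gamma}{2}\Delta_X^*}\aa{}$. As the computation in Remark~\ref{rmk:Hermitepoly} shows, $\Delta_X^*$ is a raising operator on the space of finite coefficient sequences, so acting on the truncated vector $(\aa{1},\dots,\aa{2n-1},0,\dots)$ the exponential is a finite sum, and each $\aatg{2k-1}$ is a polynomial in $C_\gamma$ with leading term a multiple of $\aa{2n-1}$. Combined with the scaling $a_{2k-1}^\gamma = \gamma^{2n-2k}\aatg{2k-1}$ from \eqref{e:coeffdef1}--\eqref{e:scaling}, this gives explicit targets for the microscopic coefficients, of size $\gamma^{2n-2k}(\log\gamma^{-1})^{n-k}$, i.e.\ vanishing as $\gamma\to 0$ for $k<n$.

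Next I would translate these targets into constraints on the even moments of $\mu_\gamma$. Writing $Z_\gamma(\lambda)=\int e^{\ang{\eta,\lambda}}\mu_\gamma(d\eta)$ and using rotational invariance, $\log Z_\gamma(\lambda)=\phi_\gamma(|\lambda|^2)$, so that $\Phi(\hg)-\hg=2\hg\,\phi'_\gamma(\beta^2|\hg|^2)-\hg$. Expanding in $|\hg|^2$ produces each $a_{2k-1}^\gamma$ as a polynomial in $\beta(\gamma)$ and the even moments $\mu_{2j}\eqdef\int|\eta|^{2j}\mu_\gamma(d\eta)$ for $j\leq k$, with the dependence on $\mu_{2k}$ linear and nondegenerate (the coefficient in front of $\mu_{2k}$ is a nonzero dimensional constant coming from the sphere integrals). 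Together with the constraint $\mu_2=m$, this is a triangular polynomial system which I can solve by induction on $k$: it gives uniquely determined target values $\mu_{2k}^\gamma$ for $k=2,\dots,n$, which converge as $\gamma\to 0$ to the moments of the standard Gaussian $\mathcal{N}(0,I_m)$ when $\aa{1}=\dots=\aa{2n-1}=0$, and to a small perturbation of these Gaussian moments in general, with the size of the perturbation controlled by $|\aa{2n-1}|$ (small) and by the logarithmic divergence of $C_\gamma$.

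The main technical obstacle is realizing these moments by an actual compactly supported, rotationally invariant probability measure, uniformly in $\gamma$. Since $\mu_\gamma$ is determined by the radial distribution of $|\eta|^2$, this reduces to a truncated Hamburger moment problem on $[0,\infty)$ with $n$ prescribed moments. I would use a parametric ansatz, writing $\mu_\gamma$ as a convex combination of uniform measures on spheres $\{|\eta|=r_i\}\subset\R^m$ with $n+1$ radii $r_0<r_1<\dots<r_n$ fixed in a bounded set and $n+1$ weights as free parameters, and then appeal to the implicit function theorem at the Gaussian-matched base point, where explicit weights (the Gauss quadrature weights associated with the chi-squared distribution on $m$ degrees of freedom restricted to those nodes) solve the problem. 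The condition $\aa{2n-1}<0$ with $|\aa{2n-1}|$ small, combined with the logarithmic nature of $C_\gamma$, ensures that the target moment sequence remains strictly in the interior of the truncated Hankel moment cone for all $\gamma<\gamma_0$, so the inverse function is well defined and the weights stay positive. Weak convergence of $\mu_\gamma$ is then immediate from continuity of the construction in the (converging) target moments, uniform compact support yields the exponential moment bound, and the non-uniqueness noted in the statement simply reflects the freedom in choosing the quadrature nodes $r_i$.
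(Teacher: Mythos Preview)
Your argument is correct and follows the same overall arc as the paper's proof: invert the renormalization map to obtain target microscopic coefficients $\aatg{}$, translate these via the Taylor expansion of $\Phi$ into a triangular system for the first $n$ even moments of $\mu_\gamma$, and observe that the solution is a small perturbation of the Gaussian moments. The difference lies in how the truncated moment problem is solved. The paper appeals directly to the classical Hamburger criterion (strict positivity of the Hankel matrix, inherited by perturbation from the Gaussian), which gives the explicit admissibility bound \eqref{e:momentsineq} on $\aa{2n-1}$, and then obtains compact support by choosing a higher moment so that a Hankel determinant vanishes, forcing the measure to be finitely supported. Your route is more constructive: you fix a finite set of radii (Gauss quadrature nodes for the chi-squared law) and solve a linear Vandermonde system for the weights, using continuity to keep the weights positive under perturbation. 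This has the advantage that uniform compact support and weak convergence are immediate from the construction, and no appeal to the abstract moment problem literature is needed; the paper's approach, on the other hand, is shorter and yields a sharper explicit range for $\aa{2n-1}$. One small remark: since your radii are fixed and the map from weights to moments is linear, the implicit function theorem is more than you need---continuity of matrix inversion suffices.
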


The proof is straightforward, but we will provide it for convenience of the reader.

\begin{proof}
It is clear that it suffices to prove the theorem above in the case $b_{\gamma} = 1$ for all $\gamma$.\\
We first describe conditions over the moments (or the cumulants) of the measure.
From \eqref{e:coeffdef1}, \eqref{e:coeffdef2} and \eqref{e:momgenfun1} we have that $\mu_{\gamma}$ has to satisfy, for $\lambda \in \R^m$
\[
\log\left( \int_S e^{\ang{\lambda,\eta}} \mu_{\gamma}(d\eta)\right) = \frac{1}{2}|\lambda|^2 + \sum_{j=1}^n \frac{\alpha}{ \delta^{2j - 2} } \left(e^{-\frac{C_{\gamma}}{2}\Delta_X^*} \aa{}\right)_{2j-1} \frac{1}{2j} |\lambda |^{2j} + \Oo(|\lambda|^{2n+2})
\]
where $e^{-\frac{C_{\gamma}}{2}\Delta_X^*}$ is the inverse of the operator described in Subsection~\ref{subsec:hermite} and the sequence $\aa{}$ is extended to be zero after the $2n-1$-th place. For $\gamma$ small the above polynomial is a perturbation of
\[
\frac{1}{2}|\lambda|^2 + \frac{\aa{2n-1}}{2n} |\lambda |^{2n} + \Oo(|\lambda|^{2n+2})
\]
taking the exponential we see that the $\mu_{\gamma}$ can be seen as a small perturbation of a multivariate Gaussian random variable, up to the $2n-2$-th moment (recall that $\alpha = \delta^{2n-1}$ by \eqref{e:scaling}). Here we are using the fact that $C_{\gamma}$ has a logarithmic divergence in $\gamma$. For $\lambda$ with modulus $1$, the $2n$-th moment is given by
\[
\int_S |\ang{\lambda,\eta}|^{2n} \mu_{\gamma}(d \eta) = \frac{2n!}{n!2^n} + (2n-1)! \aa{2n-1} 
\]
since we aim to produce an isotropic measure, it is sufficient then to prove the existence of a univariate distribution having the first $2n$ even moments equal to
\begin{equation}
\label{e:momentcollection}
m_{2j}^{\gamma} = \begin{cases}
\frac{2j!}{j!2^j} + o(1) & \text{ if } j < n\\
\frac{2n!}{n!2^n}+(2n-1)! \aa{2n-1} + o(1) & \text{ if } j = n\\
\end{cases}
\end{equation}
for $\gamma \to 0$. Since we have the freedom to chose the higher moments, we shall do that later. Such a measure is known to exists (see \cite{aheizer1965classical}) if and only if the moment matrix is positive definite i.e. for all choice complex numbers $\{z_0,z_1,\dots,z_p\}$
\begin{equation}
\label{e:momentcondtion}
\sum_{i_1,i_2 = 0}^p  m_{i_1 + i_2}^{\gamma} \bar{z}_{i_1} z_{i_2} \geq 0	\;.
\end{equation}
It is easy to see the necessity of such condition, since \eqref{e:momentcondtion} is the expectation of the square of a polynomial in the random variable. Condition \eqref{e:momentcondtion} is satisfied for $ m_{j}^{\gamma}$ if it is satisfied with a strict inequality for $m_{j} \eqdef \lim_{\gamma \to 0} m_{j}^{\gamma}$. It is easy to see that for a standard Normal $U \sim \mathcal{N}(0,1)$ \eqref{e:momentcondtion} holds for a strict inequality if $\sum_{i=0}^p |z_i|^2 > 0$
\[
\E \left[\left| z_0 + z_1 U + z_2 U^2 + \dots + z_{p}U^p\right|^2 \right] > 0\;.
\]
It is immediate to conclude that there exists a negative value of $\aa{2n-1}$ and $\gamma$ small enough such that the above inequality is satisfied for the collection of moments given by \eqref{e:momentcollection}.\\
The values of $\aa{2n-1}$ that guarantee condition \eqref{e:momentcondtion} are given by the inequality
\begin{equation}
\label{e:momentsineq}
\aa{2n-1}  > - \frac{D_{2n}}{(2n-1)! D_{2n-1} }\;.
\end{equation}
If we denote with $D_p \eqdef \det\left( m^G_{i_1 + i_2} \right)_{i_1,i_2 =0}^p > 0$ the determinant of the moment matrix of the Gaussian random variable $U$.\\
In order to complete the proof it is sufficient to complete the list of the moments with arbitrary values satisfying \ref{e:momentcondtion}. It is always possible to find such a sequence of moments because each moment is asked to satisfy an inequality similar to \ref{e:momentsineq} which admits trivially a solution.\\
This implies the existence of a measure with the prescribed moments.\\
We might chose, in particular, at some $p > n$ to have the left hand side of \eqref{e:momentcondtion} equal to $0$ for all $\gamma$. This implies that the random variable annihilates a certain nonnegative polynomial and therefore it is supported on the set of the real zeros of such polynomial, which is a finite set, hence a bounded set. This proves the last claim.
\end{proof}

\subsection{Stopping time for the dynamic}
\label{subsec:stopping time}

As announced in Remark~\ref{remark:workingunderstoppingtime}, we are now ready to define a stopping time $\taun$ for the macroscopic dynamic defined above. Fix a positive $\nu > 0$ and a value $\nn > 1$, and let
\begin{equation}
\label{e:stoppingtime}
\taun = \inf \left\lbrace t \geq 0| \Norm{\Cc^{-\nu}}{\Xg(t,\cdot)} \geq \nn \right\rbrace\:.
\end{equation}
Following \cite{MourratWeber} we will not work directly with the process introduced in Subsection~\ref{subsec:rescaling}, but with a process whose jump distribution is given by (for macroscopic coordinates $(x,s) \in \Le \times [0,T]$)
\begin{equation}
\label{e:p_mdefin}
\pgm{x, s^- }(d\eta_x)  = \begin{cases} \pg{\epsilon^{-1} x, \alpha^{-1} s^- }(d\eta_x) & \text{ if } s \leq \alpha^{-1} \taun \\
\nug(d\eta_x) & \text{ if } s > \alpha^{-1}\taun
\end{cases}
\end{equation}
In particular, $\pgm{x, s^- }(d\eta_x)$ doesn't depend on the current configuration when $s > \alpha^{-1}\taun$ and for general $\nn \geq 0$, $k > 0$ and $s > 0$, from assumption \eqref{e:hpexponentialmoment}
\begin{equation}
\label{e:boundonmomentsofnug}
\int_S |\eta|^k \pgm{z,s^-}(d \eta) \leq C(k,\nn)\;.
\end{equation}
The process with jump distribution \eqref{e:p_mdefin} coincides with the process defined in Section \ref{sec:model-theorems} up to the stopping time, after which it still follows a Glauber dynamic, but with respect to the Gibbs measure at infinite temperature ($\beta = 0$).

\begin{remark}
\label{remark:stoopingtime}
The reason behind the introduction of the stopping time is to have an \virg{a priori} control of the norm of the fluctuation field because the quadratic variation of the linear part $\Zg$ depends on the whole fluctuation field $\Xg$. It can be proven, following the same proof as \cite [Theorem~2.1]{MourratWeber}, that for all arbitrarily small $\zeta > 0$, there exists $\nn > 1$ such that
\[
\limsup_{\gamma \to 0} \P[\taun \leq T] \leq \zeta
\]
that allows us \virg{a posteriori} to use the dynamic defined in \eqref{e:p_mdefin}.\\ 
A consequence of this fact is that in order to prove the convergence in distribution for $\Xg$ it is sufficient to show that, $\forall \nn > 1$, and all continuous bounded $F:\D([0,T],\Cc^{-\nu})$
\[
\limsup_{\gamma \to 0}\E\left[|F(\Xg) - F(X)|\mathbf{1}_{\{\taun > T\}} \right] = 0\;.
\]
This means that we can always assume to work with the stopped dynamic, and we will do so.\\

\end{remark}

For the new process, define $\Qm$ as in \eqref{e:ppowersdef}
\begin{multline}
\label{e:ppowersdefm}
\Qm^{i,j}(s^-,z) = \int_{S} (\eta^{(i)} - \sigma_{\epsilon^{-1} z}^{(i)}(\alpha^{-1} s^-))(\eta^{(j)} - \sigma_{\epsilon^{-1} z}^{(j)}(\alpha^{-1} s^-)) \pgm{z,s^-}(d \eta) \\
= \delta_{i,j} + \sigma_{\epsilon^{-1} z}^{(i)}(\alpha^{-1} s^-)\sigma_{\epsilon^{-1} z}^{(j)}(\alpha^{-1} s^-) + err_{\gamma}(\nn,\sigma,z,s^-)
\end{multline}
where
\[
|err_{\gamma}(\nn,\sigma,z,s^-)| \leq C(\nn)\gamma^{1-\nu}\begin{cases}
 |\sigma_{\epsilon^{-1} z}^{(i)}(\alpha^{-1} s^-)| + |\sigma_{\epsilon^{-1} z}^{(j)}(\alpha^{-1} s^-)| &\text{ if } s < \alpha^{-1}\taun\\
0 &\text{ otherwise}
\end{cases}
\]
Since we will always use the stopped dynamic, in order to keep the notation cleaner, for the rest of the paper we will abuse the notation omitting from $\sigma$, $\hg$ and all the other fields the dependence on $\nn$.
We have the following

\begin{proposition}
\label{prop:boundonrate}
For all $\lambda > 0$ and $q > 1$ there exists $C= C(q,\nn,\lambda,T)$, depending on the constant in \eqref{e:hp2bis}, such that, for some $\gamma_0>0$
\[
\sup_{0 < \gamma \leq \gamma_0}\sup_{0 \leq s \leq T}\sup_{z \in \Le} s^{\lambda} \E\left[\left| err_{\gamma}(\nn,\sigma,z,s^-)\right|^{q}\right]\leq C \left(\gamma^{q(1-\nu)} + \alpha^{\lambda} \right)
\]
where $s,T$ are macroscopic times. Moreover, there exists  $C = C(\nn,q,T)$ such that
\[
\sup_{0 < \gamma \leq \gamma_0}\sup_{0 \leq s \leq T}\sup_{z \in \Le}\E\left[|\Qm^{i,j}( s, z)|^q\right] \leq C\;.
\]
\end{proposition}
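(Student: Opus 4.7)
The plan is to reduce both estimates to a single uniform moment bound on the spin, $\sup_{z\in\Le}\sup_{t\ge 0}\E[|\sigma_z(t)|^q]\le C(q,\nn)$, and then to handle the $s^\lambda$ factor by balancing the short-time contribution of the initial condition against the long-time equilibrium behaviour. Starting from the pointwise estimate displayed just before the statement,
\[
|err_\gamma(\nn,\sigma,z,s^-)|^q \le C(q,\nn)\gamma^{q(1-\nu)}\bigl(|\sigma^{(i)}_{\epsilon^{-1}z}(\alpha^{-1}s^-)|^q + |\sigma^{(j)}_{\epsilon^{-1}z}(\alpha^{-1}s^-)|^q\bigr),
\]
and by expanding $\Qm^{i,j}$ according to \eqref{e:ppowersdefm},
\[
|\Qm^{i,j}(s^-,z)|^q \le C_q\bigl(1 + |\sigma^{(i)}_{\epsilon^{-1}z}(\alpha^{-1}s^-)|^{2q} + |\sigma^{(j)}_{\epsilon^{-1}z}(\alpha^{-1}s^-)|^{2q} + |err_\gamma|^q\bigr).
\]
So both claims will follow once the individual spin moments are controlled.

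The key moment bound comes from the product Poisson structure: the site $z$ carries an independent rate-$1$ clock (in microscopic time), and at each ring the spin is replaced by a fresh draw from $\pgm{z,\cdot}$. By \eqref{e:boundonmomentsofnug}, all moments of this draw are bounded by $C(k,\nn)$ uniformly. Conditioning on whether the clock at $z$ has rung before microscopic time $t$, and using \eqref{e:hp2bis} on the no-ring event, one obtains
\[
\E[|\sigma_z(t)|^q] \le e^{-t}\E[|\sigma_z(0)|^q] + (1-e^{-t})C(q,\nn) \le C_q e^{-t} + C(q,\nn),
\]
uniformly in $z,t,\gamma$. Feeding this (with $q$ replaced by $2q$) into the expansion of $\Qm^{i,j}$ yields the second claim at once.

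For the first claim the same decomposition is used, but the factor $e^{-\alpha^{-1}s}$ is retained rather than bounded by one:
\[
s^\lambda\,\E[|err_\gamma|^q] \le C\gamma^{q(1-\nu)} s^\lambda\bigl(e^{-\alpha^{-1}s}C_q + C(q,\nn)\bigr).
\]
The short-time initial-condition piece is then handled by the elementary calibration $\sup_{s>0} s^\lambda e^{-\alpha^{-1}s} = (\lambda/e)^\lambda \alpha^\lambda$, producing the $\alpha^\lambda$ term on the right-hand side, while the remainder is at most $T^\lambda C(q,\nn)\gamma^{q(1-\nu)}$. Adding these gives exactly the bound $C(q,\nn,\lambda,T)(\gamma^{q(1-\nu)}+\alpha^\lambda)$.

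The one step that demands genuine care, rather than real difficulty, is verifying that $\int_S|\eta|^k\pgm{z,s^-}(d\eta)\le C(k,\nn)$ holds uniformly on both sides of the stopping threshold $s=\taun$. For $s>\taun$ the measure is literally $\nug$ and \eqref{e:hpexponentialmoment} applies directly; for $s\le\taun$ one must check that the tilt parameter $\beta\hg$ in \eqref{e:pdef} stays bounded in terms of $\nn$, which follows because $\hg=\delta\Xg$ with $\delta=\gamma$ and $\Xg$ is controlled in $\Cc^{-\nu}$ by the very definition of $\taun$ in \eqref{e:stoppingtime}. Once this uniform tilt control is in place, the argument above is a routine conditioning-plus-Poisson-clock computation.
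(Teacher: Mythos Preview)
Your proposal is correct and follows essentially the same approach as the paper: bound the Radon--Nikodym derivative of $\pgm{z,s^-}$ with respect to $\nug$ via the stopping-time control $|\beta\hg|\lesssim \gamma^{1-\nu}\nn$, then condition on whether the Poisson clock at site $z$ has rung before macroscopic time $s$, using \eqref{e:hp2bis} on the no-ring event and \eqref{e:boundonmomentsofnug} on the complement, and finally calibrate $s^{\lambda}e^{-\alpha^{-1}s}\lesssim \alpha^{\lambda}$. The paper's proof is terser but identical in substance.
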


\begin{proof}
It is sufficient to notice that the Radon-Nikodym derivative 
\[
e^{- 2 \gamma^{1-\nu} \nn |\eta|} \leq d\pgm{z, s^- }/d\nug(d\eta) \leq e^{2 \gamma^{1-\nu} \nn |\eta|}
\]
and that the measure $\nug$ has exponential moments by \eqref{e:hpexponentialmoment}. Then
\begin{multline*}
r^{\lambda}\E\left[ \left|\sigma_{\epsilon^{-1} y}^{(i)}(\alpha^{-1} r)\right|^q \right] \\
\leq C r^{\lambda}\P(T_0 \geq r) + r^{\lambda}\P(T_0 < r) \int_S \left|\eta^{(i)}\right|^q \pgm{y,  r^- }(d\eta) \leq C r^{\lambda} e^{-\alpha^{-1}r} + C
\end{multline*}
for $0 \leq r \leq T$, $y \in \Lambda_N$ and where $T_0$ it the first (macroscopic) time  that the spin in $y$ jumps. To go from the first line to the second one we used the assumption \eqref{e:hp2bis} on the initial condition.
\end{proof}

\subsection{Limiting SPDE}

\label{subsec:limitingspde}

In this section we define the solution to the limiting equation \eqref{e:Phi2n2}, which is a multidimensional version of the $\Phi^{2n}_2$ equation, at which the discrete process introduced in Section~\ref{sec:model-theorems} will converge to.\\
For $j = 1,\dots,m$, each $X^{(j)}$ turns out to be a process with values in the Besov space of negative regularity $\Cc^{-\nu}(\T^2)$ for any $\nu > 0$, where $\Cc^{\alpha}$ is a separable version of the usual Besov space $\mathcal{B}^{\alpha}_{\infty,\infty}$, defined as the closure of the space of smooth functions under the norm
\[
\Norm{\Cc^{\alpha}}{g} \eqdef \sup_{k \geq -1} 2^{\alpha k }\Norm{L^{\infty}(\T^2)}{\delta_{k} g}
\]
where $\delta_{k}$ is the $k$-th Paley-Littlewood projection and $g: \T^2 \to \R$ is a smooth function (see \cite{MourratWeberGlobal} for a collection of results about Besov spaces and \cite[App.~A]{MourratWeber} for the detail of the construction in our case). The multivalued stochastic quantization equation in two dimension is given by
\begin{equation}
\label{e:cap2limitingspde}
d X^{(j)}(\cdot,t) = \Delta X^{(j)}(\cdot,t) dt + \pp^{(j)}(X)(\cdot,t) dt + \sqrt{2} dW^{(j)}(t)
\end{equation}
with initial conditions $\Xn \in \Cc^{-\nu}(\T^2;\R^m)$. The processes $W^{(j)}$ are $m$ independent white noises on $\T^2$ and $\pp^{(j)}$ are odd renormalized polynomials of degree $2n-1$ of the form \ref{e:polylimitdef} satisfying assumption \eqref{e:hppoly}.\\ 
The existence and uniqueness theory behind \eqref{e:cap2limitingspde} follows from the work of \cite{dPD} and \cite{tsatsoulis2016spectral}.
The analysis of equation has already been performed in our context by \cite{MourratWeber} for an odd polynomial of degree $3$ and $m=1$ and in \cite{ShenWeber} in case of an odd polynomial of any degree and $m=1$, the extension to the multidimensional case it's straightforward.\\
In order to fix some notations and definitions useful in Section~\ref{sec:nonlinear} we will briefly summarize the results for \eqref{e:cap2limitingspde}, following the treatment in \cite{ShenWeber}.\\
In this context we take the definition of the Fourier transform:  for a function $Y:\Le \to \C$ define for $\omega \in \{-N,\dots,N\}^2$
\begin{equation}
\label{e:Fouriertransform}
\hat{Y}(\omega)  \eqdef \sum_{x \in \Le} \epsilon^2 Y(x) e^{-\pi i x \cdot \omega}\;.
\end{equation}
With this definition we have the following inversion formula, for $x \in \Le$ 
\begin{equation}
\label{e:Fourierinversion}
Y(x) = \frac{1}{4} \sum_{\omega \in \{-N,\dots,N\}^2} \hat{Y}(\omega) e^{\pi i x \cdot \omega}\;.
\end{equation}
The discrete Fourier transform \eqref{e:Fouriertransform} is also defined for a general field $Y:\T^2 \to \C$ over the whole torus and in particular one can define the approximation to $Y$ denoted with $\Ex(Y)$ as
\begin{equation}
\label{e:extentionoperator}
\Ex(Y)(x) \eqdef \frac{1}{4} \sum_{\omega \in \{-N,\dots,N\}^2} \hat{Y}(\omega) e^{\pi i x \cdot \omega}
\end{equation}
for $x \in \T^2$. In particular $Y(x)$ coincides with $\Ex(Y)(x)$ for $x \in \Le$. Moreover, for $Y: \Le \to \R$ we define
\begin{equation}
\label{e:highlowestensiondef}
Y^{high} \eqdef \sum_{2^k \geq \frac{3}{8} \frac{\epsilon^{-1}}{2n-1}} \delta_k Y \qquad Y^{low} \eqdef \sum_{2^k < \frac{3}{8} \frac{\epsilon^{-1}}{2n-1}} \delta_k Y
\end{equation}
as processes over the continuous torus $\T^2 \to \R$, and analogous definitions can be given in the vector valued processes. The threshold $\frac{3}{8}\frac{1}{2n-1}\epsilon^{-1}$ has been chosen in such a way that the operation of taking the $2n-1$ power of $Y^{low}$ commutes with the extension operator.\\
Let $W_{\epsilon}^{(j)}$ be a smooth approximation of the white noise, given by truncating the Fourier modes with frequencies $|\omega| > \epsilon^{-1}$.
Assume $Z_{\epsilon}^{(j)}$ for $j=1...m$ to be the smooth solution of the heat equation on the torus $\T^2$ with Gaussian noise $W_{\epsilon}^{(j)}$
\begin{equation}
\label{e:limitlinearapproxepsilon}
\begin{cases}
\partial_t Z_{\epsilon}^{(j)}(t) &= \Delta Z_{\epsilon}^{(j)}(t) + \sqrt{2}d W_{\epsilon}^{(j)}(t)\\
Z_{\epsilon}^{(j)}(t) &= 0
\end{cases}
\end{equation}
In particular $Z_{\epsilon}$ is a martingale. For a positive integer $k \in \N$, define the Wick power $Z^{(j):k:}_{\epsilon}(t,x)\eqdef H_{k}(Z^{(j)}_{\epsilon}(t,x),\mathfrak{c}_{\epsilon}(t))$ where
\[
\mathfrak{c}_{\epsilon}(t) = \E[ (Z(t,0)^{(j)})^2 ] = \frac{t}{2} +  \sum_{0 < |\omega| \leq \epsilon^{-1}:\omega \in \Z^2} \frac{1}{4 \pi^2 |\omega|^2} \left(1 - e^{- 2 t \pi^2 |\omega|^2} \right)
\]
and extend the above definition to a general multiindex $\kb \in \N^m$ using the definition of multidimensional Hermite polynomial in the case of independent components (\ref{cap2multdimorthermite})
\[
Z^{:\kb:}_{\epsilon}(t,x) = H_{\kb}\left( ( Z^{(1)}_{\epsilon}(t,x) , \dots , Z^{(m)}_{\epsilon}(t,x)),\mathfrak{c}_{\epsilon}(t)I_m ) \right) = \prod_{j = 1}^m H_{k_j}(Z^{(j)}_{\epsilon}(t,x),\mathfrak{c}_{\epsilon}(t))\;.
\]
An important remark that will be the reason for some different definitions in the following sections is that the Wick powers $Z^{:\kb:}_{\epsilon}$ do have Fourier modes of frequencies of order $|\omega| > \epsilon^{-1}$. This will be important when dealing with the nonlinear process.\\

We will now state a result which is a multidimensional dynamical version of \cite[Lemma~3.2]{dPD} and \cite[Prop~3.1]{MourratWeber}. The proof of the result follows essentially from \cite{MourratWeber}, with the use of the independence of the components.
\begin{proposition}
\label{cap2linearconvcontinuous}
For $T > 0$ , $\nu > 0$ and $\kb \in \N^m$, the stochastic processes $Z_{\epsilon}^{:\kb:}$ converges a.s. and in any stochastic $L^p$ space in the metric of $C \left([0,T],\Cc^{-\nu}\right)$.\\
We will refer to this limit with $Z^{:\kb:}(t,\cdot)$.
\end{proposition}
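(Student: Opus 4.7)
The strategy is to reduce to the scalar case of \cite[Prop.~3.1]{MourratWeber} by exploiting two facts: the components $Z^{(1)}_\epsilon,\dots,Z^{(m)}_\epsilon$ are mutually independent, and for every $(t,x)$ the random variable $Z^{:\kb:}_\epsilon(t,x)$ belongs to the inhomogeneous Wiener chaos of order $|\kb| = k_1 + \dots + k_m$. Together, these two facts mean that every second-moment estimate on $Z^{:\kb:}_\epsilon$ reduces to a second-moment estimate for a scalar Wick power of order $|\kb|$, which can then be lifted to arbitrary $L^p$ norms by Gaussian hypercontractivity.

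\textbf{Step 1 (covariance).} By independence across components and the orthogonality of Hermite polynomials against their underlying Gaussian,
\begin{equation*}
\E \bigl[ Z^{:\kb:}_\epsilon(t,x) \, Z^{:\kb:}_\epsilon(s,y) \bigr] = \kb! \, G_\epsilon(t,x;s,y)^{|\kb|},
\end{equation*}
where $G_\epsilon(t,x;s,y) = \E[Z^{(1)}_\epsilon(t,x) Z^{(1)}_\epsilon(s,y)]$ is the common covariance of a single component, with explicit Fourier representation truncated to $0 < |\omega| \leq \epsilon^{-1}$. This is, up to the combinatorial factor $\kb!$, identical to the covariance analyzed in the scalar Wick-power problem with exponent $|\kb|$.

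\textbf{Step 2 (Besov estimates and hypercontractivity).} Using \eqref{e:Fouriertransform}--\eqref{e:extentionoperator} and the covariance formula above, a direct computation in frequency space (exactly as in \cite{MourratWeber}) gives, for every $\nu > 0$,
\begin{equation*}
\sup_{\epsilon > 0} \sup_{t \in [0,T]} \sup_{x \in \T^2} \E\bigl[ |\delta_q \Ex(Z^{:\kb:}_\epsilon)(t,x)|^2 \bigr] \lesssim 2^{q\nu},
\end{equation*}
uniformly in the dyadic index $q \geq -1$. Since $\delta_q \Ex(Z^{:\kb:}_\epsilon)(t,x)$ remains in the $|\kb|$-th chaos, Nelson's hypercontractive estimate upgrades this to
\begin{equation*}
\E\bigl[ |\delta_q \Ex(Z^{:\kb:}_\epsilon)(t,x)|^p \bigr] \lesssim 2^{pq\nu/2}
\end{equation*}
for every $p \geq 2$, with a constant depending only on $p$ and $|\kb|$. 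Applied to the difference $Z^{:\kb:}_\epsilon - Z^{:\kb:}_{\epsilon'}$, the same scheme produces an extra factor decaying as a positive power of $\epsilon \vee \epsilon'$ on each dyadic block.

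\textbf{Step 3 (time regularity and conclusion).} A further covariance computation, exploiting the parabolic scaling in the Fourier representation of $G_\epsilon$, controls the temporal increments
\begin{equation*}
\E\bigl[ |\delta_q(Z^{:\kb:}_\epsilon(t,x) - Z^{:\kb:}_\epsilon(s,x))|^2 \bigr] \lesssim |t-s|^{\ka} \, 2^{q(2\ka + \nu)}
\end{equation*}
for some $\ka > 0$, again uniformly in $\epsilon$. Combining Steps 2--3 via the Kolmogorov-type argument of \cite[App.~A]{MourratWeber} (applied simultaneously in $t$, $x$ and $\epsilon$) shows that $(Z^{:\kb:}_\epsilon)_\epsilon$ is Cauchy in $L^p(\Omega; C([0,T], \Cc^{-\nu}))$ for every $p \geq 2$ and $\nu > 0$; the limit is the process $Z^{:\kb:}$, and almost sure convergence follows by Borel--Cantelli along a geometric subsequence in $\epsilon$. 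The only technical subtlety, already present in the scalar case, is that the Paley--Littlewood projections and the extension operator $\Ex$ do not commute at frequencies exceeding $\epsilon^{-1}$; this is addressed by the high/low decomposition in \eqref{e:highlowestensiondef}. The multidimensional aspect contributes only the combinatorial factor $\kb!$ in the covariance and does not affect the scaling analysis, so no genuinely new obstacle appears.
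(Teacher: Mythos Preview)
Your proposal is correct and follows essentially the same approach as the paper: the paper simply states that the result follows from \cite[Prop.~3.1]{MourratWeber} together with the independence of the components, and your write-up is precisely a spelled-out version of that argument---reducing the multivariate covariance $\E[Z_\epsilon^{:\kb:}(t,x)Z_\epsilon^{:\kb:}(s,y)] = \kb!\,G_\epsilon^{|\kb|}$ to the scalar Wick-power computation via independence, then invoking hypercontractivity and the Kolmogorov-type criterion exactly as in \cite{MourratWeber}.
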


We will need moreover \cite[Prop.~2.1]{dPD} in the form of the Besov inequality, which we will restate below for the sake of convenience.
\begin{proposition}
\label{prop:productbesov}
Let $a,b > 0$ with $b-a > 0$. Assume $A$ to be in $\Cc^{-a}(\T^d)$ and $B$ to be in $\Cc^b(\T^d)$. Then the pointwise product $A B $ (defined on a dense subspace of $\Cc^{-a}(\T^d)$) can be extended to a bilinear map random variable $\Cc^{-a}(\T^d) \times \Cc^{b}(\T^d) \to \Cc^{-a}(\T^d)$ and
\begin{equation}
\Norm{\Cc^{-a}(\T^d)}{ AB } \lesssim \Norm{\Cc^{-a}(\T^d)}{ A }  \Norm{\Cc^{b}(\T^d)}{ B } \;.
\end{equation}
\end{proposition}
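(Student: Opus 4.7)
I would proceed via Bony's paraproduct decomposition. For smooth functions $A,B$ (on which the product is classical) write
\[
AB = \pi_{\prec}(A,B) + \pi_{\prec}(B,A) + \pi_{\circ}(A,B),
\]
where $\pi_{\prec}(F,G) = \sum_{j \le k-2} \delta_j F \,\delta_k G$ and $\pi_{\circ}(F,G) = \sum_{|j-k| \le 1} \delta_j F \,\delta_k G$, with $\delta_j$ the Paley-Littlewood blocks from the statement of the paper. The single summand $\delta_j F \,\delta_k G$ has Fourier support contained in a ball/annulus of radius $\lesssim 2^{\max(j,k)}$, which is the essential feature allowing a block-by-block estimate at the level of the Besov norm.

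Using $\Norm{L^\infty}{\delta_j A} \lesssim 2^{ja}\Norm{\Cc^{-a}}{A}$ and $\Norm{L^\infty}{\delta_k B} \lesssim 2^{-kb}\Norm{\Cc^b}{B}$, I would bound each piece separately. For the favourable paraproduct $\pi_{\prec}(B,A)$ the low-frequency factor is $\delta_j B$, and the geometric sum $\sum_{j\le k-2} 2^{-jb}$ converges uniformly in $k$ (this is where $b>0$ is used), so each Paley-Littlewood block of this piece is of size $2^{ka}$, giving $\pi_{\prec}(B,A) \in \Cc^{-a}$ with the right bound. For $\pi_{\prec}(A,B)$, the geometric sum $\sum_{j\le k-2} 2^{ja}$ is dominated by its top term $2^{ka}$ (using $a > 0$), and combining with $\Norm{L^\infty}{\delta_k B} \lesssim 2^{-kb}$ yields $\pi_{\prec}(A,B) \in \Cc^{b-a}$. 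For the resonant term I would apply $\delta_l$ and exploit the Fourier localisation: only indices with $\max(j,k) \gtrsim l$ contribute, and $\Norm{L^\infty}{\delta_j A\,\delta_k B} \lesssim 2^{k(a-b)}\Norm{\Cc^{-a}}{A}\Norm{\Cc^{b}}{B}$ for $|j-k|\le 1$; summing the geometric series over $k \gtrsim l$ requires exactly the hypothesis $b - a > 0$ and yields $\pi_{\circ}(A,B) \in \Cc^{b-a}$. Since $b > 0$ we have the embedding $\Cc^{b-a} \hookrightarrow \Cc^{-a}$, and summing the three contributions gives $\Norm{\Cc^{-a}}{AB} \lesssim \Norm{\Cc^{-a}}{A}\Norm{\Cc^b}{B}$.

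Finally, I would extend to the general case by density. Smooth functions are dense in the separable Besov spaces $\Cc^{-a}$ and $\Cc^b$ (this is precisely why the separable version is used in the paper), and the bilinear bound above shows that the pointwise product, defined for pairs of smooth functions, is jointly continuous on the product of smooth subspaces. It therefore extends uniquely by continuity to a bilinear map $\Cc^{-a}\times\Cc^b \to \Cc^{-a}$ satisfying the same estimate, independently of the approximating sequences.

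The main obstacle is the resonant term $\pi_{\circ}(A,B)$, which is the only place where the hypothesis $b - a > 0$ is strictly needed; this matches the standard fact that two distributions whose regularities sum to a nonpositive number cannot be multiplied by paraproduct techniques. The two paraproduct estimates are essentially book-keeping once the dyadic decomposition is in place.
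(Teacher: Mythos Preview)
Your argument via Bony's paraproduct decomposition is correct and is the standard route to this estimate. Note, however, that the paper does not actually prove Proposition~\ref{prop:productbesov}: it is stated ``for the sake of convenience'' and attributed to \cite[Prop.~2.1]{dPD}, so there is no in-paper proof to compare against. Your sketch is essentially the argument one finds in the cited reference (and in standard treatments such as Bahouri--Chemin--Danchin or \cite{MourratWeberGlobal}), so you are aligned with the intended source.
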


The solution of the linear equation \eqref{e:limitlinearapproxepsilon} in $\T^2$ started with $\Xn$ initial conditions can be written as
\begin{equation}
\tilde{Z}_{\epsilon}^{(j)}(t) \eqdef Y^{(j)}(t) + Z_{\epsilon}^{(j)}(t)\;.
\end{equation}
The process $\tilde{Z}^{(j)}(t,\cdot)$ enjoys, by Proposition~\ref{prop:productbesov} and the properties of the heat semigroup \eqref{e:heatsgregimpr}
\begin{equation}
\sup_{0 \leq t \leq T} t^{(\betareg + \nu)\frac{|\kb|}{2}}\Norm{\Cc^{-\nu}}{\tilde{Z}^{:\kb:}(t,\cdot)} \leq C^* 
\label{e:cap2linprocregular}
\end{equation}

where $C^* = C^*(T,\Norm{\Cc^{-\nu}}{X_0},\betareg,\nu,n,\Norm{\Cc^{-\nu}}{Z^{:\kb:}} \text{ for } |\kb|\leq 2n-1)$. Let
\[
\mathfrak{c}_{\epsilon} = \frac{1}{4}\sum_{0 < |\omega| \leq \epsilon^{-1}} \frac{1}{ \pi^2 |\omega|^2} 
\]
and define the difference
\[
A_{\epsilon}(t) \eqdef \Ce - \Ce(t) = - \frac{t}{2 } + \frac{1}{4}\sum_{0 < |\omega| \leq \epsilon^{-1}} \frac{e^{- 2 t \pi^2 |\omega|^2}}{ \pi^2 |\omega|^2}  \qquad A(t) \eqdef \lim_{\epsilon \to 0} A_{\epsilon}(t)
\]
With $A(t) \sim \log(t^{-1})$ for $t \to 0$ and $|A(t)| \sim t$ as $t \to \infty$.\\
We are now ready to describe the notion of solution to equation \eqref{e:cap2limitingspde}, first defined in \cite{dPD}. We say that $X$ solves \eqref{e:cap2limitingspde} if $X(t,\cdot) =  \tilde{Z}(t,\cdot) + V(t,\cdot)$ and the process $V$, solves the PDE
\begin{equation}
\label{e:cap2PDEforV}
\begin{cases}
\partial_t V^{(j)}(\cdot,t) &= \Delta V^{(j)}(\cdot,t) + \overline{\Psi}^{(j)}\left(t, (\tilde{Z}^{:\kb:})_{|\kb| \leq 2n-1} \right)\left(V_{\gamma} (\cdot,t)\right) \\
V^{(j)}(\cdot,t) &= 0
\end{cases}
\end{equation}
For
\begin{equation}
\label{e:barPsidef}
\overline{\Psi}^{(j)}\left(t, (\tilde{Z}^{:\kb:})_{|\kb| \leq 2n-1} \right)\left(V_{\gamma} (\cdot,t)\right)  = \pp^{(j)}(\tilde{Z} (\cdot,t) + V(\cdot,t))
\end{equation}
where
\begin{equation}
\pp^{(j)}(\tilde{Z} (\cdot,t) + V(\cdot,t)) \eqdef \sum_{ |\mathbf{b}| + |\mathbf{a}|\leq 2n-1} b_{\mathbf{a},\mathbf{b}}^{(j)}(t) V^{\mathbf{a}}(\cdot,t)\tilde{Z}^{:\mathbf{b}:}(\cdot,t)
\end{equation}
for some coefficients with
\begin{equation}
\label{e:epseqboundoncoeff}
|b_{\mathbf{a},\mathbf{b}}^{(j)}(t)| \lesssim |A(t)|^{\frac{2n-1 - |\mathbf{a} + \mathbf{b}|}{2}}\;.
\end{equation}
 We recall that the products between $\tilde{Z}^{:\mathbf{b}:}(\cdot,t)$ and $V^{\mathbf{a}}(\cdot,t)$ are well defined thanks to Proposition~\ref{prop:productbesov} and the fact that $V(t,\cdot) \in \Cc^{\alpha}(\T^2,\R^m)$ for any $\alpha < 2$. In particular \eqref{e:cap2PDEforV} is a PDE that depends on a given realization of the linear process $\tilde{Z}$ and its Wick powers.\\
The next theorem completes the existence and uniqueness theory behind the limiting equation. 
\begin{theorem}
\label{thm:existuniqlimit}
For $ 0 < \nu < \frac{2}{2n-1}$ small enough and initial data $\Xn \in \Cc^{-\nu}(\T^2,\R^m)$. For a realization of
\[
\mathbf{z}_{\kb} \in L^{\infty} ([0,T];\Cc^{-\nu}(\T^2)) \qquad\text{ for } |\kb| \leq 2n-1\;,
\]
let $\mathcal{S}_T$ the solution map that associates to $( \mathbf{z}_{\kb})_{|\kb|\leq 2n-1}$ the solution $V$ to the PDE \eqref{e:cap2PDEforV}.\\
The solution map exists, it is unique and it is Lipschitz continuous for all $\nu,\kappa > 0$ with $\kappa > (2n-1)\nu$ sufficiently small as
\begin{align*}
\mathcal{S}_T :  [L^{\infty} ([0,T];\Cc^{-\nu}(\T^2))]^{n^*} \mapsto &\Cc \left( [0,T] , \Cc^{2 - \nu - \kappa}(\T^2,\R^m)\right)\\
\left\lbrace \mathbf{z}_{\kb} \right\rbrace_{|\kb|\leq 2n-1} \to & V
\end{align*}
\end{theorem}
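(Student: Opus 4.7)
My plan is to follow the classical da Prato--Debussche approach and treat \eqref{e:cap2PDEforV} as a deterministic PDE depending on the enhanced noise $(\mathbf{z}_{\kb})_{|\kb|\leq 2n-1}$. Writing the equation in mild form,
\[
V^{(j)}(t)=\int_0^t e^{(t-s)\Delta}\,\overline{\Psi}^{(j)}\!\left(s,(\mathbf{z}_{\kb})_{|\kb|\leq 2n-1}\right)\!\left(V(s)\right)ds,
\]
I would run a Banach fixed point argument in a ball of $C([0,T^*],\Cc^{2-\nu-\kappa}(\T^2,\R^m))$ for a possibly small $T^*>0$. The key inputs are the smoothing of the heat semigroup $e^{t\Delta}:\Cc^{-\nu}\to \Cc^{2-\nu-\kappa}$ with the integrable time singularity $t^{-(2-\kappa)/2}$, together with the Besov multiplication inequality of Proposition~\ref{prop:productbesov}. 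Provided that $\kappa>(2n-1)\nu$ is small, the condition $2-\nu-\kappa>\nu$ is satisfied, so each monomial $V^{\mathbf{a}}\mathbf{z}_{\mathbf{b}}$ with $|\mathbf{a}|+|\mathbf{b}|\leq 2n-1$ makes sense as an element of $\Cc^{-\nu}$ and its norm is bounded by $\Norm{\Cc^{2-\nu-\kappa}}{V}^{|\mathbf{a}|}\Norm{\Cc^{-\nu}}{\mathbf{z}_{\mathbf{b}}}$; the coefficient bound \eqref{e:epseqboundoncoeff} contributes an $|A(t)|^{(2n-1-|\mathbf{a}+\mathbf{b}|)/2}$ factor which is locally integrable since $A(t)\sim\log(t^{-1})$ near zero. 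Standard manipulations show that for $T^*$ depending on $C^*$ and on $\sup_{|\kb|\leq 2n-1}\Norm{L^\infty_t\Cc^{-\nu}}{\mathbf{z}_\kb}$ the map is a contraction, yielding a unique local mild solution.

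To upgrade local to global, and to control $T^*$ only through the data of the theorem, I would derive an a~priori estimate that uses in an essential way the sign condition \eqref{e:hppoly}, i.e.\ $\aa{2n-1}<0$. Concretely, I would test the PDE for $V$ componentwise against $|V|^{2p-2}V^{(j)}$ for a large enough integer $p$ and sum over $j$. The pure-$V$ term produces $\aa{2n-1}\int |V|^{2n-2+2p}$ which has the correct sign and dominates, while the mixed terms $V^{\mathbf{a}}\mathbf{z}_{\mathbf{b}}$ with $|\mathbf{b}|\geq 1$ are controlled by interpolation, Proposition~\ref{prop:productbesov}, Young's inequality, and the $\Cc^{-\nu}$--norm of $\mathbf{z}_{\mathbf{b}}$. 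This yields a Gronwall-type bound on $\Norm{L^{2p}}{V(t)}$ uniformly on $[0,T]$, which via the Schauder estimates for the heat semigroup can be bootstrapped to a uniform bound of $\Norm{\Cc^{2-\nu-\kappa}}{V(t)}$ depending only on $T$, $\Norm{\Cc^{-\nu}}{\Xn}$ and $\sup_{|\kb|\leq 2n-1}\Norm{L^\infty_t\Cc^{-\nu}}{\mathbf{z}_{\kb}}$.

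Finally, the Lipschitz continuity of $\mathcal{S}_T$ is obtained by running the same mild-formulation estimates on the difference of two solutions $V_1,V_2$ driven by two enhanced noises $(\mathbf{z}^{(1)}_{\kb})$, $(\mathbf{z}^{(2)}_{\kb})$. The polynomial structure of $\overline\Psi$ makes $V_1^{\mathbf{a}}\mathbf{z}^{(1)}_{\mathbf{b}}-V_2^{\mathbf{a}}\mathbf{z}^{(2)}_{\mathbf{b}}$ split, by a telescoping sum, into terms linear either in $V_1-V_2$ or in $\mathbf{z}^{(1)}_{\mathbf{b}}-\mathbf{z}^{(2)}_{\mathbf{b}}$; the a priori bound of the previous paragraph controls the prefactors, and Gronwall's lemma closes the argument in $C([0,T],\Cc^{2-\nu-\kappa})$.

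The genuinely delicate step is the global existence (second paragraph): one has to extract from the sign of $\aa{2n-1}$ a coercive estimate that survives the non-linear coupling with the rough drivers $\tilde Z^{:\kb:}$ at full strength, despite the fact that these are merely $\Cc^{-\nu}$-valued. The $L^p$-monotonicity argument works precisely because of the higher-order dissipation produced by the top Wick monomial, and because the lower-order Wick powers produce only logarithmically singular coefficients via \eqref{e:epseqboundoncoeff}; without the assumption \eqref{e:hppoly} this step would fail and solutions could a priori blow up in finite time.
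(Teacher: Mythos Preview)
Your sketch is correct and follows the same approach that the paper adopts: the paper's own proof simply refers to \cite{MourratWeber,ShenWeber,MourratWeberGlobal} (and \cite{tsatsoulis2016spectral}) and notes that the vector-valued extension is straightforward, while the global-in-time part is discussed immediately after the theorem exactly as you do, via $L^p$ testing against $V^{(j)p-1}$ using the sign condition \eqref{e:hppoly}. Your outline is in fact more detailed than what the paper provides, but the strategy (local fixed point via heat smoothing plus Proposition~\ref{prop:productbesov}, global a priori bound from the dissipative top-order term, Lipschitz dependence by telescoping differences) is precisely the content of those references.
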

\begin{proof}
The same proof in \cite{MourratWeber,ShenWeber,MourratWeberGlobal} applies to the vector valued problem, see also \cite[Sec.~3]{tsatsoulis2016spectral} for some bounds which are independent on the initial conditions. 

\end{proof}
We now spend few words about the existence of the solution for all times. With a general polynomial, the process is expected to have a blowup in finite time, but in fact, the Assumption~\ref{e:hppoly} guarantees the well posedness of the solution for all times. The proof of this fact for $m=1$ is presented in \cite[Sec. 6]{MourratWeberGlobal}, and it consists in deriving $L^p$-bounds for the process $V^{(j)}$ testing $V^{(j)p-1}$ with \ref{e:cap2PDEforV} via the assumption on the leading coefficient of the polynomial given in \ref{e:hppoly}. The application to our case is straightforward.

We are now ready to state the main result of this article, which will be proved in Section~\ref{sec:nonlinear}.

\begin{theorem}
\label{thm:maintheorem}
Let $\Xg$ the multidimensional process defined from the Glauber dynamic as in Section~\ref{sec:model-theorems}.\\
For $\nu>0$ small enough, let the reference measure $\nug$ and the initial condition satisfy the assumptions \eqref{e:hp1}, \eqref{e:hp2bis}, \eqref{e:hpexponentialmoment}, \eqref{e:hpmeasure}, \eqref{e:hppoly} in Section~\ref{subsec:assumptionsnug}.\\
Then the process $\Xg$ converges in distribution in $\mathcal{D}\left([0,T];\Cc^{-\nu} \right)$ to the solution, in the sense of Section~\ref{subsec:limitingspde}, $X$ of the SPDE in \eqref{e:cap2limitingspde}.
\end{theorem}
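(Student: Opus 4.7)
The overall strategy follows the roadmap of \cite{MourratWeber,ShenWeber}. By Remark~\ref{remark:stoopingtime} it suffices to prove convergence under the dynamic stopped at $\taun$, so throughout I would assume the a priori bound $\Norm{\Cc^{-\nu}}{\Xg(t,\cdot)} \leq \nn$. The plan is to decompose $\Xg = \tZg + \Yg$, where $\tZg$ is a discrete analogue of the linear process $\tilde{Z}$ of Subsection~\ref{subsec:limitingspde}, and then show separately that the linear object together with its Wick powers converges, and that the nonlinear remainder converges via the Lipschitz continuity of the solution map $\mathcal{S}_T$ from Theorem~\ref{thm:existuniqlimit}.

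More precisely, take $\tZg$ to solve the linear discrete stochastic heat equation $d\tZg = \frac{\epsilon^2}{\gamma^2 \alpha}\Delta_\gamma \tZg\,dt + d\Mg$ started from $\delta^{-1}\hg(\cdot,0)$, with $\Mg$ the martingale from \eqref{e:Xequation1}. Sections~\ref{sec:linear} and \ref{sec:tightness-linear} then establish the convergence of $\tZg$ and, crucially, of the renormalized discrete Wick powers $\tZG{\kb}$ (defined via the multivariate Hermite polynomial of covariance $\CG I_m$) to the continuous Wick powers $\tilde{Z}^{:\kb:}$ of Proposition~\ref{cap2linearconvcontinuous}, jointly for all $|\kb| \leq 2n-1$. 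The Burkholder--Davis--Gundy inequality, the quadratic-variation formula \eqref{e:quadvarMg}, and the second bound in Proposition~\ref{prop:boundonrate} provide the necessary moment estimates; the renormalization constant $\CG$ is chosen so that, under the scaling \eqref{e:scaling}, it matches the logarithmic divergence of $\mathfrak{c}_\epsilon$. This matching is precisely what fixes $\beta(\gamma)$ in \eqref{e:betagamma} and the coefficients $\aag{2k-1}$ in \eqref{e:coeffdef2}.

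For the nonlinear remainder $\Yg = \Xg - \tZg$, the key manipulation is the Hermite expansion from Remark~\ref{rmk:Hermitepoly} combined with \eqref{e:hermitesum}: each component $\pptg^{(j)}(\Xg) = \pptg^{(j)}(\tZg + \Yg)$ can be rewritten as a finite sum of monomials $\Yg^{\mathbf{a}} \tZG{\mathbf{b}}$ with coefficients $b^{\gamma,(j)}_{\mathbf{a},\mathbf{b}}$ which, by Assumption~\eqref{e:hpmeasure} and the choice of $\CG$, converge to those appearing in \eqref{e:barPsidef}. The resulting equation for $\Yg$ is a discrete approximation of the PDE \eqref{e:cap2PDEforV}; by the stopping-time a priori bound together with parabolic smoothing, $\Yg$ takes values in a space $\Cc^{2-\nu-\kappa}$ in which the products with the Wick powers are tamed by Proposition~\ref{prop:productbesov}. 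Combining the joint convergence of the Wick powers with the Lipschitz continuity of $\mathcal{S}_T$ then gives $\Yg \to V$, hence $\Xg = \tZg + \Yg \to \tilde{Z} + V = X$ in distribution.

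The main obstacle lies in the nonlinear analysis of Section~\ref{sec:nonlinear}: one has to control simultaneously the replacement of $\Delta_\gamma$ by $\Delta$, the mollifying convolution $\Kg \aste \cdot$, the higher-order remainder $\Eg$ bounded by \eqref{e:errorterm} (for which the stopping time is essential, since the factor $|\Xg|^{2n+1}$ is not bounded a priori), and the discrepancy $\aag{2k-1} - \aa{2k-1}$ controlled by Assumption~\eqref{e:hpmeasure}. Each of these contributions must be shown to vanish in a topology compatible with the bilinear estimate of Proposition~\ref{prop:productbesov}, and this has to be carried out \emph{for every monomial} of the expanded polynomial of degree up to $2n-1$, which makes the bookkeeping substantially heavier than in the cases treated in \cite{MourratWeber,ShenWeber}. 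The sign assumption $\aa{2n-1} < 0$ of \eqref{e:hppoly} enters at the end to guarantee that $\Yg$ does not blow up before time $T$, via the $L^p$-testing argument in the spirit of \cite[Sec.~6]{MourratWeberGlobal}.
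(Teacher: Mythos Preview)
Your proposal follows the same architecture as the paper: reduction to the stopped dynamic via Remark~\ref{remark:stoopingtime}, convergence of the discrete Wick powers in Sections~\ref{sec:linear}--\ref{sec:tightness-linear}, Da~Prato--Debussche splitting, and use of the solution map $\mathcal{S}_T$. The paper itself records that ``the proof of the main theorem follows exactly as in \cite[Theorem~2.1]{MourratWeber}, where the only bound needed is provided by Proposition~\ref{prop:propvdiffstat}.''

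There is, however, one place where your framing is imprecise and where the paper is more careful. You write that ``combining the joint convergence of the Wick powers with the Lipschitz continuity of $\mathcal{S}_T$ then gives $\Yg \to V$''. But your $\Yg$ (the paper's $v_\gamma$ of \eqref{e:vdef}) does \emph{not} lie in the range of $\mathcal{S}_T$: it solves a \emph{discrete} fixed-point equation with $\Pg{t}$, $\Kg\aste\cdot$ and the Taylor remainder $\Eg$, not the continuous PDE \eqref{e:cap2PDEforV}. Lipschitz continuity of $\mathcal{S}_T$ in its input data therefore says nothing about $\Yg$ directly. The paper closes this gap by introducing the intermediate process $\bar X_\gamma$ of \eqref{e:approximateX}, whose remainder $\bar v_\gamma$ \emph{is} exactly the output of $\mathcal{S}_T$ applied to the discrete Wick powers; Lipschitz continuity plus Theorem~\ref{thm:Zsconvergence} then gives $\bar v_\gamma \to V$, and the real work is the Gronwall-type comparison $\Norm{\Cc^{1/2}}{v_\gamma - \bar v_\gamma}\to 0$ of Proposition~\ref{prop:propvdiffstat}. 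The error controls you list in your last paragraph (replacement of $\Delta_\gamma$ by $\Delta$, of $\Kg\aste\cdot$, of $\Eg$, and of the coefficients via \eqref{e:hpmeasure}) are precisely the ingredients of that proposition; they should be organised into the integral inequality \eqref{e:propvdiffstat} rather than treated as parallel obstacles. Two minor corrections in the same spirit: the comparison is run in $\Cc^{1/2}$, not in $\Cc^{2-\nu-\kappa}$, since this is all Proposition~\ref{prop:productbesov} requires and the discrete smoothing estimates of Proposition~\ref{prop:boundsonP} are cleanest there; and the sign condition \eqref{e:hppoly} is used to ensure global existence of the \emph{continuous} object $V$ (hence $\bar v_\gamma$) in Theorem~\ref{thm:existuniqlimit}, whereas the discrete remainder $v_\gamma$ is kept finite by the stopping time $\taun$, not by the sign of $\aa{2n-1}$.
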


Recall that by Remark~\ref{remark:stoopingtime}, it is sufficient to work under the condition $\taun > T$.\\
The proof of the main theorem follows exactly as in \cite[Theorem~2.1]{MourratWeber}, where the only bound needed is provided by Proposition~\ref{prop:propvdiffstat}. 

Theorem~\ref{thm:maintheorem} implies, for instance the following corollary
\begin{corollary}
\label{cor:mvectormodel}
Consider the $m$-vector model defined in Remark~\ref{Remark:mourratwebershen}. Suppose that the law at time zero satisfies $\E\Norm{\Cc^{-\nu}}{\Xng} < \infty$. Then the Glauber dynamic converges to the solution of \eqref{e:mvectorlimit}.
\end{corollary}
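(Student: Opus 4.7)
The plan is to apply Theorem~\ref{thm:maintheorem} with $n=2$ to a suitably rescaled $m$-vector model and then translate the resulting convergence back to the original model. Since the uniform measure on the unit sphere $S^{m-1}$ has second moment $1$, and Theorem~\ref{thm:maintheorem} requires $\int_S|\eta|^2\nug(d\eta)=m$, my first step will be to pass to the equivalent model with $\bar\sigma_x=\sqrt{m}\,\sigma_x$, whose ($\gamma$-independent) reference measure $\bar\nu_\gamma$ is uniform on $\sqrt{m}\,S^{m-1}$. The rescaled fluctuation field is $\bar X_\gamma=\sqrt{m}\,\Xg$, and the Glauber dynamics are in bijective correspondence. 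I take $n=2$ because \eqref{e:mvectorlimit} has only a cubic nonlinearity.

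Next I would verify the assumptions of the main theorem for this rescaled model. Compact support of $\bar\nu_\gamma$ makes \eqref{e:hpexponentialmoment} and \eqref{e:hp2bis} immediate. For \eqref{e:hpmeasure} I would expand \eqref{e:TaylorPhimicro} through the cubic term: by isotropy and the $\mathrm{Beta}(1/2,(m-1)/2)$ law of $\zeta_1^2$ for $\zeta$ uniform on $S^{m-1}$, one obtains $M_2:=\int \eta_1^2\,\bar\nu_\gamma=1$ and $M_4:=\int\eta_1^4\,\bar\nu_\gamma=3m/(m+2)$, whence
\[
a_1^\gamma=\beta(\gamma)-1,\qquad a_3^\gamma=\frac{\beta(\gamma)^3}{6}(M_4-3M_2^2)=-\frac{\beta(\gamma)^3}{m+2}.
\]
Tuning $\beta(\gamma)$ as in \eqref{e:betagamma} and feeding the result through \eqref{e:coeffdef2} yields $\aag{1}\to\aa{1}=0$ and $\aag{3}\to\aa{3}=-1/(m+2)$ at polynomial rate in $\gamma$, so \eqref{e:hpmeasure} holds. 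Assumption \eqref{e:hppoly} then reduces to $-1/(m+2)<0$, which is trivial.

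Theorem~\ref{thm:maintheorem} would thereby give convergence in $\mathcal{D}([0,T];\Cc^{-\nu})$ of $\bar X_\gamma$ to the solution $\tilde X$ of
\[
\partial_t \tilde X=\Delta\tilde X-\frac{1}{m+2}:|\tilde X|^2\tilde X:+\xi.
\]
Setting $X:=\tilde X/\sqrt{m}$ and substituting $\tilde X=\sqrt{m}\,X$ into this SPDE, then dividing by $\sqrt{m}$, reproduces \eqref{e:mvectorlimit}: the coefficient $m/(m+2)$ comes from $(\sqrt{m})^3/[(m+2)\sqrt{m}]$ and the noise prefactor $1/\sqrt{m}$ from the division. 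Since $\Xg=\bar X_\gamma/\sqrt{m}$, this is exactly the claimed limit for the original $m$-vector fluctuation field.

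The step I expect to be the most delicate, though it introduces no new ideas beyond those already developed for Theorem~\ref{thm:maintheorem}, is confirming that the higher even moments of $\bar\nu_\gamma$ (which are nonzero and $\gamma$-independent, generating nonzero $a_5^\gamma,a_7^\gamma,\ldots$) do not survive the limit. Under the scaling $\alpha=\delta^2=\gamma^2$ appropriate to $n=2$, each such term contributes a drift of order $\gamma^{2j-2}a_{2j+1}^\gamma$ for $j\ge 2$, and is absorbed into the error bound \eqref{e:errorterm} exactly as in \cite{MourratWeber}.
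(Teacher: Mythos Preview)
Your proposal is correct and follows essentially the same route as the paper: rescale the spins by $\sqrt{m}$ so that the reference measure satisfies $\int|\eta|^2\,\nug(d\eta)=m$, apply Theorem~\ref{thm:maintheorem} with $n=2$ to obtain convergence of $\sqrt{m}\,\Xg$ to the solution of $\partial_t X'=\Delta X'-\frac{1}{m+2}:|X'|^2X':+\xi$, and then undo the scaling. Your explicit moment computation via the $\mathrm{Beta}(1/2,(m-1)/2)$ law and your remark that the higher-order terms $a_{2j+1}^\gamma$, $j\ge 2$, are absorbed into the error \eqref{e:errorterm} under the $n=2$ scaling make the argument more transparent than the paper's sketch, but introduce no new ideas.
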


\begin{proof}
It is easy to see that the assumptions in Subsection~\ref{subsec:assumptionsnug} are satisfied (except the condition $\int_S |\eta|^2 \nug(d\eta) = 1$). We can then apply the theorem to $\sigma_x'(t) = \sqrt{m}\sigma_x(t)$ with the new invariant measure $\beta' = \frac{1}{m}\beta$. It is easy to see that the calculation in \eqref{e:momgenfun1} yields
\[
(\beta' m) \hg'(x,t) - \frac{1}{m+2} (\beta' m)^3 |\hg'(x,t)|^2\hg'(x,t) + \Oo(|\hg'(x,t)|^5)\:.
\]
If $\beta' m = 1 + \gamma^2 \cg + o(\gamma^2)$, then Theorem~\ref{thm:maintheorem} implies that $\Xg' := \delta^{-1}\hg' = \delta^{-1} \sqrt{m} \hg = \sqrt{m} \Xg$ converges to
\[
\partial_t X' = \Delta X' - \frac{1}{m+2} :|X'|^2 X': + \xi
\]
and therefore the original field converges to the solution of \eqref{e:mvectorlimit}.
\end{proof}

\section{The linearized process}
\label{sec:linear}

In order to prove convergence in law for the Glauber dynamic defined in Section~\ref{sec:model-theorems}, we will introduce the linearized dynamic and start proving convergence in law of the linearized dynamic to the solution of the multivariate heat equation.
The strategy that we will be using is the same as \cite{MourratWeber}. We will first show tightness of the linear process and then characterizing the law with the martingale problem satisfied by the heat equation.\\
In this section the linearized dynamic $\Zg$ is presented: in order to prove the tightness of the laws of the different processes as $\gamma \to 0$ we introduce the approximation $\Rg$ to $\Zg$. In order to prove that the limiting law satisfies the martingale problem, instead of introducing a stopping time, as in \cite{MourratWeber}, we use some a priori bounds on the nonlinear dynamic.\\

\subsection{Wick powers of the rescaled field}
\label{subsec:Wick-powers}

We will write the solution $\Xg$ on $\Le$ as
\begin{equation}
\label{e:Xduhamel}
\begin{split}
\Xg(\cdot,t) &= \Pg{t} \Xng + \int_0^t \Pg{t-s} \Kg \aste \left( \ppg(\Xg)(\cdot,s) + \Eg(\cdot,s)\right) ds \\
&+\int_0^t \Pg{t-s} d\Mg(\cdot,s)
\end{split}
\end{equation}
where $\ppg$ has been defined in \eqref{e:polydef} and $\Xng = \delta^{-1}\hg(\cdot,0)$ is the initial condition. We will now denote by $\Zg(x,t)$ the mild solution
\begin{equation}
\label{e:Zdef}
\Zg(x,t) \eqdef \int_{r=0}^t \Pg{t-r}d\Mg(x,s)
\end{equation}
of the approximation of the stochastic heat equation on $\Le$
\begin{equation}
\label{e:SHE:zeroinit}
\begin{cases}
d\Zg(x,t) &= \Dg \Zg(x,t) + d\Mg(x,t)\\
\Zg(x,0) &= 0
\end{cases}
\end{equation}
And we will extend $\Zg$ to the whole torus $\T^2$, by considering the trigonometric polynomial of degree $N$ that coincides with $\Zg$ on $\Le$.
Following \cite{MourratWeber,ShenWeber} we introduce a martingale approximation of $\Zg$, defined for $s\leq t$ as
\begin{equation}
\label{e:Rdef}
\Rg(x,s) = \int_{[0,s)} \Pg{t-r} d\Mg(x,r)\;.
\end{equation}

From its definition $\Rg$ is a martingale for $0 \leq s \leq t$ and $\lim_{s \to t} \Rg(x,s) =\Zg(x,t)$ in $\Cc^{-\kappa}$ for any $\kappa>0$.  We will now define recursively the higher renormalized powers of $\Rg$. Such a definition might not seem intuitive, but it has the advantage of producing automatically a martingale.\\
We recall that $\Rg = (\Rg^{(1)}, \dots, \Rg^{(m)})$ and every renormalized power is indexed by $\kb = (k_1,\dots,k_m) \in \N^m$. We then call the degree of the multiindex $\kb$ the quantity $|\kb| = \sum_{i=1}^m k_i$. 
To be consistent with the notations, if $|\kb| = 1$ we simply consider $\Rg^{\kb} \eqdef \Rg^{(i)}$ if $\kb$ is nonzero only in the $i$-th position.

We then define, for $x \in \Le$ and $0 \leq s \leq t$
\begin{equation}
\label{e:Rpowersdef}
\Rg^{:\kb:}(x,s) = \Rg^{:k_1,k_2,\dots,k_m:}(x,s) \eqdef \sum_{i=1}^m k_i \int_{[0,s)} \Rg^{:k_1,\dots,k_i-1,\dots,k_m:}(x,r^{-}) d \Rg^{(i)}(x,r)\;.
\end{equation}
Where the left limit $\Rg^{:k_1,\dots,k_i-1,\dots,k_n}(x,r^{-})$ guarantees that the above quantity is a martingale for all $\kb$.\\
The above definition has the drawback that it is defined only on $\Le$. In order to extend it to the whole torus $\T^2$, it turns out to be more convenient to work with another definition of $\Rg^{:\kb:}$ via the Fourier series
\begin{equation}
\label{e:Rpowersextension}
\hRg^{:\kb:}(\omega,s) \eqdef  \sum_{i=1}^m k_i \int_{[0,s)} \frac{1}{4}\sum_{\omega' \in \Z^2} \hRg^{:k_1,\dots,k_i-1,\dots,k_m:}(\omega- \omega',r^{-}) d \hRg^{(i)}(\omega',r)\;.
\end{equation}
It is immediate to verify that \eqref{e:Rpowersextension} defines an extension to $\T^2$ of \eqref{e:Rpowersdef} and it is a Fourier polynomial of degree $4|\kb|\epsilon^{-2}$.\\
For multiindex $\kb$ and $x \in \T^2$, $0 \leq t$ define
\begin{equation}
\label{e:Zpowerdef}
\Zg^{:\kb:}(x,t) \eqdef \lim_{s \nearrow t}\Rg^{:\kb:}(x,s)\;.
\end{equation}

As the notation suggests, the quantities $\Zg^{:\kb:}(\cdot,t)$ are going to be an approximation for the Wick powers of the solution of the linearized process. This relation will be made more precise in Proposition~\ref{prop:cap2boundonHermiteerror} in the next section.\\

The rest of the section is devoted to showing that the processes $\Rg^{:\kb:}(\cdot,t)$ belongs to $\Cc([0,T],\Cc^{-\nu})$ for any small $\nu > 0$, which is the content of Proposition~\ref{prop:cap2boundsonR}.\\
Using \eqref{e:quadvarMg} the quadratic covariation of  \eqref{e:Rdef}  is given by
\begin{gather*}
\ang{R_{\gamma,t}^{(i)}(z_1,\cdot) , R_{\gamma,t}^{(j)}( z_2,\cdot)}_s \\
= \int_{[0,s)}\sum_{y_1,y_2 \in \Le} \epsilon^{2d} P^{\gamma}_{t-r}(z_1 - y_1) P^{\gamma}_{t-r}(z_2 - y_2) d \ang{M^{(i)}_{\gamma,\cdot}(y_1) , M^{(j)}_{\gamma,\cdot}(y_2)}_r \\
 \label{e:cap2quadrvarlin2}
= \sum_{z \in \Le } \epsilon^d \int_{[0,s)} P^{\gamma}_{t-r} \Kg(z-y)^2 \Qm^{i,j}(r,z) dr\;.
\end{gather*}

By Proposition~\ref{prop:boundonrate}, the expectation of \eqref{e:cap2quadrvarlin2} is bounded by
\begin{equation}
\label{e:Rquadexpbound}
\begin{split}
\left(\E \left|\ang{R_{\gamma,t}^{(i)}(y,\cdot) , R_{\gamma,t}^{(j)}(y,\cdot)}_s \right|^q\right)^{1/q} \leq C(\nn,q) \sum_{z \in \Le } \epsilon^2 \int_{[0,s)} P^{\gamma}_{t-r} \Kg(z-y)^2 dr 
\end{split}
\end{equation}
and therefore, for $s < t$, $R_{\gamma,t}(y,s)$ is a true martingale.\\

We expect to get the orthogonality of the martingales for $i \neq j$ in the limit as $\gamma \to 0$.

The next estimate is needed in Proposition~\ref{prop:cap2boundsonR} to control the norm of the iterated integrals of the process $\Rg^{:\kb:}(\cdot,s)$. This is essentially lemma 4.1 of \cite{MourratWeber} for a particular choice of the kernels. We will provide a proof of it since it is a key estimate, even though the proof follows closely the one in \cite{MourratWeber}, with the only difference that in the proof an H\"older inequality has been used to deal with the fact that the spins in our model are not bounded uniformly by $1$. 
Furthermore, the result is not stated in its more general form in order to avoid the introduction of notations that are not going to be used in the rest of the paper.\\
For the next proposition we will use the notation $\Rg^{:\kb:}(\varphi,s)$ to denote the $L^2(\Le)$ scalar product between $\Rg^{:\kb:}(\cdot,s)$ and a test function $\varphi$.
\begin{proposition}
\label{prop:RroughBounds}
Let $\varphi : \T^2 \to \R$ be a smooth test function, let $p > 2$ and $ \kappa > 0$, then there exists a constant $C = C(\kb,p,\nn,\kappa)$, such that
\begin{equation*}
\begin{split}
 &\left(\E \sup_{0 \leq s \leq t} |\Rg^{:\kb:}(\varphi,s) |^p \right)^{\frac{2}{p}} \\
& \leq C \sum_{i=1}^m k_i \int_{r=0}^t \sum_{y \in \Le} \epsilon^2 \E \left[\left|\Rg^{:k_1,\dots,k_i-1,\dots,k_m:}(\Pg{t-r}\Kg(\cdot-y)\varphi,r^{-})  \right|^{p+\kappa} \right]^{\frac{2}{p+\kappa}} dr \\
&+ C (\delta^{-1}\epsilon^{2} )^{2 - \kappa} \sum_{i=1}^m k_i\ \E \left[\sup_{r \leq t}\sup_{y \in \Le} \left|  \Rg^{:k_1,\dots,k_i-1,\dots,k_m:}(\Pg{t-r}\Kg(\cdot-y)\varphi,r^{-}) \right|^{p + \kappa} \right]^{\frac{2}{p+\kappa}}
 \end{split}
\end{equation*}
and reiterating the above formula we obtain
\begin{equation}
\label{e:Rpowertested}
\begin{split}
&\left( \E \sup_{0 \leq s \leq t} | \Rg^{:\kb:}(\varphi,s) |^p \right)^{\frac{2}{p}}  \\
 &\leq C \int_{r_1 = 0}^t  \int_{r_2 = 0}^{r_1} \cdots \int_{r_{|\kb|}=0}^{r_{|\kb|-1}} \sum_{\bar{y} \in (\Le)^{|\kb|}} \epsilon^{2|\kb|} \ang{\varphi, F_{|\kb|}^t(\bar{y},\bar{r})}_{L^2(\Le)}^2 d\bar{r} + \mathbf{err}
 \end{split}
\end{equation}
where 
\begin{equation}
\label{e:Fdefin}
F_l^t(y_1,\dots,y_l,r_1,\cdots,r_l)(x) = \prod_{i=1}^l \Pg{t-{r_i}}\Kg(x-y_i)
\end{equation}
and the error term is given by
\begin{equation}
\label{e:Rpowererror}
\begin{split}
\mathbf{err} &= C (\delta^{-1}\epsilon^2)^{2 - \kappa} \sum_{l=1,\dots,| \kb|}  \int_{r_1 = 0}^t  \int_{r_2 = 0}^{r_1} \cdots \int_{r_{l-1}=0}^{r_{l-2}} dr_1 \cdots dr_{l-1} \sum_{y_1,\dots,y_{l-1} \in \Le} \epsilon^{2(l-1)} \times \\
& \times \sup_{\mathbf{a}\in \N^m : |\mathbf{a}|=l}\E \left[\sup_{\substack{ r_{l}< r_{l-1} \\ y_l \in \Le}}\left| \Rg^{:\kb  - \mathbf{a}:}\left(\varphi F_{l}^t(y_1,\dots,y_{l},r_1,\dots,r_{l}),r_{l}\right)\right|^{p + l\kappa} \right]^{\frac{2}{p + l\kappa}} 
\end{split}\:.
\end{equation}
\end{proposition}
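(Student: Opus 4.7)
The plan is to proceed by a jump-BDG inequality applied to the martingale $s \mapsto \Rg^{:\kb:}(\varphi,s)$, and then to iterate the resulting one-step bound $|\kb|$ times. Note that by the recursive definition \eqref{e:Rpowersdef}, $\Rg^{:\kb:}(\varphi, s)$ is, after pairing with the test function, a sum over $i=1,\dots,m$ of stochastic integrals with integrand $\sum_x \epsilon^2 \varphi(x) \Rg^{:\kb-e_i:}(x,r^-)$ against $d\Rg^{(i)}(x,r)$; this is what powers the induction.

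The first step is to apply the Burkholder--Davis--Gundy inequality in a form valid for càdlàg jump martingales,
\begin{equation*}
\bigl(\E \sup_{s \leq t}|\Rg^{:\kb:}(\varphi,s)|^p\bigr)^{2/p} \lesssim \bigl(\E \langle \Rg^{:\kb:}(\varphi,\cdot)\rangle_t^{p/2}\bigr)^{2/p} + \bigl(\E \sup_{r \leq t}|\Delta \Rg^{:\kb:}(\varphi,r)|^p\bigr)^{2/p},
\end{equation*}
so that the first summand yields the main term and the second yields $\mathbf{err}$. For the predictable variation, one combines \eqref{e:Rpowersdef} with the covariation formula inherited from \eqref{e:quadvarMg} (which produces a product $\Pg{t-s}\Kg(x_1-l)\,\Pg{t-s}\Kg(x_2-l)$ and a factor $\Qm^{i,j}(s,l)$), then applies Minkowski's integral inequality to move the $L^{p/2}$-norm inside the $s$-integral, and finally a three-factor Hölder with exponents $(p+\kappa,\,p+\kappa,\,q)$, where $1/q = 2/p - 2/(p+\kappa)$. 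The first two factors absorb $\Rg^{:\kb-e_i:}(\varphi\,\Pg{t-r}\Kg(\cdot-l),r^-)$ and $\Rg^{:\kb-e_j:}(\varphi\,\Pg{t-r}\Kg(\cdot-l),r^-)$; the last absorbs $\Qm^{i,j}$, whose $L^q$-moments are uniformly bounded by Proposition~\ref{prop:boundonrate}. A Cauchy--Schwarz on the two $\Rg$-factors and summation/integration in $(l,r)$ delivers the first term on the right-hand side of the claim.

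For the jump term, each flip at a site $z$ at macroscopic time $r$ causes $\Rg^{(i)}(x,r)$ to jump by $\delta^{-1}\epsilon^2\,\Pg{t-r}\Kg(x-z)\,(\eta^{(i)} - \sigma^{(i)}_{\epsilon^{-1}z}(r^-))$, and hence, after pairing with $\varphi$, $\Rg^{:\kb:}(\varphi,\cdot)$ jumps by
\begin{equation*}
\delta^{-1}\epsilon^2 \sum_i k_i\, \Rg^{:\kb-e_i:}\bigl(\varphi\,\Pg{t-r}\Kg(\cdot-z),r^-\bigr)\,(\eta^{(i)} - \sigma^{(i)}_{\epsilon^{-1}z}(r^-)).
\end{equation*}
The $|\eta - \sigma|$ factor is controlled in any $L^q$ by \eqref{e:boundonmomentsofnug} together with the pointwise moment hypothesis \eqref{e:hp2bis} --- this is the point where the argument departs from \cite{MourratWeber}, since we allow unbounded spins --- and an $L^{p}$-to-$L^{p+\kappa}$ Hölder interpolation is then used to trade a $\kappa$-power of $\delta^{-1}\epsilon^2$ for the stronger moment bound on $\Rg^{:\kb-e_i:}$ appearing in $\mathbf{err}$, producing the stated prefactor $(\delta^{-1}\epsilon^2)^{2-\kappa}$.

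Finally, one iterates. Applying the one-step bound with $\varphi$ replaced by $\varphi\,\Pg{t-r_1}\Kg(\cdot-y_1)$ to $\Rg^{:\kb-e_i:}(\cdot, r_1^-)$, and then again to $\Rg^{:\kb-e_i-e_j:}$, and so on, after $|\kb|$ repetitions the lowest-order term becomes $\Rg^{:0:}=1$ and the main term collapses to the iterated integral with kernel $F^t_{|\kb|}$ defined in \eqref{e:Fdefin}. At each of the $|\kb|$ recursion levels one copy of the jump contribution is generated, accumulating into \eqref{e:Rpowererror}. The main obstacle is the bookkeeping of Hölder exponents across the recursion: each level raises the required moment from $p+l\kappa$ to $p+(l+1)\kappa$, so one must choose $\kappa>0$ small enough (relative to $|\kb|$, $p$, and the exponent $q$ needed in Proposition~\ref{prop:boundonrate}) that all moments remain finite uniformly in $\gamma$ while keeping the final exponent below the threshold controlled by assumptions \eqref{e:hpexponentialmoment} and \eqref{e:hp2bis}.
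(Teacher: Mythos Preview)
Your proposal is correct and follows essentially the same route as the paper: BDG for the jump martingale, Minkowski to pull the $L^{p/2}$-norm inside the time integral, H\"older to peel off the $\Qm^{i,j}$ factor (controlled via Proposition~\ref{prop:boundonrate}), a separate jump estimate, and then iteration on $|\kb|$.

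One point where your sketch is slightly thin is the control of the jump contribution. You write that $|\eta-\sigma|$ is controlled in any $L^q$ by \eqref{e:boundonmomentsofnug} and \eqref{e:hp2bis}, but what is actually needed is a bound on $\E\bigl[\sup_{r\le t}\sup_{z\in\Le}|\Delta_r\sigma_{\epsilon^{-1}z}|^{q}\bigr]$, and a pointwise moment bound does not give this directly. The paper handles this by the standard sup-to-sum trick: bound the supremum over all jumps by the sum over all jumps, use that the total number of jumps in $[0,t]\times\Le$ is Poisson with mean of order $\epsilon^{-2}\alpha^{-1}$, and then choose the H\"older exponent $q$ large enough that the resulting factor $(\epsilon^{-2}\alpha^{-1})^{1/q}$ is absorbed into the $(\delta^{-1}\epsilon^2)^{-\kappa}$ loss. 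This is exactly the mechanism behind the ``trade a $\kappa$-power'' you allude to, so your outline is compatible with it, but the sup-to-sum step should be made explicit.
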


\begin{proof}
It is easy to see that the above formula holds for $|\kb|=1$ and any $p>2$. We then use the Burkholder-Davis-Gundy inequality and the induction on $|\kb|$ to prove that it holds also for any $\kb \in \N^m$.\\
From the recursive formula \eqref{e:Rpowersdef} we compute the quadratic variation of
\begin{gather*}
\Rg^{:k_1,k_2,\dots,k_m:}(\varphi ,s) = \sum_{i=1}^m k_i \int_{[0,s)} \sum_{x \in \Le}\epsilon^2 \varphi(x)\Rg^{:k_1,\dots,k_i-1,\dots,k_m:}(x,r^{-}) d \Rg^{(i)}(x,r)\;.
\end{gather*}
In order to apply the Burkholder-Davis-Gundy inequality we have to estimate the quadratic variation of the process and the size of the jumps.\\
The quadratic variation of the process is then
\begin{gather*}
\ang{\sum_{x \in \Le} \epsilon^2\varphi(x) \Rg^{:k_1,k_2,\dots,k_m:}(x,\cdot)}_s\leq C(\kb)\sum_i k_i^2 \int_{[0,s)} \sum_{x,y \in \Le}\epsilon^4 \varphi(x)\varphi(y )\times \\
\times \Rg^{:k_1,\dots,k_i-1,\dots,k_m:}(x,r^{-})\Rg^{:k_1,\dots,k_i-1,\dots,k_m:}(y,r^{-}) d \ang{\Rg^{(i)}(x,\cdot),\Rg^{(i)}(y,\cdot)}_r\\
\lesssim \sum_i k_i^2 \int_{[0,s)} \sum_{z \in \Le} \epsilon^2 |\Rg^{:k_1,\dots,k_i-1,\dots,k_m:}(\varphi(\cdot) \Pg{t-r}\Kg(\cdot-z),r^{-})|^2 \Qm^{i,i}(r^-,z)dr\;.
\end{gather*}
We define the jump of a cadlag process at time $r \in \R$, as $\Delta_r  \Rg^{:k_1,k_2,\dots,k_m:}(\varphi,r)$ and it is given by 
\begin{equation}
\label{e:RroughBoundsjump}
 \epsilon^2 \delta^{-1}  \sum_i k_i \sup_{\substack{ z \in \Le\\0 \leq r \leq s}} \left|  \Rg^{:k_1,\dots,k_i-1,\dots,k_m:}(\varphi(\cdot)\Pg{t-r} \Kg(\cdot-z),r^{-})  \right| |\Delta_{r}\sigma_z(\alpha^{-1} r) 	|
\end{equation}
where $\Delta_{r}\sigma_z(\alpha^{-1} r) = \sigma_z(\alpha^{-1} r) -\sigma_z(\alpha^{-1} r^-) $. Therefore we have that
\begin{multline}
\label{e:RroughBoundssum}
\left(\E \sup_{0 \leq s \leq t}  |\Rg^{:k_1,k_2,\dots,k_m:}(\varphi,s)|^p\right)^{\frac{2}{p}} \\
\leq C(p) \left(\E\ang{ \Rg^{:k_1,k_2,\dots,k_m:}(\varphi,\cdot)}_t^{\frac{p}{2}}\right)^{\frac{2}{p}} \\
+ C(p) \left(\E \sup_{0 \leq s \leq t} \left|\Delta_r  \Rg^{:k_1,k_2,\dots,k_m:}(\varphi,r) \right|^p\right)^{\frac{2}{p}}
\end{multline}
Then use Minkowski's inequality with exponent $p/2 > 1$ 
\begin{multline*}
 \left(\E \ang{\Rg^{:k_1,k_2,\dots,k_m:}(\varphi,\cdot)}_s^{\frac{p}{2}} \right)^{\frac{2}{p}} \leq	C(\kb,p)\sum_i k_i^2 \int_{[0,s)}  \sum_{z \in \Le} \epsilon^2 \times \\
 \times  \E \left[ \left(|\Rg^{:k_1,\dots,k_i-1,\dots,k_m:}(\varphi(\cdot) \Pg{t-r}\Kg(\cdot-z),r^{-}) |^2 \Qm^{i,i}(r^-,z) \right)^{\frac{p}{2}} \right]^{\frac{2}{p}} dr 
\end{multline*}
and at this point we use the H\"older inequality to separate the term $\Qm^{i,i}(r^-,z)$
\begin{multline*}
 \E \left[ \left(|\Rg^{:k_1,\dots,k_i-1,\dots,k_m:}(\varphi(\cdot) \Pg{t-r}\Kg(\cdot-z),r^{-}) |^2 \Qm^{i,i}(r^-,z) \right)^{\frac{p}{2}} \right]^{\frac{2}{p}} \\
 \leq C(p,\kappa) \E\left[\left| \Rg^{:k_1,\dots,k_i-1,\dots,k_m:}(\varphi(\cdot) \Pg{t-r}\Kg(\cdot-z),r^{-})\right|^{p+\kappa} \right]^{\frac{2}{p+\kappa}}
\end{multline*}
Where in the last line we used the bounds over the moments of $\Qm^{i,i}(r^-,z)$ provided in Proposition~\ref{prop:boundonrate}. This is the only difference with the proof of \cite{MourratWeber}, where a uniform bound on $\Qm^{i,i}(r^-,z)$ is available. We can then use induction on the integrand, with the new test function $\varphi(\cdot) \Pg{t-r}\Kg(\cdot-z)$.\\
We now bound the jump part inside the summation in \eqref{e:RroughBoundsjump} with Lemma~\ref{lemma:supboundonPK}
\[
|\delta^{-1} \epsilon^2\Pg{t-r}\Kg(x-z)| \leq \delta^{-1} \gamma^2 \log(\gamma^{-1})
\]
and using the H\"older inequality (considering $\frac{\kappa}{\kappa + p} + \frac{p}{p+\kappa} = 1$) together with
\begin{equation}
\label{e:Rboundsjumps}
\left(\E\sup_{0 \leq r \leq t, z \in \Le}|\Delta_{r^-}\sigma(z)|^{p + \frac{p^2}{\kappa}}\right)^{\frac{\kappa}{\kappa + p}} \leq \left(\E\sum_{z \in \Le} \sum_{0\leq r \leq t} |\sigma_z(\alpha^{-1} r)|^{q(p + \frac{p^2}{\kappa})}\right)^{\frac{\kappa}{(\kappa + p)q}} 
\end{equation}
where the last sum is over all jumps that happened at site $z$ in $[0,t]$. Since the number of jumps is a Poisson process with intensity bounded by $\alpha^{-1}$, the last expectation can be replaced by
\[
\E\sum_{z \in \Le} \sum_{0\leq r \leq t} |\sigma_z(\alpha^{-1} r)|^{q(p + \frac{p^2}{\kappa})} = \alpha^{-1}\E\sum_{z \in \Le} \int_0^t |\sigma_z(\alpha^{-1} r)|^{q(p + \frac{p^2}{\kappa})} dr \leq C \epsilon^{-2}\alpha^{-1}
\]
and if we choose $q$ large enough we have that the last line in \eqref{e:RroughBoundssum} is bounded by
\begin{multline*}
C(q,p,\kappa,\nn)(\epsilon^2 \delta^{-1})^2(\epsilon^2\alpha^1)^{-\frac{2\kappa}{q(p+\kappa)}} \times \\
 \times\E \left[\sup_{\substack{ z \in \Le\\0 \leq r \leq s}} \left| \sum_{x \in \Le}\epsilon^2\varphi(x)\Pg{t-r} \Kg(x-z)\Rg^{:k_1,\dots,k_i-1,\dots,k_m:}(x,r^{-})  \right|^{p+\kappa} \right]^{\frac{2}{p+\kappa}} 
\end{multline*}
where for $q$ large and by \eqref{e:scaling} we can assume $(\epsilon^2\alpha )^{-\frac{\kappa}{q(p+\kappa)}} \ll(\epsilon^2 \delta^{-1})^{-\kappa}$. This proves the inductive step. The rest of the estimates follows directly from the proof in \cite{MourratWeber}.
\end{proof}
\begin{remark}
\label{remark:puntualbound}
In the above proposition the regularity of $\varphi$ is not entering into the proof, hence it is easy to see that one could take as $\varphi$ the discrete Dirac delta on the lattice and, using Lemma~\ref{lemma:est-cap2kernel3}, obtain the bound 
\[
\E[\sup_{0 \leq s \leq t} |\Rg^{:\kb:}(x,s)|^p] \lesssim \log^{p|\kb|}(\gamma^{-1})\;.
\]
\end{remark}

A result similar to the one in Proposition~\ref{prop:RroughBounds} can be proven also for
\[
 \E \sup_{0 \leq r < t} |  \Rg^{:\kb:}(\varphi,r) - \Rg^{:\kb:}(\varphi,s \wedge r) |^p \qquad \E \sup_{0 \leq r < t} |  \Rg^{:\kb:}(\varphi,r) - R_{\gamma,s}^{:\kb:}(\varphi,s \wedge r) |^p\;.
\]
Since the proof is exactly the same as in the case of Proposition~\ref{prop:RroughBounds}, we only state the result.

\begin{corollary}

Under the same assumptions as Proposition~\ref{prop:RroughBounds} and the definition of $F_l^t$ given in \eqref{e:Fdefin} we have

\begin{equation}
\label{e:Rpowertested2}
\begin{split}
&\left( \E \sup_{0 \leq r < t} |  \Rg^{:\kb:}(\varphi,r) - \Rg^{:\kb:}(\varphi,s \wedge r) |^p \right)^{\frac{2}{p}}  \\
 &\leq C \int_{r_1 = s}^t  \int_{r_2 = 0}^{r_1} \cdots \int_{r_{|\kb|}=0}^{r_{|\kb|-1}} \sum_{\bar{y} \in (\Le)^{|\kb|}} \epsilon^{2|\kb|} \ang{\varphi, F_{|\kb|}^t(\bar{y},\bar{r})}_{L^2(\Le)}^2 d\bar{r} + \mathbf{err}_1
 \end{split}
\end{equation}

\begin{equation}
\label{e:Rpowertested3}
\begin{split}
&\left( \E \sup_{0 \leq r < s} |  \Rg^{:\kb:}(\varphi,r) - R_{\gamma,s}^{:\kb:}(\varphi, r) |^p \right)^{\frac{2}{p}}  \\
 &\leq C \int_{r_1 = 0}^s  \int_{r_2 = 0}^{r_1} \cdots \int_{r_{|\kb|}=0}^{r_{|\kb|-1}} \sum_{\bar{y} \in (\Le)^{|\kb|}} \epsilon^{2|\kb|} \ang{\varphi, F_{|\kb|}^t(\bar{y},\bar{r})- F_{|\kb|}^s(\bar{y},\bar{r})}_{L^2(\Le)}^2 d\bar{r} + \mathbf{err}_2
 \end{split}
\end{equation}

and the error terms have the same form of \eqref{e:Rpowererror} with the replacement of the kernel $F^t_l$ precisely as done in \eqref{e:Rpowertested2} and \eqref{e:Rpowertested3}.

\end{corollary}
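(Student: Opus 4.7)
Both estimates are direct analogues of Proposition~\ref{prop:RroughBounds}, and the plan is to rerun the Burkholder--Davis--Gundy/Minkowski/H\"older scheme used in its proof with only the relevant modifications at the base of the induction. The induction on $|\kb|$ and the treatment of the jump contribution $\mathbf{err}$ are identical in both cases; only the quadratic variation input changes, which in turn determines the form of the main term and, after exactly the same substitution, of the error term.

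For the first bound, the key observation is that $r \mapsto \Rg^{:\kb:}(\varphi,r) - \Rg^{:\kb:}(\varphi, s \wedge r)$ vanishes for $r \leq s$ and, for $r > s$, coincides with the martingale obtained by running the recursion \eqref{e:Rpowersdef} from time $s$ instead of from $0$. Applying BDG to this stopped martingale together with exactly the same Minkowski and H\"older steps on $\Qm^{i,i}$ as in the proof of Proposition~\ref{prop:RroughBounds} produces, at the first inductive step, an integral $\int_s^{t}$ in the new outer time variable (rather than $\int_0^{t}$). Subsequent inductive steps unfold the lower--order factors $\Rg^{:\kb-e_i:}$ in the usual way, which still span the full intervals $[0,r_{j-1}]$ because the starting time of the inner recursion is not constrained. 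After $|\kb|$ iterations only the outermost integral is constrained to start at $s$, producing the shape of the first displayed bound.

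For the second bound, note that for $r \in [0,s]$ the difference $\Rg^{:\kb:}(\varphi,r) - R_{\gamma,s}^{:\kb:}(\varphi,r)$ is again a martingale, and the Leibniz--type splitting of the recursion \eqref{e:Rpowersdef} expresses it as the sum of an integral against $d\Rg^{(i)}$ with integrand $\Rg^{:\kb - e_i:} - R_{\gamma,s}^{:\kb-e_i:}$ and an integral against $d(\Rg^{(i)} - R_{\gamma,s}^{(i)})$ with integrand $R_{\gamma,s}^{:\kb-e_i:}$. At the base case $|\kb|=1$ this reduces to the single martingale $\int_0^r (\Pg{t-u}\Kg - \Pg{s-u}\Kg) dM_\gamma$, whose BDG bound produces the squared kernel difference $(F_1^t - F_1^s)^2$. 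Iterating, the telescoping of products allows every occurrence of a $\Pg{t-\cdot}\Kg$ factor to be replaced in one slot by $\Pg{t-\cdot}\Kg - \Pg{s-\cdot}\Kg$; collecting the telescopic pieces and applying Cauchy--Schwarz yields the kernel $F_{|\kb|}^t(\bar y,\bar r) - F_{|\kb|}^s(\bar y,\bar r)$ tested against $\varphi$ in $L^2(\Le)$, as claimed.

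The main technical point in both cases is pure bookkeeping: one has to check that the factorised H\"older split on $\Qm^{i,i}$ from Proposition~\ref{prop:boundonrate} and the control of jump sizes via \eqref{e:Rboundsjumps} and Lemma~\ref{lemma:supboundonPK} do not create any worse dependence on $\epsilon,\delta,\alpha$ than in Proposition~\ref{prop:RroughBounds}. These inputs are insensitive both to whether the outer limit of integration begins at $0$ or at $s$ and to whether the kernel in a given slot is $\Pg{t-r}\Kg$ or $\Pg{t-r}\Kg - \Pg{s-r}\Kg$ (the latter is dominated by the former after Lemma~\ref{lemma:supboundonPK}). Consequently the error terms $\mathbf{err}_1$ and $\mathbf{err}_2$ inherit exactly the shape of \eqref{e:Rpowererror} after performing in the kernel $F_l^t$ the same replacement as in the main term.
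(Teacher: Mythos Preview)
Your plan is correct and matches the paper's approach: the paper itself gives no details beyond ``the proof is exactly the same as in the case of Proposition~\ref{prop:RroughBounds}'', and your proposal correctly identifies the two modifications needed at the base of the induction (restricting the outermost time integral to $[s,t]$ for \eqref{e:Rpowertested2}, and replacing the kernel by $F^t-F^s$ for \eqref{e:Rpowertested3}).

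One small clarification on the second bound: the Leibniz/telescoping description is fine for seeing that $\Rg^{:\kb:}(\varphi,\cdot)-R_{\gamma,s}^{:\kb:}(\varphi,\cdot)$ is a single iterated integral against $M_\gamma$ with deterministic integrand $\langle\varphi,F_{|\kb|}^t-F_{|\kb|}^s\rangle$, but once this is observed there is no need for Cauchy--Schwarz. The BDG/Minkowski/H\"older scheme of Proposition~\ref{prop:RroughBounds} never uses the product structure of $F^t$ --- each application of BDG simply peels off the outermost stochastic integration and leaves an iterated integral of order $|\kb|-1$ with the same integrand (now with one pair $(y_l,r_l)$ frozen). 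Iterating $|\kb|$ times therefore yields $\langle\varphi,F_{|\kb|}^t-F_{|\kb|}^s\rangle^2$ directly, with the jump contributions producing $\mathbf{err}_2$ exactly as in \eqref{e:Rpowererror}. If instead one splits via Leibniz and applies BDG to each piece separately, one lands on a sum of squares of the telescoped pieces, which dominates but does not equal the stated bound; so the cleanest route is to keep the difference as a single iterated integral throughout.
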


With the above considerations we are ready to state the bounds on the solution of the linear dynamic and its Wick powers.
\begin{proposition}
\label{prop:cap2boundsonR}
There exists $\gamma_0 > 0$ such that the following holds. For every multiindex $\kb\in \N^n$ , $p>1$ $\nu > 0$, $T \geq 0$, $0 \leq \lambda < \frac{1}{2}$ and $0 < \kappa \leq 1$, there exists a constant $C = C(\kb, , \nu, T,\lambda,\kappa)$ such that $0\leq s \leq t \leq T$ and $0 < \gamma < \gamma_0$
\begin{align*}
\E \sup_{0 \leq r \leq t} \Norm{\Cc^{-\nu-2\lambda}}{ R_{\gamma,t}^{:\kb:}(\cdot,r)}^p &\leq C t^{\lambda p} + C \gamma^{p(1-\kappa)}\\
\E \sup_{0 \leq r \leq t} \Norm{\Cc^{-\nu-2\lambda}}{ R_{\gamma,t}^{:\kb:}(\cdot,r ) - R_{\gamma,s}^{:\kb:}(\cdot,r \wedge s )}^p &\leq C |t-s|^{\lambda p} + C \gamma^{p(1-\kappa)}\\
\E \sup_{0 \leq r \leq t} \Norm{\Cc^{-\nu-2\lambda}}{ R_{\gamma,t}^{:\kb:}(\cdot,r ) - R_{\gamma,t}^{:\kb:}(\cdot,r \wedge s )}^p &\leq C |t-s|^{\lambda p} + C \gamma^{p(1-\kappa)}
\end{align*}
And, taking the limit $s \to t$ of the martingales, the same bounds are satisfied by $\Zg$.
\end{proposition}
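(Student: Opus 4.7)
The plan is to control the $\Cc^{-\nu-2\lambda}$ norm via its Littlewood--Paley characterization
\[
\Norm{\Cc^{-\nu-2\lambda}}{f} = \sup_{k\geq -1} 2^{-(\nu+2\lambda)k}\Norm{L^\infty(\T^2)}{\delta_k f},
\]
and reduce the estimate to pointwise moment bounds for $\Rg^{:\kb:}(\varphi,r)$ that can be fed into Proposition~\ref{prop:RroughBounds}. Combining Bernstein's inequality $\Norm{L^\infty}{\delta_k g}\lesssim 2^{2k/q}\Norm{L^q}{\delta_k g}$ with a choice of $p\geq q$ large enough that $2/q$ is absorbed into the available regularity slack, together with Fubini and a dyadic summation in $k$, the problem reduces to showing, uniformly in $k\geq -1$ and $x\in\T^2$, a bound of the form
\[
\E\sup_{0\leq r\leq t}\bigl|\Rg^{:\kb:}(\varphi_k^x,r)\bigr|^p \;\lesssim\; 2^{(2\nu+4\lambda)kp/2}\bigl(t^{\lambda p}+\gamma^{p(1-\kappa)}\bigr),
\]
where $\varphi_k^x$ is (up to normalisation) the convolution kernel of the $k$-th Paley--Littlewood block centred at $x$, localised at scale $2^{-k}$ with $L^2$ mass of order $2^{k}$.

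Next I would apply \eqref{e:Rpowertested} with $\varphi=\varphi_k^x$. The leading term is an iterated time integral whose spatial ingredient is
\[
\sum_{\bar y\in(\Le)^{|\kb|}}\epsilon^{2|\kb|}\ang{\varphi_k^x,F^t_{|\kb|}(\bar y,\bar r)}^2.
\]
Performing the sums layer by layer collapses two copies of $\Pg{t-r_i}\Kg$ into the single convolution $\Pg{2(t-r_i)}\Kg^{*2}$, so that after all sums the expression reduces to a scalar integral of products of continuous-like heat kernels evaluated between two copies of $\varphi_k^x$. Using the standard estimate $\Pg{t-r}\Kg(\cdot)\lesssim\min(t-r,\gamma^2)^{-1}$ together with its diffusive spatial decay, each time layer contributes a factor of order $\min((t-r_i),2^{-2k})^{-1}$ modulo logarithms, and integrating in the ordered simplex in $\bar r$ yields the announced bound $t^{\lambda p}\cdot 2^{(2\nu+4\lambda)kp/2}$ after distributing $|\kb|$ factors of the minimum among the time integrals.

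The error term \eqref{e:Rpowererror} carries the prefactor $(\delta^{-1}\epsilon^{2})^{2-\kappa}$, which by the scaling \eqref{e:scaling} equals $\gamma^{2(1-\kappa)}$. Using induction on $|\kb|$ together with the pointwise bound of Remark~\ref{remark:puntualbound} and the $L^\infty$ estimate for $\epsilon^2\Pg{t-r}\Kg$ (Lemma~\ref{lemma:supboundonPK}), each such term is bounded by $C\log^{Cp}(\gamma^{-1})\gamma^{p(1-\kappa)}$, which is absorbed into $C\gamma^{p(1-\kappa')}$ by relabelling $\kappa$. For the two time-increment bounds one uses \eqref{e:Rpowertested2}, whose outermost time integral is restricted to $[s,t]$ and directly produces an extra $|t-s|^{\lambda p}$, and \eqref{e:Rpowertested3}, in which the difference $F^t_{|\kb|}-F^s_{|\kb|}$ is controlled via $\Pg{t-r}-\Pg{s-r}=(\Pg{t-s}-\mathrm{Id})\Pg{s-r}$ combined with H\"older regularity of the heat semigroup. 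A Kolmogorov continuity argument then upgrades the moment bounds at fixed $r$ to the $\sup_r$ bound, and the claim for $\Zg$ follows by $s\to t$ in \eqref{e:Zpowerdef} and Fatou.

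The hard part will be the bookkeeping in the iterated kernel computation: the kernel $\Pg{t-r}\Kg$ involves three scales (lattice $\epsilon$, Kac $\gamma$, diffusion $\sqrt{t-r}$) that must all be compared with the Paley--Littlewood scale $2^{-k}$, and one has to confirm that the martingale cancellations built into \eqref{e:Rpowersdef} exactly remove the short-distance divergences at coinciding times so that the iterated time integrals are indeed finite for arbitrary $|\kb|$. This is the Wick renormalisation content of the statement and the point where the analogous estimates of \cite{MourratWeber} for $|\kb|\leq 3$ and of \cite{ShenWeber} for $|\kb|\leq 5$ must be extended uniformly in $|\kb|$.
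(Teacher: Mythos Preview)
Your approach is the same as the paper's: apply Proposition~\ref{prop:RroughBounds} (and its corollary \eqref{e:Rpowertested2}, \eqref{e:Rpowertested3}) with $\varphi$ equal to the Paley--Littlewood kernel of each block, and then pass from pointwise moments to the Besov norm; the paper simply refers to \cite[Prop.~4.2]{MourratWeber} for the bookkeeping. Two small remarks: (i) under the scaling \eqref{e:scaling} one has $\delta^{-1}\epsilon^{2}\simeq\gamma^{2n-1}$, not $\gamma$, so the prefactor $(\delta^{-1}\epsilon^{2})^{2-\kappa}$ is $\gamma^{(2n-1)(2-\kappa)}$, which is only better than what you claim; (ii) the Kolmogorov step is unnecessary, since the bounds \eqref{e:Rpowertested}--\eqref{e:Rpowertested3} already carry $\sup_{0\leq r\leq t}$ inside the expectation, and the remaining suprema over $k$ and $x$ are handled by the trigonometric-polynomial structure (finitely many blocks, discretise $x$) rather than by a continuity criterion. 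Your final worry about extending \cite{MourratWeber,ShenWeber} to arbitrary $|\kb|$ is unfounded: the iterated-integral formula \eqref{e:Rpowertested} is already stated and proved for general $|\kb|$, and the kernel estimate in each time layer is the same regardless of $|\kb|$, so no new cancellation beyond the martingale structure of \eqref{e:Rpowersdef} is required.
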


\begin{proof}
The proof is equal to \cite[Prop~4.2]{MourratWeber}, with the use of the bounds \eqref{e:Rpowertested}, \eqref{e:Rpowertested2}, \eqref{e:Rpowertested3}. We will only need to apply Proposition~\ref{prop:RroughBounds} repeatedly with $\varphi$ equals to the kernel of every Paley-Littlewood projection. 
\end{proof}

\begin{remark}
For the above estimates we didn't use the fact that, for $i  \neq j$, the martingales $M^{(i)}_t$ and $M^{(j)}_t$ are orthogonal in the limit. The calculation of the covariation will be addressed in the next section.
\end{remark}

\begin{remark}
\label{cap2remarkboundsonZ}
As the bounds in Proposition~\ref{prop:cap2boundsonR} for $R_{\gamma,t}^{:\kb:}(s,\cdot)$ are uniform in $s$, the same bounds are available for the process $\Zg$ defined in \eqref{e:Zdef}. 
\end{remark}

We now state a lemma that gives a better control over the high frequencies of the fluctuation field, which will be used when we will extend the powers of the linear process to the continuous torus in Section~\ref{sec:nonlinear}. The next lemma correspond to \cite[Lemma~4.6]{MourratWeber}, and the proof follows exactly the same steps.
\begin{lemma}
\label{lemma:highfreqcontrol}
Recall the definition of $\ZgH$ in \eqref{e:highlowestensiondef}. For all $p \geq 1$, $\kappa > 0$ and $T > 0$, there exists a constant $C = C(p,\kappa,T,\nn)$ such that for all $\gamma < \gamma_0$ and $0 \leq t \leq T$
\begin{equation}
\label{e:highfreqcontrol}
\E \left[ \Norm{L^{\infty}(\T^2)}{\ZgH(\cdot,t)}^p\right]^{1/p} \leq C \gamma^{1- \kappa} \;.
\end{equation}
\end{lemma}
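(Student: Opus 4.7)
The plan is to mirror the argument of \cite[Lemma~4.6]{MourratWeber}, with the additional technical care required by the fact that our spins are unbounded (already developed in Proposition~\ref{prop:RroughBounds}). The strategy rests on three ingredients: (i) a Besov embedding $\mathcal{B}^{\alpha}_{p,p}(\T^2) \hookrightarrow L^\infty(\T^2)$ valid whenever $\alpha p > 2$, reducing $\Norm{L^\infty}{\ZgH}$ to a sum of dyadic $L^p$ norms of Paley--Littlewood blocks; (ii) Burkholder--Davis--Gundy moment bounds on each block $\delta_k \Zg(x,t)$, obtained by applying Proposition~\ref{prop:RroughBounds} with test function $\varphi$ equal to the Paley--Littlewood kernel $\check{\rho}_k(\cdot - x)$; (iii) the sharp Fourier decay of the mollifier $\hat{K}_\gamma$ at frequencies $|\omega|\gtrsim \epsilon^{-1}$, which is where the gain $\gamma^{1-\kappa}$ ultimately comes from. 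Since $\Zg(\cdot,t)$ is a trigonometric polynomial of degree at most $N = O(\epsilon^{-1})$, only $O(|\log\gamma|)$ scales $k_0 \leq k \leq k_1$ contribute to $\ZgH$, with $k_0, k_1 \sim \log_2 \epsilon^{-1}$.

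The core computation is the following. By Parseval the $L^2(\Le)$-kernel appearing in the BDG estimate of Proposition~\ref{prop:RroughBounds} applied with $\varphi = \check{\rho}_k(\cdot - x)$ is, up to lower-order jump remainders,
\[
\int_0^t \sum_{y \in \Le} \epsilon^2 \bigl(\Pg{t-r} \Kg \aste \check{\rho}_k\bigr)^2(x-y)\, dr \;\ls\; \sum_{\omega} |\hat{\rho}_k(\omega)|^2\, |\hat K_\gamma(\omega)|^2 \int_0^t |\hat P^\gamma_{t-r}(\omega)|^2\, dr.
\]
For $|\omega|$ in the dyadic shell around $2^k \sim \epsilon^{-1} = \gamma^{-n}$: since $\KK \in \Cc^2$, we have $|\hat K_\gamma(\omega)| = |\hat{\KK}(\gamma^{n-1}\omega)| \ls (\gamma^{n-1}|\omega|)^{-2} \sim \gamma^{2}$; the symbol of the discrete Laplacian satisfies $|\hat\Delta_\gamma(\omega)| \sim \gamma^{2-2n}$, so $\int_0^t |\hat P^\gamma_{t-r}(\omega)|^2\, dr \ls |\hat\Delta_\gamma(\omega)|^{-1} \sim \gamma^{2n-2}$; and the number of lattice modes in the shell is $\ls 2^{2k}\sim \gamma^{-2n}$. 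Multiplying, the kernel above is bounded by $\gamma^{-2n}\cdot \gamma^{4}\cdot \gamma^{2n-2} = \gamma^{2}$. BDG then upgrades this $L^2$ bound to $\E|\delta_k\Zg(x,t)|^p \ls \gamma^{p(1-\kappa)}$ for any $\kappa>0$, provided the jump remainder is controlled. Summing via the Besov embedding,
\[
\E\Norm{L^\infty}{\ZgH(\cdot,t)}^p \;\ls\; \sum_{k_0 \leq k \leq k_1} 2^{\alpha p k}\, \gamma^{p(1-\kappa)} \;\ls\; |\log\gamma|\cdot \gamma^{p(1-\alpha n - \kappa)},
\]
and choosing $p$ large with $\alpha$ slightly above $2/p$ makes $\alpha n$ arbitrarily small, yielding the claimed bound $\gamma^{p(1-\kappa')}$ for any prescribed $\kappa'>0$.

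\paragraph{Main obstacle.}
The chief technical point where the present setting diverges from \cite{MourratWeber} is the control of the jump remainder in the BDG estimate: in the $\pm 1$ Ising situation every single-spin jump is bounded by $2/\delta$ in macroscopic units, while here the spins are unbounded and only have exponential moments under $\nug$ via \eqref{e:hpexponentialmoment}. The resolution is to apply, inside BDG, exactly the same H\"older trick used in the proof of Proposition~\ref{prop:RroughBounds}: one separates the jump from the rate measure $\Qm^{i,j}$ using H\"older with exponents $(p/(p+\kappa), (p+\kappa)/\kappa)$, bounds the jump-size moments via \eqref{e:hp2bis} (or \eqref{e:hpbound}) combined with Proposition~\ref{prop:boundonrate}, and absorbs the resulting small negative power of $\gamma$ into the $\kappa$-budget. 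Once this is in place, the Fourier computation above is straightforward and the three error terms \eqref{e:Rpowererror} are negligible compared with the dominant $\gamma^{p(1-\kappa)}$ contribution.
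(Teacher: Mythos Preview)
Your proposal is correct and follows essentially the same route as the paper, which simply defers to \cite[Lemma~4.6]{MourratWeber}: Besov embedding to reduce to dyadic $L^p$ blocks, the BDG bound of Proposition~\ref{prop:RroughBounds} with $\varphi=\check\rho_k(\cdot-x)$, and then the Fourier estimate exploiting $|\hat K_\gamma(\omega)|\ls (\epsilon\gamma^{-1}|\omega|)^{-2}\sim\gamma^2$ at frequencies $|\omega|\sim\epsilon^{-1}$. Your identification of the only new point---the H\"older separation of $\Qm^{i,j}$ already built into Proposition~\ref{prop:RroughBounds} to handle unbounded spins---is exactly right; once that proposition is in hand, nothing further changes from the reference.
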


\section{Tightness and convergence for the linearized system }
\label{sec:tightness-linear}

In this section we state the tightness result for the powers of the linearized process $\Zg$ given by \eqref{e:Zpowerdef} and we will characterize the limit with a martingale problem in Subsection~\ref{subsec:convergence-linear}. This is the main reason for the introduction of the stopping time in Subsection~\ref{subsec:stopping time}. 

For a separable metric space $\mathcal{A}$, denote with $\D([0,T],\mathcal{A})$ the Skorokhod space of cadlag function taking value in $\mathcal{A}$ endowed with the Skorokhod topology: this makes $\D([0,T],\mathcal{A})$ a metric space as well with the distance
\[
\dist_{\D(\R^+,\mathcal{A})} = \sup_{\lambda \in \Lambda_{[0,T]}} \max \left\lbrace \norm{\infty}{\lambda - id} , \norm{L^{\infty}[0,T]}{f \circ \lambda - g} \right\rbrace
\]
for $f,g \in \D(\R^+,\mathcal{A})$ and for $\lambda \in \Lambda_{[0,T]}$ the space of continuous reparametrization of the interval $[0,T]$.\\

The following proposition corresponds to \cite[Prop.~5.4]{MourratWeber} and provide the tightness result for the laws of the Wick powers. We recall that $\Zg$ is a multivariate process with $m$ components and $\Zg^{:\kb:}(t,\cdot) \in \Cc^{-\nu}(\T^d)$ by Proposition~\ref{prop:cap2boundsonR} and Remark \ref{cap2remarkboundsonZ}.

\begin{proposition}
\label{prop:Ztightness}
Denote by $\gamma_0$ the constant in \cite[Lemma~8.2]{MourratWeber}. For any multiindex $\kb\in \N^n$ and $\nu > 0$, the family $\left\lbrace \Zg^{:\kb:};\gamma \in (0,\gamma_0)\right\rbrace$ is tight in $\D(\R^+, \Cc^{-\nu}(\T^d))$.\\
Any weak limit is supported on $\Cc(\R^+,\Cc^{-\nu}(\T^d))$ and
\begin{equation}
\label{cap2tightextimation}
\sup_{\gamma \in (0,\gamma_0)} \E \sup_{0 \leq t \leq T} \Norm{\Cc^{-\nu}}{\Zg^{:\kb:}(t,\cdot)}^p < \infty\;.
\end{equation}
\end{proposition}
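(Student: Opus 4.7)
The plan is to combine the Besov regularity bounds of Proposition~\ref{prop:cap2boundsonR} with a compact-embedding argument for one-time tightness and a Kolmogorov-type modulus-of-continuity estimate for time regularity, from which both tightness in $\D(\R^+, \Cc^{-\nu}(\T^2))$ and the concentration of weak limits on $C(\R^+, \Cc^{-\nu}(\T^2))$ will follow. The uniform moment bound \eqref{cap2tightextimation} is essentially immediate from Proposition~\ref{prop:cap2boundsonR} together with Remark~\ref{cap2remarkboundsonZ}, so the substantive content is the tightness and the continuity of limits.

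First, fix $\nu > 0$ and choose auxiliary parameters $\nu' \in (0,\nu)$ and $\lambda \in (0,1/2)$ with $\nu' + 2\lambda < \nu$, so that $\Cc^{-\nu'-2\lambda}(\T^2) \hookrightarrow \Cc^{-\nu}(\T^2)$ is compact. Applying Proposition~\ref{prop:cap2boundsonR} with regularity parameter $\nu'$ in place of $\nu$ and Remark~\ref{cap2remarkboundsonZ} yields, for any $p>1$,
\[
\sup_{\gamma \in (0,\gamma_0)} \E \sup_{0 \leq t \leq T} \Norm{\Cc^{-\nu'-2\lambda}}{\ZG{\kb}(\cdot,t)}^p < \infty.
\]
This supplies \eqref{cap2tightextimation} as well as, by Markov's inequality and the compactness of the embedding, tightness of the one-time marginals of $\ZG{\kb}$ in $\Cc^{-\nu}(\T^2)$, uniformly in $t \in [0,T]$.

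Second, I would turn to the time-regularity estimate. For $0 \leq s < t \leq T$, recalling $\ZG{\kb}(t) = \lim_{r \nearrow t} R_{\gamma,t}^{:\kb:}(r)$, the telescoping identity
\[
\ZG{\kb}(t) - \ZG{\kb}(s) = \bigl[R_{\gamma,t}^{:\kb:}(t^-) - R_{\gamma,t}^{:\kb:}(s^-)\bigr] + \bigl[R_{\gamma,t}^{:\kb:}(s^-) - R_{\gamma,s}^{:\kb:}(s^-)\bigr]
\]
expresses the $\ZG{\kb}$-increment as the sum of two quantities that are directly controlled by the third and the second bound of Proposition~\ref{prop:cap2boundsonR} respectively. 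One arrives at
\[
\E \Norm{\Cc^{-\nu}}{\ZG{\kb}(t) - \ZG{\kb}(s)}^p \leq C \bigl( |t-s|^{\lambda p} + \gamma^{p(1-\kappa)} \bigr),
\]
and choosing $p$ sufficiently large so that $\lambda p > 1$ (together with some $\kappa < 1$) gives an estimate of Kolmogorov--Chentsov type, up to the $\gamma$-dependent error.

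Tightness in $\D([0,T],\Cc^{-\nu})$ then follows from Aldous' criterion: Markov's inequality applied to the increment bound controls $\P\bigl(\Norm{\Cc^{-\nu}}{\ZG{\kb}(\tau+\delta) - \ZG{\kb}(\tau)} > \eta\bigr)$ uniformly over stopping times $\tau \leq T - \delta$ by $C\eta^{-p}(\delta^{\lambda p} + \gamma^{p(1-\kappa)})$, which is made small by first choosing $\gamma$ small and then $\delta$ small; combined with the one-time tightness this yields Skorokhod tightness. To see that every weak limit is supported on $C([0,T],\Cc^{-\nu})$, one uses that a single jump of $R_{\gamma,t}^{:\kb:}$ is of order $O(\gamma^{1-\kappa})$ in $\Cc^{-\nu}$ (by iterating the jump estimate \eqref{e:RroughBoundsjump} from the proof of Proposition~\ref{prop:RroughBounds} and using the exponential-moment assumption \eqref{e:hpexponentialmoment} on $\nug$), so the maximal jump of $\ZG{\kb}$ on $[0,T]$ vanishes in probability as $\gamma \to 0$. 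The main subtlety of the argument is precisely the $\gamma^{p(1-\kappa)}$ term in the increment bound: since it does not vanish in $|t-s|$, the classical Kolmogorov--Chentsov criterion does not apply verbatim, but Aldous' formulation (allowing $\gamma \to 0$ before $\delta \to 0$) together with the vanishing-jump estimate absorbs this nuisance.
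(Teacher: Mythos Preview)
Your proposal is correct and follows exactly the route the paper takes (the paper simply cites \cite[Prop.~5.4]{MourratWeber} and says the result is a consequence of Proposition~\ref{prop:cap2boundsonR}); the ingredients you assemble---compact Besov embedding for one-time tightness, the telescoped time-increment bounds from Proposition~\ref{prop:cap2boundsonR}, and the vanishing-jump estimate for continuity of the limit---are precisely those used there. One small imprecision worth flagging: the increment bound you derive is for \emph{deterministic} times $s<t$, so it does not transfer to stopping times $\tau$ by Markov's inequality alone, as Aldous' criterion would require; the cleaner argument (and the one in \cite{MourratWeber}) is to use the deterministic-time moment bound together with the vanishing of jumps to control the uniform modulus of continuity directly via a Kolmogorov-type chaining, which avoids stopping times altogether and handles the $\gamma^{p(1-\kappa)}$ nuisance term exactly as you describe.
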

\begin{proof}
The proof is the same as the proof of \cite[Prop.~5.4]{MourratWeber}, and it is a consequence of Proposition~\ref{prop:cap2boundsonR}.
\end{proof}

We will now formalize the fact that the iterated integrals, introduced in Section~\ref{sec:linear}, are a convenient approximation of the Wick power of the solution to the linear model. 
The proof of the next theorems are essentially the same as in \cite{MourratWeber}. Nonetheless the next subsection contains the main arguments for the proofs, and can be skipped on a first reading.\\

Let $H_{\kb}$ be the generalized Hermite polynomial defined in Section~\ref{subsec:hermite}, and $[R_{\gamma,t}(\cdot,x)]_s = \left([R_{\gamma,t}^{(i)},R_{\gamma,t}^{(j)}(\cdot,x)]\right)_{i,j = 1}^m$ the optional quadratic variation matrix. If we define the error
\begin{equation}
\label{e:cap2Eproc}
E^{:\kb:}_{\gamma,t}(s,x) \eqdef  H_{\kb}\left((R_{\gamma,t}(s,x) , [R_{\gamma,t}(\cdot,x)]_s \right) - R_{\gamma,t}^{:\kb:}(s,x)\;,
\end{equation}

then we can prove the following version of \cite[Prop.~5.3]{MourratWeber}.

\begin{proposition}
\label{prop:cap2boundonHermiteerror}
For any multiindex $\kb\in \N^m$, $\kappa > 0$, $t > 0$ and $1 \leq p < \infty$ there exists $C=C(\kb,p,t,\kappa,\nn)$ such that for all $\gamma \in (0,\gamma_0)$
\begin{equation}
\label{e:cap2boundonHermiteerror}
\E \sup_{x \in \Le} \sup_{0 \leq s \leq t} |E_{\gamma,t}^{:\kb:}(s,x)|^p \leq C \gamma^{p(1-\kappa)}\;.
\end{equation}
\end{proposition}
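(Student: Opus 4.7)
First I would proceed by induction on $|\kb|$. The base case $|\kb|=1$ is immediate, since the Hermite polynomial of degree one is linear, so $H_\kb(\Rg,[\Rg])$ coincides with $\RG{\kb}$ and $\EG{\kb}\equiv 0$. For $|\kb|\geq 2$, fix $x\in\Le$, set $X_r:=\Rg(x,r)\in\R^m$ and $T_r:=[\Rg(\cdot,x)]_r\in\R^{m\times m}$, and apply the general It\^o formula for semimartingales to $H_\kb(X_s,T_s)$. Since $X$ and $T$ are both pure-jump (with $\Delta T_r=(\Delta X_r)^{\otimes 2}$), the formula reduces to the gradient integral $\int_0^s\nabla_X H_\kb\cdot dX_r$, the finite jump sum $\sum_{r\leq s}\nabla_T H_\kb(X_{r^-},T_{r^-})\cdot(\Delta X_r)^{\otimes 2}$, and the usual second-order jump remainder.

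The key observation is the Hermite identity $\partial_{T_{ij}}H_\kb=-\tfrac{1}{2}\partial^2_{X_iX_j}H_\kb$, which follows directly from the generating function \eqref{e:Hermiteexponential}. It makes the $\nabla_T$-integral exactly cancel the $O(|\Delta X_r|^2)$ part of the Taylor expansion of the jump remainder, so only cubic-or-higher-order jump corrections survive. Combining this cancellation with $\partial_{X_i}H_\kb=k_i H_{\kb-e_i}$ and the inductive decomposition $H_{\kb-e_i}(X_{r^-},T_{r^-})=\RG{\kb-e_i}(x,r^-)+\EG{\kb-e_i}(r^-,x)$, I obtain
\[
\EG{\kb}(s,x)=\sum_{i=1}^m k_i\int_0^s\EG{\kb-e_i}(r^-,x)\,dR_{\gamma,t}^{(i)}(x,r)+J^{:\kb:}_{\gamma,t}(s,x),
\]
where $J^{:\kb:}_{\gamma,t}$ is a jump sum whose summands are of order $\|\nabla^3 H_\kb(X_{r^-},T_{r^-})\|\cdot|\Delta X_r|^3$ (or higher in $|\Delta X_r|$).

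For fixed $x$ I would estimate these two contributions separately, the supremum in $s$ being absorbed in each case by Doob/BDG. The iterated martingale is controlled via Burkholder-Davis-Gundy combined with the inductive bound $\|\EG{\kb-e_i}\|_{L^{p'}}\lesssim\gamma^{1-\kappa'}$ and the $L^{p'}$ moment bounds on $[R_{\gamma,t}^{(i)}]$ coming from Proposition~\ref{prop:boundonrate}, yielding $O(\gamma^{p(1-\kappa)})$ in $L^p$. For $J^{:\kb:}_{\gamma,t}$, Remark~\ref{remark:puntualbound} together with pointwise control of $[\Rg]$ bound $\nabla^3 H_\kb(X_{r^-},T_{r^-})$ in every $L^{p'}$ by a power of $\log(\gamma^{-1})$; the individual jump sizes satisfy
\[
|\Delta\Rg(x,r)|\leq\epsilon^2\delta^{-1}\|\Pg{t-r}\Kg\|_\infty\,|\eta_z-\sigma_z(r^-)|\leq C\gamma^{1-\kappa}
\]
in every $L^{p'}$ uniformly in $(r,z)$, using $\epsilon\simeq\gamma^n$, $\delta=\gamma$, Lemma~\ref{lemma:supboundonPK} and the spin bound in Proposition~\ref{prop:boundonrate}. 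Writing $\sum_r|\Delta X_r|^3\leq(\sup_r|\Delta X_r|)\cdot[\Rg(x,\cdot)]_s$ and inserting the $L^{p'}$ bound on the bracket then gives $\|J^{:\kb:}_{\gamma,t}\|_{L^p}=O(\gamma^{p(1-\kappa)})$.

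Finally, to upgrade the pointwise-in-$x$ estimate to a supremum over $\Le$, I would apply a crude union bound: since $|\Le|\leq C\gamma^{-2n}$ and the pointwise estimate holds for every exponent $p'\geq 1$, choosing $p'$ large enough to absorb the polynomial factor $\gamma^{-2n}$ into an arbitrarily small worsening of the $\gamma$-exponent produces the claimed $\gamma^{p(1-\kappa)}$ for every $p\geq 1$ and every $\kappa>0$. The hardest step, I expect, will be verifying the It\^o/Hermite cancellation cleanly at every order and carefully tracking the residual $J^{:\kb:}_{\gamma,t}$, whose coefficients are themselves lower-degree Hermite polynomials that must be controlled uniformly in $\gamma$ up to logarithmic losses.
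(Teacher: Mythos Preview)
Your proposal is correct and follows essentially the same route as the paper: induction on $|\kb|$, the It\^o formula for jump semimartingales applied to $H_\kb(\Rg,[\Rg])$, the Hermite identity $\partial_{T_{ij}}H_\kb=-\tfrac12\partial_{X_iX_j}^2 H_\kb$ to kill the second-order jump terms, BDG plus the inductive hypothesis on the resulting stochastic integral, and the factorisation $\sum_r|\Delta X_r|^3\le(\sup_r|\Delta X_r|)\sum_r|\Delta X_r|^2$ for the cubic jump remainder, which is exactly the combination of Lemmas~\ref{lemma:Rjumpsbound} and~\ref{lemma:Rjumpsbound2} the paper invokes. Your explicit union bound over $x\in\Le$ to pass from pointwise to uniform estimates is in fact a cleaner treatment of a step the paper handles somewhat implicitly.
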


The form of the error considered in \eqref{e:cap2Eproc} is somehow unsatisfactory because of the presence of the optional quadratic variation in \eqref{e:cap2Eproc}. It is possible to prove, however, that the quadratic variation can be approximated by a diagonal matrix. This will be the content of Propositions~\ref{prop:boundcrossvariation} and \ref{prop:hermiteRbound}.\\

The next lemma shows that the quadratic variation $\ang{R_{t,\gamma}(\cdot,x)}_s$ approximates the bracket process $[R_{t,\gamma}(\cdot,x)]_s$ as in \cite[Lemma~5.1]{MourratWeber}. Its proof is postponed to Subsection~\ref{subsec-tightnessproof}.
\begin{lemma}
\label{lemma:Ubound}
Let $x \in \Lambda_N$, $s \in [0,t]$, and define the ($m \times m$) martingale $U_{\gamma,t}(s,x)$ as
\begin{equation}
\label{e:cap2quadvardiff}
U_{\gamma,t}^{(i,j)}(s,x) \eqdef \left[ R^{(i)}_{t,\gamma}(\cdot,x),R^{(j)}_{t,\gamma}(\cdot,x)\right]_s - \ang{R^{(i)}_{t,\gamma}(\cdot,x),R^{(j)}_{t,\gamma}(\cdot,x)}_s
\end{equation}
for $1 \leq i,j \leq m$.\\
For all $n \in \N^+,\ t > 0,\ \kappa > 0$ and $ p \in [1,\infty]$, there exists a constant $C=C(t,\kappa,p,m)$ such that for $\gamma \in (0,\gamma_0)$
\begin{equation}
\label{e:cap2quadvardiffbound}
\E \sup_{x \in \Le} \sup_{0 \leq s \leq t} \left| U_{\gamma,t}(s,x) \right|_{m \times m}^p \leq C \gamma^{p(1-\kappa)}
\end{equation}
where $|\cdot|_{m \times m}$ is the norm in $\R^{m \times m}$ and $\gamma_0$ is the constant in \cite[Lemma~8.2]{MourratWeber}.
\end{lemma}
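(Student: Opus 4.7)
The plan is to identify $U^{(i,j)}_{\gamma,t}(\cdot,x)$ as a purely discontinuous martingale on $[0,t]$, control it via Burkholder-Davis-Gundy, and leverage the same kernel estimates used in the proof of Proposition~\ref{prop:RroughBounds}.

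First I would identify the jump structure. A spin flip at macroscopic site $y \in \Le$ at macroscopic time $r$ produces the jump
\[
\Delta R^{(i)}_{\gamma,t}(x,r) \;=\; \delta^{-1}\epsilon^2 \, \Pg{t-r} \Kg(x-y)\bigl(\eta^{(i)}-\sigma^{(i)}_{\epsilon^{-1}y}(\alpha^{-1}r^-)\bigr),
\]
and the optional quadratic covariation $[R^{(i)}(\cdot,x),R^{(j)}(\cdot,x)]_s$ is by definition the cumulative sum of $\Delta R^{(i)}\Delta R^{(j)}$ over jump times, while $\ang{R^{(i)}(\cdot,x),R^{(j)}(\cdot,x)}_s$ is its predictable compensator (as written out in the analogue of \eqref{e:cap2quadrvarlin2}). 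Hence $U^{(i,j)}_{\gamma,t}(\cdot,x)$ is a pure-jump martingale whose jumps are exactly the products $\Delta R^{(i)}\Delta R^{(j)}$.

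Next, for a fixed $x\in \Le$, apply the Burkholder-Davis-Gundy inequality to the scalar martingale $U^{(i,j)}_{\gamma,t}(\cdot,x)$: for any $p\geq 2$,
\[
\E \sup_{0\leq s\leq t} |U^{(i,j)}_{\gamma,t}(s,x)|^p \;\leq\; C_p\, \E \ang{U^{(i,j)}(\cdot,x)}_t^{p/2} + C_p\, \E\sup_{r\leq t}|\Delta_r U^{(i,j)}(\cdot,x)|^p.
\]
The predictable bracket is
\[
\ang{U^{(i,j)}(\cdot,x)}_s \;=\; \alpha^{-1}\sum_{y\in\Le}\int_0^s \bigl(\delta^{-1}\epsilon^{2}\Pg{t-r}\Kg(x-y)\bigr)^{4}\; \E\bigl[(\eta^{(i)}-\sigma^{(i)}_y)^2(\eta^{(j)}-\sigma^{(j)}_y)^2\,|\,\mathcal F_{r^-}\bigr]\,dr.
\]
The conditional expectation is uniformly bounded by Proposition~\ref{prop:boundonrate}. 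For the kernel factor I would combine the pointwise bound $\delta^{-1}\epsilon^2 \sup_y \Pg{t-r}\Kg \lesssim \gamma\log(\gamma^{-1})$ used in the proof of Proposition~\ref{prop:RroughBounds} with $\sum_y \epsilon^2 (\Pg{t-r}\Kg(x-y))^2 \lesssim (t-r)^{-1}$, the small-time regime $t-r\lesssim \alpha$ being treated separately by replacing $\Pg{t-r}\Kg$ with $\Kg$ itself (an approximate identity at scale $\gamma$). Using the scaling $\alpha^{-1}\delta^{-2}\epsilon^{2}=1$ from \eqref{e:scaling}, the extra factor $(\delta^{-1}\epsilon^2\sup \Pg{}\Kg)^2 \lesssim \gamma^2 \log^2(\gamma^{-1})$ relative to $\ang{R}$ produces
\[
\ang{U^{(i,j)}(\cdot,x)}_t \;\leq\; C\,\gamma^2\,\log^{C}(\gamma^{-1}),
\]
uniformly in $x$. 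The jump term in the BDG bound is handled exactly as in \eqref{e:RroughBoundsjump}--\eqref{e:Rboundsjumps}: the size of any individual jump is at most $(\delta^{-1}\epsilon^2\sup \Pg{}\Kg)^2 |\eta-\sigma|^2$, so an H\"older split together with Proposition~\ref{prop:boundonrate} and the Poisson count of jumps gives the same order.

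Finally, I would upgrade from pointwise in $x$ to the uniform-in-$x$ statement. Since $|\Le|\lesssim \gamma^{-2n}$, a union bound combined with the above pointwise estimate at a sufficiently large exponent $p'=p/(1-\kappa')$ gives
\[
\E\sup_{x\in\Le}\sup_{0\leq s\leq t}|U^{(i,j)}_{\gamma,t}(s,x)|^p \;\leq\; C \,\gamma^{p(1-\kappa)},
\]
after absorbing the polynomial $\gamma^{-2n}$ into the margin $\gamma^{-p\kappa'}$; letting $\kappa$ be slightly larger than $\kappa'$ gives the claim. The supremum in time is already provided by BDG. The main technical obstacle is the singularity of $\Pg{t-r}\Kg$ as $r\to t$, forcing one to split the integral at $t-r\sim\alpha$ and to track the polylogarithmic factors carefully; however, because the exponent in \eqref{e:cap2quadvardiffbound} is $p(1-\kappa)$ with $\kappa>0$ free, these factors are harmless.
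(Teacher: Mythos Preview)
Your proposal is correct and follows essentially the same route as the paper: apply Burkholder--Davis--Gundy to the pure-jump martingale $U^{(i,j)}_{\gamma,t}(\cdot,x)$, bound its predictable bracket by pulling out two factors of $\delta^{-1}\epsilon^2\Pg{t-r}\Kg$ in $L^\infty$ and controlling the remaining $L^2$ integral, and handle the jump term separately. Two small remarks: the paper avoids your time-splitting at $t-r\sim\alpha$ by invoking Lemma~\ref{lemma:est-cap2kernel3} directly (which already gives $\int_0^t\Norm{L^2(\Le)}{\Pg{t-r}\Kg}^2\,dr\lesssim\log(\gamma^{-1})$), and the fourth-moment conditional expectation is not literally uniformly bounded but only bounded in $L^q$ via Proposition~\ref{prop:boundonrate}, which the paper handles with a H\"older inequality exactly as you do for the jump term.
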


It is clear that the bound in \eqref{e:cap2quadvardiffbound} can be computed componentwise, and the unidimensional case is proven in \cite{MourratWeber}, where their only assumption used is the boundedness of the rate function of the jumps. It is sufficient in particular to prove it for the diagonal elements.

As a difference with the main reference \cite{MourratWeber}, we now propose a bound on the quadratic variation
\[
[ R^{(i)}_{\gamma,t}(\cdot,x), R^{(j)}_{\gamma,t}(\cdot,x)]_s
\]
that shows that each component of $ R_{\gamma,t}$ is asymptotically uncorrelated with the others. Lemma~\ref{lemma:Ubound} shows that it is sufficient to bound the predictable quadratic variation. 

We first define a new approximation of the diverging constant
\begin{multline}
\label{e:cgammatsdef}
\mathfrak{c}_{\gamma,t}(s) 
= 2\int_0^s \Norm{L^2(\Le)}{\Pg{t-r}}^2 dr \\
=   \frac{s}{2} +  \sum_{\substack{\omega \in \Z^2\\0 <|\omega|\leq \epsilon^{-1}}} 
	\frac{|\hat{\Kg}(\omega)|^2 
		e^{-2 |t-s|\epsilon^{-2}\gamma^2 (1- \hat{\Kg}(\omega))}}
		{4 \epsilon^{-2}\gamma^2(1 - \hat{\Kg}(\omega))} 
	\left(1 - e^{-2 s\epsilon^{-2}\gamma^2 (1- \hat{\Kg}(\omega))}\right)	\;.
\end{multline}

The next proposition, whose proof is postponed to the following subsection, is the key estimate behind Proposition~\ref{prop:hermiteRbound}.

\begin{proposition}
\label{prop:boundcrossvariation}
For $1 \leq i , j \leq m$, $b \in [0,1]$, $\gamma \in (0,\gamma_0)$ and $p \geq 1$ we have
\begin{equation}
\label{e:boundcrossvariation}
\begin{split}
&\E \left[ \sup_{x \in \Le}\sup_{0 \leq s \leq t} \left| \ang{ R^{(i)}_{\gamma,t}(\cdot,x), R^{(j)}_{\gamma,t}(\cdot,x)}_s - \mathfrak{c}_{\gamma,t}(s)\delta_{i,j}\right|^p \right]^{1/p} \\
&\leq C(\kappa,T,\nn,\nu,p) \gamma^{1- \nu - \kappa} + C(p,b,\kappa)   (1 \wedge t^{-b} \alpha^{b}) \gamma^{-\kappa}
\end{split}
\end{equation}
\end{proposition}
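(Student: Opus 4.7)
The plan is to isolate the leading contribution $\delta_{i,j}\mathfrak{c}_{\gamma,t}(s)$ in the quadratic variation and bound the remainder. Setting $z_1=z_2=x$ in the expression for $\ang{R^{(i)}_{\gamma,t}(\cdot,x),R^{(j)}_{\gamma,t}(\cdot,x)}_s$ from \eqref{e:cap2quadrvarlin2} and inserting the expansion $Q_{\nn}^{i,j}(r,z) = \delta_{i,j} + \sigma^{(i)}_{\epsilon^{-1}z}(\alpha^{-1}r)\sigma^{(j)}_{\epsilon^{-1}z}(\alpha^{-1}r) + \mathrm{err}_\gamma$ from \eqref{e:ppowersdefm}, I would write
\[
\ang{R^{(i)}_{\gamma,t}(\cdot,x),R^{(j)}_{\gamma,t}(\cdot,x)}_s - \delta_{i,j}\mathfrak{c}_{\gamma,t}(s) \;=\; J(s,x) + E(s,x),
\]
where
\[
J(s,x) \eqdef \sum_{z\in\Le}\epsilon^{2}\!\int_0^s F(r,z,x)\,N^{i,j}(r,z)\,dr,\qquad F(r,z,x) \eqdef (\Pg{t-r}\Kg)^{2}(z-x),
\]
and $N^{i,j}(r,z) \eqdef \sigma^{(i)}_{\epsilon^{-1}z}(\alpha^{-1}r)\sigma^{(j)}_{\epsilon^{-1}z}(\alpha^{-1}r) - \delta_{i,j}$. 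The cancellation of the $\delta_{i,j}$ part relies on the Parseval identity $\sum_z\epsilon^{2}\int_0^s F(r,z,x)\,dr = \tfrac{1}{2}\mathfrak{c}_{\gamma,t}(s)$, which one reads off \eqref{e:cgammatsdef}. The term $E(s,x)$ collects the $\mathrm{err}_\gamma$ contribution and is controlled in any $L^p$ by $C\gamma^{1-\nu-\kappa}$ via Proposition~\ref{prop:boundonrate}, \eqref{e:hp2bis}, and $\mathfrak{c}_{\gamma,t}(s)\lesssim\log(\gamma^{-1})$.

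The key observation for $J(s,x)$ is that at each fixed site $z$, the process $r\mapsto\sigma^{(i)}_{\epsilon^{-1}z}(\alpha^{-1}r)\sigma^{(j)}_{\epsilon^{-1}z}(\alpha^{-1}r)$ is a jump process with macroscopic rate $\alpha^{-1}$ whose post-jump law $\pgm{z,r}$ is, by construction of the stopped dynamics and by isotropy of $\nug$ together with $\int|\eta|^2\nug(d\eta)=m$, within $O(\gamma^{1-\nu})$ of $\nug$ in total variation; in particular $\int\eta^{(i)}\eta^{(j)}\pgm{z,r}(d\eta) = \delta_{i,j} + O(\gamma^{1-\nu})$. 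Applying Dynkin's formula to $N^{i,j}(\cdot,z)$ with the single-site generator $L_z$ yields $L_z N^{i,j}(r,z) = -\alpha^{-1}N^{i,j}(r,z) + O_{L^p}(\alpha^{-1}\gamma^{1-\nu})$, and an integrating-factor argument produces the pointwise-in-$z$ decomposition
\[
N^{i,j}(r,z) \;=\; e^{-\alpha^{-1}r}\,N^{i,j}(0,z) \;+\; \int_0^r e^{-\alpha^{-1}(r-u)}\,dM^N(u,z) \;+\; R_{\gamma}(r,z),
\]
where $|R_\gamma| = O_{L^p}(\gamma^{1-\nu})$ uniformly and $M^{N}(\cdot,z)$ is the compensated jump martingale, with $d\ang{M^{N}(\cdot,z)}_u \lesssim \alpha^{-1}\,du$ in $L^p$ by Proposition~\ref{prop:boundonrate} and \eqref{e:hp2bis}.

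Substituting this decomposition into $J(s,x)$ splits it into three contributions. The initial-data piece $\sum_z\epsilon^{2} N^{i,j}(0,z) \int_0^s F(r,z,x)e^{-\alpha^{-1}r}\,dr$ is handled via \eqref{e:hp2bis} together with the central kernel estimate
\[
\int_0^s \Norm{L^{2}(\Le)}{\Pg{t-r}\Kg}^{2}\,e^{-\alpha^{-1}r}\,dr \;\lesssim\; (1\wedge\alpha^{b}t^{-b})\,\gamma^{-\kappa},
\]
obtained from $e^{-\alpha^{-1}r}\leq(\alpha/r)^{b}$ combined with the Bernstein-type bound $\Norm{L^{2}(\Le)}{\Pg{u}\Kg}^{2} \lesssim 1/(u+\alpha)$; the $\log(\gamma^{-1})$ picked up along the way is absorbed into $\gamma^{-\kappa}$. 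The martingale piece is reordered by Fubini into $\sum_z\epsilon^{2}\int_0^s \Psi(u,z,x)\,dM^{N}(u,z)$ with $\Psi(u,z,x)\eqdef\int_u^s F(r,z,x) e^{-\alpha^{-1}(r-u)}\,dr$, then bounded by Burkholder--Davis--Gundy, using $\Psi\leq\int_u^s F(r,z,x)\,dr$ in one factor and the central estimate above in the other to produce the same $(1\wedge\alpha^{b}t^{-b})\gamma^{-\kappa}$ bound. The systematic piece involving $R_{\gamma}$ integrates to $O(\gamma^{1-\nu}\mathfrak{c}_{\gamma,t}(s))\leq C\gamma^{1-\nu-\kappa}$. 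Doob's inequality absorbs the supremum in $s$ through the martingale decomposition, and the supremum in $x\in\Le$ is absorbed by choosing $p$ large enough that the crude union-bound factor $|\Le|^{1/p}=\gamma^{-2n/p}$ is dominated by $\gamma^{-\kappa}$.

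The hard part is the sharp quantitative control of the interplay between the exponential decay $e^{-\alpha^{-1}r}$, which encodes single-spin mixing on the macroscopic timescale $\alpha$, and the singular spatial--temporal weight $F(r,z,x)=(\Pg{t-r}\Kg)^{2}(z-x)$, which peaks as $r\to t$ and $z\to x$ and carries the logarithmic divergence of $\mathfrak{c}_{\gamma,t}$. Tracking this balance precisely is what produces the factor $(1\wedge\alpha^{b}t^{-b})\gamma^{-\kappa}$, trivial in the regime $t\lesssim\alpha$ and genuinely sharp for $t\gg\alpha$.
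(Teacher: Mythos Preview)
Your approach is genuinely different from the paper's and essentially correct. The paper does not use a Dynkin/integrating-factor decomposition; instead it constructs an explicit \emph{coupling} with the Glauber dynamic at infinite temperature $\beta=0$, started from the zero configuration. In that auxiliary process $\tilde\sigma$ the spins are i.i.d.\ draws from $\nu_\gamma$, so the $p$-th moment of the analogue of your $J(s,x)$ is computed directly by expanding the product and using independence together with $\E[\tilde\sigma^{(i)}\tilde\sigma^{(j)}-\delta_{i,j}]=0$, which restricts the sum to partitions of $\{1,\dots,p\}$ without singletons. The two routes are parallel in spirit: your initial-condition term $e^{-\alpha^{-1}r}N^{i,j}(0,z)$ corresponds exactly to the paper's ``no jump yet'' contribution $1_{\{t\le T_0\}}$ in the coupling (this is the source of the $(1\wedge(\alpha/t)^b)$ factor in both), your $R_\gamma$ corresponds to the coupling discrepancy $|f(\sigma)-f(\tilde\sigma)|=O(\gamma^{1-\nu})$, and your martingale piece plays the role of the independent auxiliary process. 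Your route is arguably cleaner in that it avoids introducing an auxiliary process; the paper's route is more hands-on but makes the independence structure completely explicit.

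Two points deserve attention. First, your claim that ``Doob's inequality absorbs the supremum in $s$'' for the martingale piece is too quick: after the stochastic Fubini, the integrand $\Psi_s(u,z,x)$ depends on $s$, so $s\mapsto\sum_z\epsilon^2\int_0^s\Psi_s\,dM^N$ is \emph{not} a martingale. The paper handles the supremum in $s$ by discretising $[0,t]$ with mesh $\gamma^R$ and union-bounding, together with a crude Lipschitz estimate between mesh points; the same device works in your setting. Second, your attribution of the martingale piece to the $(1\wedge(\alpha/t)^b)\gamma^{-\kappa}$ term is inaccurate: a direct BDG computation of its quadratic variation gives $\alpha^{-1}\sum_z\epsilon^4\int_0^s\Psi^2\,du\lesssim\alpha\,\epsilon^2\int_0^t\Norm{L^4(\Le)}{\Pg{t-r}\Kg}^4\,dr\lesssim\gamma^{2-\kappa}$, so this piece is in fact absorbed by the $\gamma^{1-\nu-\kappa}$ term. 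This does not affect the final bound.
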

\begin{remark}
The main difference between Proposition~\ref{prop:boundcrossvariation} and \cite[Prop.~3.4]{ShenWeber} is that in the latter the bounds on the error gets worse as $p$ grows, while in \eqref{e:boundcrossvariation} the power of $\gamma$ in the right-hand-side doesn't depend on $p$. Such a result is more convenient in our case when the degree of the Wick polynomial is arbitrary large, since the errors containing the renormalization constants diverge as a arbitrarily large power of $\log(\gamma^{-1})$.
\end{remark}

With the same proof it is possible to show the following lemma.

\begin{lemma}
\label{lemma:boundforlinearconv}
For $0 \leq i,j \leq m$, $\kappa > 0$, $\nu > 0$, and $p > 1$, $0 \leq t \leq T$ and for $\phi \in \Cc\left( \T^2 \times [0,T] , \R^m \right)$ there exists a constant $C=C(\nn,\kappa,\nu,p,T)$
\begin{multline}
\E \left[\left| \int_0^t \sum_{z \in \Le} \phi^{(i)}(z,s)\phi^{(j)}(z,s) \Qm^{i,j}(s^-,z) ds -   2 \delta_{i,j}  \int_0^t\ang{\phi^{(i)}(\cdot,s),\phi^{(j)}(\cdot,s)}_{\T^2}ds \right|^p \right] \\
\leq C\gamma^{1-\kappa - \nu} \int_0^t \Norm{L^p(\T^2)}{\phi^{(i)}(\cdot,s) \phi^{(j)}(\cdot,s) }^p ds +   C \alpha \E[\Norm{L^2(\Le)}{\sigma(0)}^2] \Norm{L^{\infty}(\T^2 \times [0,T])}{\phi^{(i)}\phi^{(j)}}^p\:.
\label{e:eqHermitecov}
\end{multline}
\end{lemma}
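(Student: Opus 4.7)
The plan is to mimic the proof of Proposition~\ref{prop:boundcrossvariation}, which the author states applies here with only minor modifications. Using the decomposition \eqref{e:ppowersdefm},
\[
\Qm^{i,j}(s^-,z) = \delta_{i,j} + \sigma_{\epsilon^{-1}z}^{(i)}(\alpha^{-1}s^-)\sigma_{\epsilon^{-1}z}^{(j)}(\alpha^{-1}s^-) + err_\gamma(\nn,\sigma,z,s^-),
\]
I would split the quantity inside the $p$-th moment into three parts, each compared against the appropriate piece of $2\delta_{i,j}\int_0^t \ang{\phi^{(i)},\phi^{(j)}}_{\T^2}ds$. The $err_\gamma$ contribution is controlled directly by Proposition~\ref{prop:boundonrate} together with assumption~\eqref{e:hp2bis}, after applying H\"older to isolate the spin-moment factor from the test function; this produces the $\gamma^{1-\kappa-\nu}\int_0^t \Norm{L^p(\T^2)}{\phi^{(i)}\phi^{(j)}}^p ds$ contribution. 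The purely deterministic $\delta_{i,j}$ piece is a Riemann-sum approximation of $\delta_{i,j}\int_0^t \ang{\phi^{(i)},\phi^{(j)}}_{\T^2}ds$ (the missing $\epsilon^2$ being implicit in the sum), whose discretization error is much smaller than the terms already on the right-hand side and hence negligible.

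The core of the argument is the spin-correlation piece, where one must show
\[
\int_0^t \sum_{z \in \Le} \epsilon^2\, \phi^{(i)}(z,s)\phi^{(j)}(z,s)\, \sigma_{\epsilon^{-1}z}^{(i)}(\alpha^{-1}s^-)\sigma_{\epsilon^{-1}z}^{(j)}(\alpha^{-1}s^-)\, ds \approx \delta_{i,j}\int_0^t \ang{\phi^{(i)},\phi^{(j)}}_{\T^2}\, ds.
\]
At each site $z$ let $T_0(z)$ denote the first macroscopic time its Poisson clock rings. For $s < T_0(z)$ the spin is frozen at its initial value, and since $\P(T_0(z)>s)=e^{-\alpha^{-1}s}$ one has $\int_0^t \P(T_0(z)>s)\,ds \leq \alpha$. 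Summing over $z$ and using \eqref{e:hp2bis} produces exactly the $\alpha\, \E[\Norm{L^2(\Le)}{\sigma(0)}^2]\Norm{L^\infty(\T^2\times[0,T])}{\phi^{(i)}\phi^{(j)}}^p$ term on the right-hand side. After $T_0(z)$, the current value of $\sigma_z$ was sampled from $\pgm{z,\cdot}$, which under the stopping time $\taun$ has Radon--Nikodym derivative with respect to $\nug$ of the form $1+O(\gamma^{1-\nu})$; combined with isotropy of $\nug$ and the normalization $\int_S |\eta|^2 \nug(d\eta)=m$, this gives the conditional identity $\E[\sigma_z^{(i)}\sigma_z^{(j)}\mid \mathcal{F}_{s^-}] = \delta_{i,j}+O(\gamma^{1-\nu})$.

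The fluctuations around this conditional mean form a pure-jump martingale whose jumps at different sites are conditionally independent (they are driven by independent Poisson clocks) and whose predictable quadratic variation is bounded uniformly in $p$ by Proposition~\ref{prop:boundonrate}. A Burkholder--Davis--Gundy estimate, followed by H\"older as in Proposition~\ref{prop:RroughBounds}, converts this into the $p$-th moment bound $\gamma^{1-\kappa-\nu}\int_0^t \Norm{L^p(\T^2)}{\phi^{(i)}\phi^{(j)}}^p ds$. The main obstacle, as emphasized in the remark after Proposition~\ref{prop:boundcrossvariation}, is to prevent any $p$-dependent power of $\gamma$ from leaking into the final estimate; this is achieved by using H\"older to factor out the $\Qm^{i,j}$-moment piece (uniformly bounded in $p$ thanks to assumption~\eqref{e:hpexponentialmoment}) before applying BDG to the martingale increments, instead of letting the $L^p$ norm of the jump size feed back into the estimate.
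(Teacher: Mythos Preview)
Your decomposition into the $err_\gamma$ piece, the deterministic $\delta_{i,j}$ Riemann-sum piece, and the spin-correlation piece is correct and matches the paper's strategy, as does your treatment of the initial-condition contribution via the first jump time $T_0(z)$. The gap is in the fluctuation estimate.

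The quantity
\[
s \;\longmapsto\; \int_0^s \sum_{z\in\Le}\epsilon^2\,\phi^{(i)}(z,r)\phi^{(j)}(z,r)\bigl(\sigma_{\epsilon^{-1}z}^{(i)}(\alpha^{-1}r)\sigma_{\epsilon^{-1}z}^{(j)}(\alpha^{-1}r)-\delta_{i,j}\bigr)\,dr
\]
is an absolutely continuous finite-variation process, not a martingale: the spin product $\sigma_z^{(i)}\sigma_z^{(j)}$ is piecewise constant in $r$, and integrating it in $dr$ produces no stochastic-integral structure for Burkholder--Davis--Gundy to act on. Subtracting a conditional mean does not help, because the compensator of a finite-variation process is the process itself. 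This is structurally different from Proposition~\ref{prop:RroughBounds}, where $R_{\gamma,t}^{:\kb:}$ is a genuine stochastic integral against $dM_\gamma$.

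The paper's route (the ``same proof'' as Proposition~\ref{prop:boundcrossvariation}) is to first couple $\sigma$ to the infinite-temperature process $\tilde\sigma$ started from zero, absorbing the coupling error into the $\gamma^{1-\nu-\kappa}$ term, and then for $\tilde\sigma$ write the time integral as a sum over sites $z$ and jump intervals $[\tau_l(z),\tau_{l+1}(z))$ of independent mean-zero blocks. The $p$-th moment is then expanded as a sum over $p$-tuples and organised by partitions of $\{1,\dots,p\}$: any partition containing a singleton vanishes by independence and centering, and the remaining partitions are bounded combinatorially using $\|\Pg{t-r}\Kg\|_{L^\infty}$, $\|\Pg{t-r}\Kg\|_{L^2}^2$ (here replaced by $\phi^{(i)}\phi^{(j)}$), and moments of the inter-jump times $\tau_{l+1}-\tau_l\sim\alpha$. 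This is what forces the estimate to be uniform in $p$. Your Radon--Nikodym observation is the same information as the coupling bound, but you still need the partition-expansion (or a Rosenthal-type inequality for sums of independent variables) rather than BDG to close the argument.
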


We conclude the section with a proposition that simplifies the expressions for the Hermite polynomial approximating the iterated integrals $\Rg$, with the replacement of the covariance of the process $\Rg$, with its limiting value. We recall the definition of the constant introduced in \eqref{e:cgammadef} and we define the values of $\CG,\CG(t)$, for future reference

\begin{align}
\label{e:cgammatdef}
\CG(t) &= \frac{t}{2} + \sum_{\substack{\omega \in \Z^2\\0 <|\omega|\leq \epsilon^{-1}}} \frac{|\hat{\Kg}(\omega)|^2}{4 \epsilon^{-2}\gamma^2(1 - \hat{\Kg}(\omega))} \left(1 - e^{-2 t\epsilon^{-2}\gamma^2 (1- \hat{\Kg}(\omega))}\right) \\
\label{e:cgammadef}
\CG &=   \sum_{\substack{\omega \in \Z^2\\0 < |\omega|\leq \epsilon^{-1}}} \frac{|\hat{\Kg}(\omega)|^2}{4 \epsilon^{-2}\gamma^2(1 - \hat{\Kg}(\omega))}
\end{align}

Together with Proposition~\ref{prop:cap2boundonHermiteerror}, we have the main result of the section:

\begin{proposition}
\label{prop:hermiteRbound}
For a multiindex $\kb \in \N^m$, $\kappa>0$, $\nu > 0, p \geq 1$ and $b \in [0,1]$, under the assumption \eqref{e:hp2bis}
\begin{equation}
\label{e:Hermitepolywithc}
\begin{split}
&\E \left[ \sup_{\substack{z \in \Le}} \sup_{0 \leq s \leq t} \left| H_{\kb}(\Rg(s,z),\mathfrak{c}_{\gamma,t}(s)I_m) - \Rg^{:\kb:}(s,z)\right|^p\right]^{\frac{1}{p}} \\
&\leq C(\kappa,T,\nn,\nu,\kb,b) \left(\gamma^{(1- \nu - \kappa)} + \gamma^{-\kappa }  (1 \wedge t^{-b } \alpha^{b })\right)
\end{split}\;.
\end{equation}
\end{proposition}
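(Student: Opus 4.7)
The plan is to prove this by a triangle inequality argument that interpolates between the two quantities via two intermediate Hermite polynomials. Namely, write
\begin{equs}
H_{\kb}(\Rg(s,z),\mathfrak{c}_{\gamma,t}(s)I_m) - \Rg^{:\kb:}(s,z)
&= \bigl[H_{\kb}(\Rg(s,z),\mathfrak{c}_{\gamma,t}(s)I_m) - H_{\kb}(\Rg(s,z),[\Rg(\cdot,z)]_s)\bigr] \\
&\quad + E^{:\kb:}_{\gamma,t}(s,z)\;,
\end{equs}
where $E^{:\kb:}_{\gamma,t}$ is as in \eqref{e:cap2Eproc}. The second summand is controlled by Proposition~\ref{prop:cap2boundonHermiteerror}, yielding a bound of order $\gamma^{1-\kappa}$, which is strictly better than what is claimed.

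For the first summand, I would apply the fundamental theorem of calculus along the linear interpolation $T_\theta \eqdef \theta\, [\Rg(\cdot,z)]_s + (1-\theta)\,\mathfrak{c}_{\gamma,t}(s)I_m$ between the two covariance-type arguments, obtaining
\begin{equation*}
H_{\kb}(\Rg,[\Rg]_s) - H_{\kb}(\Rg,\mathfrak{c}_{\gamma,t}(s)I_m) = \sum_{i,j=1}^m \bigl([\Rg^{(i)},\Rg^{(j)}]_s - \mathfrak{c}_{\gamma,t}(s)\delta_{i,j}\bigr) \int_0^1 \partial_{T_{i,j}}H_{\kb}(\Rg,T_\theta)\,d\theta\;.
\end{equation*}
By the derivative identities recalled in Subsection~\ref{subsec:hermite}, each $\partial_{T_{i,j}}H_{\kb}(\Rg,T_\theta)$ is, up to a combinatorial factor, another Hermite polynomial $H_{\kb-e_i-e_j}(\Rg,T_\theta)$; hence a polynomial in the entries of $\Rg$ of degree $|\kb|-2$ with coefficients polynomial in the entries of $T_\theta$.

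The prefactor is split further via
$[\Rg^{(i)},\Rg^{(j)}]_s - \mathfrak{c}_{\gamma,t}(s)\delta_{i,j} = U_{\gamma,t}^{(i,j)}(s,z) + \bigl(\ang{\Rg^{(i)},\Rg^{(j)}}_s - \mathfrak{c}_{\gamma,t}(s)\delta_{i,j}\bigr)$, whose two terms are controlled by Lemma~\ref{lemma:Ubound} and Proposition~\ref{prop:boundcrossvariation} respectively. This produces the target rate $\gamma^{1-\nu-\kappa} + (1\wedge t^{-b}\alpha^b)\gamma^{-\kappa}$ in $L^p$. The size of the integrand $\partial_{T_{i,j}}H_{\kb}(\Rg,T_\theta)$ can be bounded uniformly in $z,s,\theta$ by a polylog factor: the uniform bound $|\mathfrak{c}_{\gamma,t}(s)|\lesssim \log(\gamma^{-1})$ and the analogous control on $\ang{\Rg^{(i)},\Rg^{(j)}}_s$ (implicit in the proof of Proposition~\ref{prop:boundcrossvariation}) show that the entries of $T_\theta$ are $\Oo(\log(\gamma^{-1}))$ up to a polynomially small correction, while Remark~\ref{remark:puntualbound} combined with the definition \eqref{e:Rpowersdef} gives $\E\sup_{s\leq t,\,z\in\Le}|\Rg(s,z)|^q \lesssim_q \log^{q/2}(\gamma^{-1})$. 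Applying H\"older's inequality then yields a product of the desired $L^p$ bound times a polylogarithmic factor, and this factor is absorbed into $\gamma^{-\kappa}$ by taking a slightly smaller $\kappa$ in the application of Lemma~\ref{lemma:Ubound} and Proposition~\ref{prop:boundcrossvariation}.

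The main obstacle is the uniform-in-$(z,s)$ control of the Hermite-polynomial prefactor, since the derivative $\partial_{T_{i,j}}H_{\kb}(\Rg,T_\theta)$ is evaluated at a \emph{random} covariance matrix $T_\theta$ that is not a priori bounded. This is resolved by exploiting the cancellations already used to prove Lemma~\ref{lemma:Ubound} and Proposition~\ref{prop:boundcrossvariation}, which give pointwise polylogarithmic bounds on $[\Rg]_s$, so the whole argument reduces to polynomial manipulations that the stopping-time construction in Subsection~\ref{subsec:stopping time} makes harmless.
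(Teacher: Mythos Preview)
Your proposal is correct and follows essentially the same route as the paper. Both arguments split off $E^{:\kb:}_{\gamma,t}$ via Proposition~\ref{prop:cap2boundonHermiteerror}, reduce to the difference of Hermite polynomials in the covariance argument, and control the resulting prefactor $[\Rg]_s - \mathfrak{c}_{\gamma,t}(s)I_m$ through Lemma~\ref{lemma:Ubound} and Proposition~\ref{prop:boundcrossvariation}.

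The only minor variation is how you handle the Hermite-polynomial coefficients. You interpolate linearly in $T$ and bound $\partial_{T_{i,j}}H_{\kb}(\Rg,T_\theta)$ by polylogarithmic control of $|\Rg|$ and of the entries of $T_\theta$. The paper instead Taylor-expands fully at the endpoint $[\Rg]_s$, so the coefficients are $H_{\mathbf a}(\Rg,[\Rg]_s)$ with $\mathbf a\le\kb$, and then uses the identity $H_{\mathbf a}(\Rg,[\Rg]_s)=\Rg^{:\mathbf a:}+E^{:\mathbf a:}_{\gamma,t}$ together with Proposition~\ref{prop:cap2boundsonR} and Lemma~\ref{lemma:infinitybesovbound}. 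This has the small advantage that the $L^\infty$-over-$z$ bound on the coefficients comes for free from already established $\Cc^{-\nu}$ bounds, whereas in your version the sup over $z\in\Le$ in Remark~\ref{remark:puntualbound} has to be recovered by the usual trick of summing and taking a large moment, costing an extra $\gamma^{-\kappa}$. Either way the outcome is the same.
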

The proof of the above proposition is given at the end of Subsection~\ref{subsec-tightnessproof}.

\subsection{Convergence of the linearized dynamic}
\label{subsec:convergence-linear}

Recall the definition of $\Zg$ given in Subsection~\ref{subsec:limitingspde} and the tightness of their laws proved in Proposition~\ref{prop:Ztightness}. For $\nu > 0$, we assume in this subsection, that the limit $\gamma \to 0$ is taken along to a fixed converging subsequence of $\Zg$.\\
In this section we show that any limit law solves a martingale problem, more precisely we will use the fact, that the law of the stochastic heat equation is the only solution of a martingale problem (see \cite[App.~C]{MourratWeber}).\\
The next result has been proven in \cite[Theorem~6.1]{MourratWeber}, the extension to vector-valued processes being straightforward.
\begin{theorem}
\label{cap2linearconvergence}
Let $\nu >0$, The law of the processes $\Zg$ as $\gamma \to 0$, converge to the law of $Z$, the solution of the multivariate stochastic heat equation 
\begin{equation}
\label{Cap2Heateq}
\begin{cases}
\partial_t Z_t  &= \Delta Z_t + \sqrt{2} d W_t\\
Z_0 &\equiv 0
\end{cases}
\end{equation}
in the topology of $\D\left( [0,T]; \Cc^{-\nu}(\T^2,\R^m) \right)$.\\
Here $W$ is a $n$-component noise $(W^{(1)},\dots,W^{(n)})$ and each of the components is an independent space time white noise on $L^2([0,T] \times \T^d)$.
\end{theorem}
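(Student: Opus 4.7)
Since $\{\Zg\}_{\gamma \in (0,\gamma_0)}$ is tight in $\D([0,T]; \Cc^{-\nu}(\T^2, \R^m))$ by Proposition~\ref{prop:Ztightness}, it suffices to extract any subsequential limit $Z$ and identify its law with that of the SHE~\eqref{Cap2Heateq}. My plan is to use the martingale problem characterization of the SHE from \cite[App.~C]{MourratWeber}: one must show that for every smooth test function $\varphi \in \Cc^\infty(\T^2;\R^m)$, the limit process satisfies that
\[
M_t^\varphi \eqdef \ang{Z_t, \varphi}_{L^2(\T^2)} - \int_0^t \ang{Z_s, \Delta \varphi}_{L^2(\T^2)}\, ds
\]
is a continuous square integrable martingale whose componentwise predictable brackets equal $\ang{M_{\cdot}^{\varphi^{(i)}}, M_{\cdot}^{\varphi^{(j)}}}_t = 2 \delta_{i,j}\, t\, \ang{\varphi^{(i)}, \varphi^{(j)}}_{L^2(\T^2)}$.

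At the discrete level, by \eqref{e:SHE:zeroinit}, the process
\[
M_{\gamma,t}^\varphi \eqdef \ang{\Zg(t,\cdot), \varphi}_{\Le} - \int_0^t \ang{\Zg(s,\cdot), \Dg \varphi}_{\Le}\, ds = \ang{\Mg(t,\cdot), \varphi}_{\Le}
\]
is a martingale. The first step is then passage to the limit in $M_{\gamma,t}^\varphi$: the discrete Laplacian $\Dg \varphi$ converges uniformly to $\Delta \varphi$ for smooth $\varphi$ by the consistency conditions \eqref{e:norm-kk} on the kernel $\Kg$, while the uniform bounds on $\Zg$ in $\Cc^{-\nu}$ provided by Proposition~\ref{prop:cap2boundsonR} and Remark~\ref{cap2remarkboundsonZ} supply the uniform integrability needed to pass to the limit in the drift $\int_0^t \ang{\Zg(s,\cdot), \Dg \varphi}_{\Le}\, ds$ jointly with $\ang{\Zg(t,\cdot),\varphi}_{\Le}$ along the chosen subsequence.

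The second step is to identify the bracket. The predictable quadratic covariation of $\ang{\Mg, \varphi^{(i)}}_{\Le}$ with $\ang{\Mg, \varphi^{(j)}}_{\Le}$, rescaled to macroscopic variables via \eqref{e:microquadvar}, is a Riemann-sum discretisation of
\[
\int_0^t \sum_{z \in \Le} \epsilon^2\, \varphi^{(i)}(z)\varphi^{(j)}(z)\, \Qm^{i,j}(s^-,z)\, ds,
\]
and Lemma~\ref{lemma:boundforlinearconv} shows that this converges in $L^p$ to $2 \delta_{i,j}\, t\, \ang{\varphi^{(i)}, \varphi^{(j)}}_{L^2(\T^2)}$. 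This simultaneously delivers the correct variance and the asymptotic orthogonality of the $m$ components, which is a crucial feature of the multivariate setting. Continuity of the limit martingale follows from the smallness of the jumps of $\ang{\Mg, \varphi}_{\Le}$: each one is bounded by $\epsilon^2 \delta^{-1} \Norm{\infty}{\varphi}$ times a single spin increment, which tends to zero uniformly in $L^p$ thanks to the scaling \eqref{e:scaling} and the moment control of Proposition~\ref{prop:boundonrate}.

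The main obstacle I expect is the usual one: neither the martingale property nor the identification of the bracket is automatic under weak convergence in the Skorokhod topology. The standard workaround is to test the identity $\E[(M_t^\varphi - M_s^\varphi) F(Z_{[0,s]})] = 0$ against bounded continuous functionals $F$ of the path on $[0,s]$, transferring the identity to the limit using the continuous mapping theorem together with the uniform moment estimates \eqref{cap2tightextimation}; a parallel argument with $F$ replaced by functionals tailored to $(M_t^\varphi)^2 - \ang{M^\varphi}_t$ handles the bracket. Once both the martingale property and the covariation structure are verified in the limit, uniqueness of the martingale problem for the SHE identifies the law of $Z$, and since every subsequential limit agrees, the whole family $\Zg$ converges in distribution to $Z$, giving the theorem.
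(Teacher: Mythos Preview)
Your proposal is correct and follows essentially the same route as the paper, which simply refers to \cite[Theorem~6.1]{MourratWeber} and notes that the only new ingredient in the multivariate, unbounded-spin setting is the identification of the cross bracket via Lemma~\ref{lemma:boundforlinearconv} together with assumption \eqref{e:hp2bis}. The one cosmetic slip is that the predictable covariation of $\ang{\Mg,\varphi^{(i)}}_{\Le}$ involves $(\Kg\aste\varphi^{(i)})(z)$ rather than $\varphi^{(i)}(z)$, but since $\Kg\aste\varphi\to\varphi$ uniformly for smooth $\varphi$ this is absorbed by Lemma~\ref{lemma:boundforlinearconv} without change to the argument.
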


The proof is identical to \cite[Theorem~6.1]{MourratWeber}. The only new part is the estimation of the quadratic covariation via Lemma~\ref{lemma:boundforlinearconv} and the assumption \eqref{e:hp2bis}.

We are now ready to state the main result of the section. 

\begin{theorem}
\label{thm:Zsconvergence}
For any $\nn$ and $k \geq 1$, the processes $(\Zg^{\kb})_{|\kb| \leq k}$ converge jointly in law to $(Z^{\kb})_{|\kb| \leq k}$ in the topology of $\mathcal{D}(\R^+,(\Cc^{-\nu})^K)$ where $K = \binom{k + m -1}{m-1}$ 
\end{theorem}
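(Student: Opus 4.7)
The plan is to combine three ingredients already assembled in the preceding sections: joint tightness from Proposition~\ref{prop:Ztightness}, convergence of the linear component from Theorem~\ref{cap2linearconvergence}, and the Hermite approximation identity from Proposition~\ref{prop:hermiteRbound}. First I would observe that tightness of the full tuple $(\Zg^{:\kb:})_{|\kb|\leq k}$ in $\mathcal{D}(\R^+,(\Cc^{-\nu})^K)$ follows immediately from componentwise tightness, since $K$ is finite and the product Skorokhod topology is generated by the coordinate projections. I would extract a subsequence $\gamma_n\to 0$ along which the tuple converges jointly in law to some limit $(Y^{\kb})_{|\kb|\leq k}$, and by the Skorokhod representation theorem assume the convergence is almost sure.

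\textbf{Identification of the limit.} For $|\kb|=1$ the first-order components are pinned down by Theorem~\ref{cap2linearconvergence}, which gives $Y^{\kb}=Z^{\kb}$, the corresponding component of the multivariate SHE. For $|\kb|\geq 2$ I would invoke Proposition~\ref{prop:hermiteRbound}, namely
\[
\Rg^{:\kb:}(s,x) = H_\kb\bigl(\Rg(s,x),\mathfrak{c}_{\gamma,t}(s)I_m\bigr) + O(\gamma^{1-\kappa})
\]
uniformly in $s\leq t$ and $x\in\Le$, and pass $s\nearrow t$ to identify $\Zg^{:\kb:}(t,\cdot)$, up to a vanishing error, with the Hermite polynomial of $\Zg(t,\cdot)$ and the deterministic scalar covariance $\mathfrak{c}_{\gamma,t}(t)I_m$. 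To match this with the continuum Wick power I would compare with the mollified continuum field $Z_\epsilon$ through the extension operator $\Ex$ at the matching scale: the identity $Z_\epsilon^{:\kb:}=H_\kb(Z_\epsilon,\mathfrak{c}_\epsilon(t)I_m)$ holds literally since $Z_\epsilon$ is smooth, Proposition~\ref{cap2linearconvcontinuous} gives $Z_\epsilon^{:\kb:}\to Z^{:\kb:}$ in $\Cc^{-\nu}$, and by direct inspection of \eqref{e:cgammatdef} the renormalization constants $\mathfrak{c}_{\gamma,t}(t)$ converge to $\mathfrak{c}(t)$. Combining these with the almost-sure convergence $\Zg\to Z$ from Step~1, one identifies $Y^{\kb}=Z^{:\kb:}$. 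Since the limit is uniquely characterized, the whole sequence converges.

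\textbf{Main obstacle.} The delicate point is passing through the Hermite composition in the identification step. Because $Z(t,\cdot)$ lives only in $\Cc^{-\nu}$, the object $H_\kb(Z,\mathfrak{c}(t)I_m)$ has no direct meaning, and one must close the diagram between the discrete approximation $\Zg$ and the continuum approximation $Z_\epsilon$ of matching scale rather than trying to pass to a limit in the polynomial identity directly. An additional subtlety, specific to the multivariate setting, is that the diagonal form of the limiting covariance matrix -- which is what allows the factorization $H_\kb(\cdot,\mathfrak{c}I_m)=\prod_j H_{k_j}(\cdot,\mathfrak{c})$ from \eqref{cap2multdimorthermite} and identifies $Y^{\kb}$ as the componentwise Wick product of the $Z^{(j)}$ -- rests on the asymptotic independence of the $m$ martingale components of $\Rg$. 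This is the content of Proposition~\ref{prop:boundcrossvariation}, and is the reason the latter has been strengthened beyond the scalar version appearing in \cite{ShenWeber}.
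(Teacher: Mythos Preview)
Your proposal is correct and follows essentially the same route as the paper, which simply refers to \cite[Theorem~6.2]{MourratWeber} and \cite[Prop.~4.5]{ShenWeber} and names Proposition~\ref{prop:hermiteRbound} as the key input: joint tightness from Proposition~\ref{prop:Ztightness}, identification of the first-order component via Theorem~\ref{cap2linearconvergence}, and identification of the higher powers through the Hermite approximation, with Proposition~\ref{prop:boundcrossvariation} supplying the diagonal covariance needed in the multivariate case. One small wording issue: the constants $\mathfrak{c}_{\gamma,t}(t)=\CG(t)$ and $\mathfrak{c}_\epsilon(t)$ both diverge, so what you need (and what \cite[Lemma~7.1]{MourratWeber} gives) is that their \emph{difference} at the matching scale $\epsilon=\epsilon(\gamma)$ stays bounded, not that either converges.
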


The proof of the above theorem is essentially the same as the proof of theorem 6.2 in \cite{MourratWeber} or proposition 4.5 in \cite{ShenWeber}, and it is based on the approximation $\Rg$ of $\Zg$ and Proposition~\ref{prop:hermiteRbound}.

\subsection{Proofs of the statements}
\label{subsec-tightnessproof}
The aim of this section is to show that the iterated integrals of the process $\Rg(s,x)$ are a good approximation for the Hermite polynomial.

\begin{lemma}
\label{lemma:Rjumpsbound}
Recall that the Glauber dynamic is stopped as prescribed in Subsection~\ref{subsec:stopping time}. For any $p \geq 1$, $x \in \Le$ and $\kappa > 0$
\begin{equation}
\label{e:Rjumpbound}
\E \left[ \sup_{0 \leq r \leq t} \sup_{x \in \Le}|\Delta_r R^{(j)}_{\gamma,t}(\cdot,x)|^{p}\right]^{1/p} \leq C(p,\kappa,t) \gamma^{1 - \kappa}\;.
\end{equation}
\end{lemma}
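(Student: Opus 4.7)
The plan is to decompose the jump of $\Rg^{(j)}(\cdot,x)$ into a deterministic kernel factor and a random spin-increment factor, and bound each one separately.

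First, I would identify the jumps of $\Rg$. A jump occurs at macroscopic time $r$ exactly when some spin at site $y_0\in \Lambda_N$ flips (at microscopic time $\alpha^{-1}r$). Writing
\[
\Rg^{(j)}(x,s) \;=\; \int_{[0,s)}\sum_{z\in\Le}\epsilon^2\,\Pg{t-r}(x-z)\,d\Mg^{(j)}(z,r),
\]
and using the fact that $\mg^{(j)}(\cdot,\alpha^{-1}r)$ has jump $\kg(\cdot-y_0)(\eta-\sigma_{y_0}^{(j)}(\alpha^{-1}r^-))$ at such a time, together with $\Kg(u)=\epsilon^{-2}\kg(\epsilon u)$ and $\Mg=\delta^{-1}\mg$, gives
\[
\Delta_r \Rg^{(j)}(x,r) \;=\; \delta^{-1}\epsilon^{2}\,(\Pg{t-r}\aste \Kg)(x-\bar{y}_0)\,\bigl(\eta-\sigma_{y_0}^{(j)}(\alpha^{-1}r^-)\bigr),
\]
where $\bar{y}_0=\epsilon y_0$ is the macroscopic image of the flipping site.

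Next, I would bound the deterministic kernel factor uniformly in $r,x,y_0$. By the kernel estimate already invoked in the proof of Proposition~\ref{prop:RroughBounds} (Lemma~\ref{lemma:supboundonPK}), one has $|\epsilon^2\Pg{t-r}\aste\Kg(u)|\le C\gamma^2\log(\gamma^{-1})$, so the scaling relation $\delta=\gamma$ yields
\[
|\Delta_r \Rg^{(j)}(x,r)| \;\le\; C\,\gamma\,\log(\gamma^{-1})\,\bigl|\Delta_r \sigma_{y_0}^{(j)}(\alpha^{-1}r)\bigr|.
\]
So it remains to show that for every $q\ge 1$,
\[
\E\!\left[\sup_{0\le r\le t}\sup_{z\in\Le}|\Delta_r\sigma_z(\alpha^{-1}r)|^{q}\right] \;\le\; C(q,t)\,\gamma^{-2}.
\]
To get this, I would use the crude bound $\sup \le \sum$:
\[
\E\!\sup_{r,z}|\Delta_r\sigma_z|^{q} \;\le\; \sum_{z\in\Le}\E\sum_{r\;:\;\text{jump at }z}\bigl(|\sigma_z(\alpha^{-1}r)|^{q}+|\sigma_z(\alpha^{-1}r^-)|^{q}\bigr).
\]
The number of jumps at a fixed site $z$ in macroscopic time $[0,t]$ is Poisson of mean $t/\alpha$, the post-jump spin has uniformly bounded $q$-th moment by \eqref{e:boundonmomentsofnug}, and the pre-jump moments are controlled by \eqref{e:hp2bis} together with the stationary-rate argument of Proposition~\ref{prop:boundonrate}. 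Summing over the $\epsilon^{-2}$ sites then gives the desired $\gamma^{-2}$ factor.

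Finally, to conclude with the correct exponent $\gamma^{1-\kappa}$ rather than $\gamma^{1-2/p}$, I would apply monotonicity of $L^r$ norms. For any $p\ge 1$ and any $Q\ge p$, Jensen/monotonicity gives $(\E\sup|\Delta_r\sigma|^p)^{1/p}\le(\E\sup|\Delta_r\sigma|^Q)^{1/Q}\le C_Q\,\gamma^{-2/Q}$, and choosing $Q$ large enough that $2/Q<\kappa/2$ yields
\[
\E\!\left[\sup_{0\le r\le t}\sup_{x\in\Le}|\Delta_r\Rg^{(j)}(x)|^{p}\right]^{1/p} \;\le\; C\,\gamma\log(\gamma^{-1})\cdot\gamma^{-2/Q} \;\le\; C(p,\kappa,t)\,\gamma^{1-\kappa},
\]
absorbing the logarithm into $\gamma^{-\kappa/2}$. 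The only mildly delicate point is the $\sup\to\sum$ step with its dependence on $p$, which is why the passage through an auxiliary large exponent $Q$ is essential; the rest is bookkeeping of the scaling relations $\epsilon\simeq\gamma^n$, $\alpha=\gamma^{2n-2}$, $\delta=\gamma$.
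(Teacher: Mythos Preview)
Your approach is essentially the same as the paper's: factor the jump as (deterministic kernel) $\times$ (spin increment), bound the kernel uniformly, then control $\sup$ by $\sum$ over all jumps and absorb the resulting large power of $\gamma^{-1}$ by taking the auxiliary exponent large. One small slip: after summing over the $\epsilon^{-2}$ sites and accounting for the Poisson rate $\alpha^{-1}$, the crude count is $\epsilon^{-2}\alpha^{-1}\simeq \gamma^{-(4n-2)}$, not $\gamma^{-2}$; this is harmless, since you only need $Q$ large enough that $(4n-2)/Q<\kappa/2$, but you should state the correct power.
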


\begin{proof}
By monotonicity of $L^p$ norms it is sufficient to prove the bound for high values of $p$. If at microscopic time $r$, a jump happens at macroscopic site $y \in \Le$, the size of the jump of $\Rg(r,x)$
\begin{equation}
|\Delta_r \Rg(r,x)| = \delta^{-1} \epsilon^2  |\Pg{t-r} \Kg(y-x)||\sigma_{r}(\epsilon^{-1} y) - \sigma_{r^-}(\epsilon^{-1} y)|\;.
\end{equation}
From the form of $\nug$ in Subsection~\ref{subsec:stopping time}, 
\[
\E|\sigma_{r}(\epsilon^{-1} y) - \sigma_{r^-}(\epsilon^{-1} y)|^p \leq  C(p)(\E|\sigma_{r}(\epsilon^{-1} y)|^p + \E|\sigma_{r^-}(\epsilon^{-1} y)|^p)< 2 C(p,\nn)
\]
bounded uniformly in the dynamic. Using the fact that, on $\Le$, $\Norm{L^1(\Le)}{\Pg{t}} = 1$ and $\sup_{x \in \Le} \Kg(x) \lesssim \epsilon^{-2}\gamma^2$ one has the following
\begin{gather*}
\E\Big[ \sum_{\substack{(r,x) \in [0,t]\times \Le\\ \text{jump at }(r,x)}}|\Delta_r \Rg(r,x)|^p\Big] \\
\leq \E\Big[ \sum_{i=0}^{N}\E\Big(|\Delta_{r_i} \Rg(r_i,x_i)|^p\Big| \text{jumps at }(r_i,x_i): i = 1\dots N \Big)\Big] \\
\leq C(p,\nn)\delta^{-p}\gamma^{2p} \E \left[ \text{\# of jumps in }[0,t]\right] \leq C(p,\nn,t)\gamma^{p}\epsilon^{-2}\alpha^{-1}\;.
\end{gather*}
And the proof is complete taking $p$ large enough. To go from the first line to the second we used the fact that the rate of the Poisson processes controlling the jumps is a constant, hence it is not dependent from the process.
\end{proof}

\begin{proof}[of Lemma~\ref{lemma:Ubound}] We will apply the Burkholder-Davis-Gundy inequality to $U_{\gamma,t}^{(i,i)}(s,x)$, defined in \eqref{e:cap2quadvardiff}. The bracket process of $s \mapsto \Rg(s,x)$ is given by
\begin{equation}
s \mapsto \sum_{r \leq s} (\Delta_r \Rg(r,x))^2
\end{equation}
and the jumps of $U_{\gamma,t}^{(i,i)}(s,x)$ are given by the jumps of $\Rg(r,x)$. Using Lemma~\ref{lemma:Rjumpsbound} 
\[
\begin{split}
\E\left[ \sup_{0 \leq r \leq t} |\Delta_r U_{\gamma,t}^{(i,i)}(s,x)|^p\right] 	\lesssim \E\left[ \sup_{0 \leq r \leq t} |\Delta_r \Rg(s,x)|^{2p}\right] \leq C(t,\nn,\kappa)\gamma^{2p(1  - \kappa)}\;.
\end{split}
\]
It remains to control the quadratic variation $\ang{U_{\gamma,t}^{(i,i)}(\cdot,x)}_s$. We can write it as
\begin{equation}
\label{e:Uquadvar}
 \sum_{z \in\Le}\int_{r=0}^s \left( \epsilon^2 \delta^{-1	}\Pg{t-r}\Kg(x-z) \right)^4 d \ang{(\sigma_z^{(i)}(\alpha^{-1} r) - \sigma_z^{(i)}(\alpha^{-1} r^-) )^2\mathscr{I}^{r,z} -  \alpha^{-1}\Qm^{i,i}(r^-,z)}_r
\end{equation}
where $\mathscr{I}^{r,z}$ is the Poisson process of rate $\alpha^{-1}$, that is responsible for the jumps.\\
The quantity in the angled brackets in \eqref{e:Uquadvar}, is bounded by 
\[
\alpha^{-1} C \int_S |\eta_z(\alpha^{-1} r) - \sigma_z(\alpha^{-1} r) |^4 \pgm{z,  r^- }(d\eta) \leq C(\nn) \alpha^{-1}\left(1 + |\sigma_z(\alpha^{-1} r^-)|^{4}\right)\;.
\]
Using the general H\"older inequality and Remark \ref{prop:boundonrate}
\[
\E[\prod_{j=1}^{p/2} 1 + |\sigma_{z_j}(\alpha^{-1} r_j)|^4] \leq \prod_{j=1}^{p/2} \left(\E[1 + |\sigma_{z_j}(\alpha^{-1} r_j)|^4]^{p/2}\right)^{2/p} \leq C(\nn,p)
\]
we find
\[
\E\left[\ang{U_{\gamma,t}^{(i,i)}(\cdot,x)}_s^{p/2}\right] \leq C(\nn,p) \left( \alpha^{-1}   \epsilon^8 \delta^{-4} \sum_{z \in\Le}\int_{r=0}^s \left(\Pg{t-r}\Kg(x-z) \right)^4 dr\right)^{p/2}
\]
and the conclusion follows from the fact  $|\Pg{t-s} \Kg(x)| \leq \epsilon^2\gamma^{-2}$ for $x \in \Le$ and Lemma~\ref{lemma:est-cap2kernel3}. By scaling \eqref{e:scaling}, $\alpha \sim \epsilon^2\gamma^{-2}$ and
\[
\E\ang{U_{\gamma,t}^{(i,i)}(\cdot,x)}_s^{p/2} \lesssim \left( \gamma^{	2} \sum_{z \in\Le}  \epsilon^2 \int_{r=0}^s \left(\Pg{t-r}\Kg(x-z) \right)^2 dr \right)^{p/2} \lesssim \gamma^{p(1-\kappa)}
\]
where the constants depends on $\nn,p,\kappa$ and the proof is completed.
\end{proof}

The next lemma correspond to \cite[Lemma~5.2]{MourratWeber}.
\begin{lemma}
\label{lemma:Rjumpsbound2}
For $j = 1,\dots,m$, any $t \geq 0$ and $1 \leq p < \infty$ and $\gamma$ small enough
\begin{equation}
\E \left[ \left|\sup_{x \in \Le }\sum_{r \leq t}  |\Delta_r R_{\gamma,t}^{(j)}(r,x)|^{2} \right|^p \right]^{1/p} \leq C(t,p) \log(\gamma^{-1})\;.
\end{equation}
\end{lemma}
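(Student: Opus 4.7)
The sum $\sum_{r\leq t}|\Delta_r R^{(j)}_{\gamma,t}(r,x)|^{2}$ is exactly the optional quadratic variation $[R^{(j)}_{\gamma,t}(\cdot,x)]_t$ of the martingale $s\mapsto R^{(j)}_{\gamma,t}(\cdot,x)$, since $R^{(j)}_{\gamma,t}(\cdot,x)$ is a pure jump martingale. The plan is to replace the bracket by the predictable quadratic variation, and then by the deterministic renormalization constant $\mathfrak{c}_{\gamma,t}(t)$, which already has the right logarithmic size.

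Using the notation of Lemma~\ref{lemma:Ubound} and Proposition~\ref{prop:boundcrossvariation}, I would write the decomposition
\[
[R^{(j)}_{\gamma,t}(\cdot,x)]_t \;=\; \mathfrak{c}_{\gamma,t}(t)\;+\;U^{(j,j)}_{\gamma,t}(t,x)\;+\;\bigl(\langle R^{(j)}_{\gamma,t}(\cdot,x)\rangle_t - \mathfrak{c}_{\gamma,t}(t)\bigr).
\]
The martingale correction $U^{(j,j)}_{\gamma,t}$ satisfies $\bigl\|\sup_{x\in\Le}|U^{(j,j)}_{\gamma,t}(t,x)|\bigr\|_{L^p}\leq C\gamma^{1-\kappa}$ by Lemma~\ref{lemma:Ubound}, which is negligible compared to $\log(\gamma^{-1})$. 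For the difference $\langle R^{(j)}(x)\rangle_t -\mathfrak{c}_{\gamma,t}(t)$, apply Proposition~\ref{prop:boundcrossvariation} with $i=j$ to get
\[
\Bigl\|\sup_{x\in\Le}\bigl|\langle R^{(j)}_{\gamma,t}(\cdot,x)\rangle_t - \mathfrak{c}_{\gamma,t}(t)\bigr|\Bigr\|_{L^p}\;\leq\; C\gamma^{1-\nu-\kappa}+C(1\wedge t^{-b}\alpha^{b})\gamma^{-\kappa}.
\]
Fixing $t>0$ and exploiting the scaling $\alpha=\gamma^{2n-2}$, choose $b\in(0,1]$ with $b(2n-2)>\kappa$ (possible since $n\geq 1$ and $\kappa$ may be taken arbitrarily small); then $t^{-b}\alpha^{b}\gamma^{-\kappa}=t^{-b}\gamma^{b(2n-2)-\kappa}\to 0$, so this error too is dominated by a constant depending on $t,p$.

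It only remains to note that the deterministic quantity $\mathfrak{c}_{\gamma,t}(t)$, visible from its Fourier representation \eqref{e:cgammatsdef}, enjoys the logarithmic bound $\mathfrak{c}_{\gamma,t}(t)\leq \mathfrak{c}_{\gamma}\leq C\log(\gamma^{-1})$ (this is the standard two-dimensional divergence of the discrete Green function at the origin, on which the renormalisation constant $\mathfrak{c}_{\gamma}$ in \eqref{e:cgammadef} is built). Combining the three contributions yields the desired estimate $\bigl(\E\sup_{x\in\Le}[R^{(j)}_{\gamma,t}(\cdot,x)]_t^{p}\bigr)^{1/p}\leq C(t,p)\log(\gamma^{-1})$ for all $\gamma$ small enough.

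The main obstacle, and the reason for going through Proposition~\ref{prop:boundcrossvariation} rather than proceeding by a direct union bound, is that $|\Le|\sim\gamma^{-2n}$ so any crude union bound combined with a pointwise $L^{p}$ estimate on $\langle R^{(j)}(x)\rangle_t$ would lose an unacceptable factor $\gamma^{-2n/q}$: even taking $q$ very large this leaves a multiplier $\gamma^{-\kappa}$, and $\gamma^{-\kappa}\log(\gamma^{-1})$ is strictly worse than $\log(\gamma^{-1})$ as $\gamma\to 0$. The strength of Proposition~\ref{prop:boundcrossvariation} is precisely that the $\gamma^{-\kappa}$ factor is paired with the small prefactor $\alpha^{b}$ coming from the Poisson-clock rate, which can then be absorbed by choosing $b$ appropriately.
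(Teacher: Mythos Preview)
Your argument is correct, and since the paper does not give its own proof (it only points to \cite[Lemma~5.2]{MourratWeber}), your decomposition
\[
[R^{(j)}_{\gamma,t}(\cdot,x)]_t=\mathfrak{c}_{\gamma,t}(t)+U^{(j,j)}_{\gamma,t}(t,x)+\bigl(\langle R^{(j)}_{\gamma,t}(\cdot,x)\rangle_t-\mathfrak{c}_{\gamma,t}(t)\bigr)
\]
together with Lemma~\ref{lemma:Ubound}, Proposition~\ref{prop:boundcrossvariation} and the deterministic bound $\mathfrak{c}_{\gamma,t}(t)=\CG(t)\lesssim t+\log(\gamma^{-1})$ is a clean way to close the estimate. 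There is no circularity: the proofs of Lemma~\ref{lemma:Ubound} and Proposition~\ref{prop:boundcrossvariation} in Subsection~\ref{subsec-tightnessproof} do not invoke Lemma~\ref{lemma:Rjumpsbound2}.

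It is worth noting that your route differs from the one in \cite{MourratWeber}. There the spins are $\{-1,1\}$-valued, so $\Qm^{j,j}$ is bounded by a constant almost surely; consequently $\langle R^{(j)}_{\gamma,t}(\cdot,x)\rangle_t\leq C\int_0^t\|\Pg{t-r}\Kg\|_{L^2(\Le)}^2\,dr\lesssim\log(\gamma^{-1})$ holds \emph{pathwise and uniformly in $x$}, and one only needs Lemma~\ref{lemma:Ubound} to pass from the bracket to the predictable variation. In the present paper the spins are unbounded and $\Qm^{j,j}$ contains $|\sigma_z|^2$, so that pathwise bound fails; a naive adaptation would produce an extra factor $\sup_{r,z}|\sigma_z(\alpha^{-1}r)|^2$, which by \eqref{e:hpbound} only yields $\gamma^{-\kappa}\log(\gamma^{-1})$. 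Your use of Proposition~\ref{prop:boundcrossvariation} is therefore the right substitute: it already contains the coupling argument that trades the unbounded $\Qm^{j,j}$ for the deterministic $\mathfrak{c}_{\gamma,t}$ uniformly over $\Le$, with an error that vanishes thanks to the $\alpha^b$ prefactor, exactly as you exploit.
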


We have the following proposition, which correspond to \cite[Prop.~5.3]{MourratWeber}.

\begin{proof}[of Proposition~\ref{prop:cap2boundonHermiteerror}]
The proof uses the generalized multidimensional It\^o formula for processes with finite first variation, that can be found in \cite[Chapter~II]{Protter1990}. Let $X_t = (X_{1,t},\dots,X_{n,t})$ a multidimensional process with finite first variation and let $[X_i,X_j]_t$ be its bracket process (find citation). Then
\begin{equation}
\begin{split}
f(X_t) &= f(X_0) +\sum_j \int_0^t \partial_j f(X_{s^-}) dX_{j,s} + \frac{1}{2} \sum_{i,j} \int_0^t \partial_{i}\partial_j f(X_{s^-}) d[X_i,X_j]_s \\
&+ \sum_{s \leq t} \left( \Delta f(X_s) - \sum_j \partial_j f(X_{s^-}) \Delta X_{j,s} - \sum_{i,j}\frac{\partial_j \partial_i f(X_{s^-})}{2} \Delta X_{i,s} \Delta X_{j,s}\right)\;.
\end{split}
\end{equation}
The key step in the proof uses the It\^o formula to prove  \eqref{e:cap2quadvardiffbound}  by induction. Indeed for $\kb= (0,\dots,0)$, the error \eqref{e:cap2Eproc} is zero and \eqref{e:cap2quadvardiffbound} is trivially true. Recall the definitions of the Hermite polynomials $H_{\kb}$ and its derivatives in Section~\ref{subsec:hermite} Using the It\^o formula on $H_{\kb}(\underline{R}_s) = H_{\kb}(R_{\gamma,t}(s,x),[R_{\gamma,t}(\cdot,x)]_s)$ 
\begin{align*}
H_{\kb}(\underline{R}_s) &= \sum_{i,j=1}^m \int_{r = 0}^s \partial_{T_{i,j}}H_{\kb}(\underline{R}_{r^-}) d[ R^{(i)}_{\gamma,t}(\cdot,x), R^{(j)}_{\gamma,t}(\cdot,x)]_r \\
 &+ \sum_{j=1}^m \int_{r = 0}^s \partial_{X_j}H_{\kb}(\underline{R}_{r^-}) d R^{(j)}_{\gamma,t}(r,x) \\
&+ \frac{1}{2}\sum_{i,j = 1}^m  \int_{r = 0}^s \partial_{X_j} \partial_{X_i} H_{\kb}(\underline{R}_{r^-}) d[ R^{(i)}_{\gamma,t}(\cdot,x), R^{(j)}_{\gamma,t}(\cdot,x)]_r + Err_{\kb}(s,x)
\end{align*}
where $Err_{\kb}(s,x)$ contains the jumps.
\begin{multline*}
Err_{\kb}(s,x) = \sum_{r \leq s} \left( \Delta H_{\kb}(\underline{R}_r) - \sum_{j=1}^m \partial_{X_j}H_{\kb}(\underline{R}_{r^-}) \Delta_r R^{(j)}_{\gamma,t}(\cdot,x) \right. \\
-\sum_{i,j=1}^m \partial_{T_{i,j}} H_{\kb}(\underline{R}_{r^-}) \Delta_r [ R^{(i)}_{\gamma,t}(\cdot,x), R^{(j)}_{\gamma,t}(\cdot,x)]_r \\
\left. - \frac{1}{2} \sum_{i,j=1}^m \partial_{X_j} \partial_{X_i} H_{\kb}(\underline{R}_{r^-}) \Delta_r R^{(j)}_{\gamma,t}(\cdot,x) \Delta_r R^{(i)}_{\gamma,t}(\cdot,x) \right)\;.
\end{multline*}
Using the properties of Hermite polynomials $\left(\frac{1}{2}\partial_{X_j} \partial_{X_i}  + \partial_{T_{i,j}}\right) H_{\kb}(\bar{X},\bar{T}) = 0$ the above can be rewritten as
\[
H_{\kb}(\underline{R}_s) = \sum_{j=1}^m k_j \int_{r = 0}^s H_{\kb^{j-}}(\underline{R}_{r^-}) d R^{(j)}_{\gamma,t}(r,x) +  Err_{\kb}(s,x)
\]
which has the same form as \eqref{e:Rpowersdef}. Subtracting the quantity $\Rg^{:\kb:}$ we thus obtain
\begin{equation}
\label{e:cap2equationerroquadvar}
E^{:\kb:}_{\gamma,t}(s,x)  = \sum_{j=1}^m k_j \int_{r = 0}^s E^{:\kb^{j-}:}_{\gamma,t}(r^-,x)  d R^{(j)}_{\gamma,t}(r,x) + Err_{\kb}(s,x)\;.
\end{equation}
We will use the induction over $|\kb|$ to prove \eqref{e:cap2boundonHermiteerror}. Clearly \eqref{e:cap2boundonHermiteerror} holds for every $\kb$ with $|\kb|=1$. Assume that \eqref{e:cap2boundonHermiteerror} holds for every multiindex $\mathbf{0} \leq \mathbf{a} < \kb$. We shall show that the conclusion of the proposition also holds for $\kb$.\\
The first step consists in applying the Burkholder-Davis-Gundy inequality to the integral in \eqref{e:cap2equationerroquadvar}. The quadratic variation is given by
\begin{gather*}
\ang{\int_{r = 0}^{\cdot} E^{:\kb^{j-}:}_{\gamma,t}(r^-,x)  d R^{(j)}_{\gamma,t}(r,x)}_s  \leq C \int_0^s \left| E^{:\kb^{j-}:}_{\gamma,t}(r^-,x) \right|^2 d \ang{R^{(j)}_{\gamma,t}(\cdot,x)}_{r} \\
\leq C  \sup_{0 \leq r \leq t} \left| E^{:\kb^{j-}:}_{\gamma,t}(r^-,x) \right|^2 \ang{R^{(j)}_{\gamma,t}(\cdot,x)}_{s} 
\end{gather*}
and using the Cauchy-Schwarz inequality, the expectation of the $\frac{p}{2}$-th power of the quantity above is bounded by
\begin{gather*}
\E\left[  \sup_{0 \leq r \leq t} \left| E^{:\kb^{j-}:}_{\gamma,t}(r^-,x) \right|^{2p } \right]^{\frac{1}{2}} \E\left[\ang{R^{(j)}_{\gamma,t}(\cdot,x)}_{s}^{p} \right]^{1/2}  \\
\leq C(\kappa,t,p,\nn) \gamma^{p(1-\kappa/2)} \E\left[\ang{R^{(j)}_{\gamma,t}(\cdot,x)}_{s}^{p} \right]^{1/2}  \leq C(\kappa,t,p,\nn) \gamma^{p(1-\kappa/2)} \gamma^{-p\kappa/2}\;,
\end{gather*}
where we used induction and \eqref{e:Rquadexpbound}. We bound the jump term in a similar way, using Lemma~\ref{lemma:Rjumpsbound}
\begin{gather*}
\E \left[ \sup_{0 \leq s \leq t} |\Delta_{s}E^{:\kb:}_{\gamma,t}(s,x) |^p\right] \leq \E \left[ \sup_{0 \leq r \leq t} |E^{:\kb:^{j-}}_{\gamma,t}(r,x) |^p |\Delta_r R^{(j)}_{\gamma,t}(\cdot,x)|^{p}\right] \\
\leq \E \left[ \sup_{0 \leq r \leq t} |E^{:\kb:^{j-}}_{\gamma,t}(r,x) |^{2p}\right]^{\frac{1}{2}}\E \left[ \sup_{0 \leq r \leq t} |\Delta_r R^{(j)}_{\gamma,t}(\cdot,x)|^{2p}\right]^{\frac{1}{2}} \leq  C(\kappa,t,p,\nn) \gamma^{p(1-\kappa)}\;.
\end{gather*}
It remains to bound the error $Err_{\kb}(s,x)$, that contains the errors from the application of the It\^o formula for processes with jumps, Taylor expanding up to second order the Hermite polynomials. For $\bar{x}=(x_1,\dots,x_m), t = (t_{i,j})_{i,j=1}^m$
\begin{gather*}
\Big| H_{\kb}(\bar{x} + \bar{y},\bar{t} + \bar{r}) - H_{\kb}(\bar{x},\bar{t}) \\
- \sum_{j=1}^n \partial_{X_j} H_{\kb}(\bar{x},\bar{t}) y_j - \frac{1}{2} \sum_{i,j=1}^n \partial_{X_j} \partial_{X_i} H_{\kb}(\bar{x},\bar{t}) y_j y_i  - \sum_{i,j=1}^n \partial_{T_{i,j}} H_{\kb}(\bar{x},\bar{t}) r_{i,j} \Big| \\
\leq C \left( \sum_{\mathbf{a}: |\mathbf{a}| = |\mathbf{k}| - 2}|\bar{x}|^{\mathbf{a}} + |\bar{t}|^{\mathbf{a}} + 1\right) (\sum_{\mathbf{b}:|\mathbf{b}|=3} |\bar{y}|^{\mathbf{b}} + \sum_{\mathbf{b}:|\mathbf{b}|=2} |\bar{r}|^{\mathbf{b}})
\end{gather*}
hence
\begin{gather*}
|Err_{\kb}(s,x)| \leq C \sum_{j=1}^m\left( \sup_{r \leq s} |R_{\gamma,t}^{(j)}(r,x)|^{|\kb|-2} + \sup_{r \leq s} [R^{(j)}_{\gamma,t}(\cdot,x)]_r^{(|\kb|-2)/2}  + 1 \right) \times\\
\times\sum_{r \leq s} \left( |\Delta_r R_{\gamma,t}^{(j)}(r,x)|^{3} + |\Delta_r [R_{\gamma,t}^{(j)}(\cdot,x)]_r|^{2}\right)
\end{gather*}
and using Lemma~\ref{lemma:Rjumpsbound} and Lemma~\ref{lemma:Rjumpsbound2}, and H\"older inequality for $q_1^{-1} + q_2^{-1} + q_3^{-1}= p^{-1}$
\begin{multline*}
\E \left[ \sup_{x \in \Le, s \in [0,t]}|Err_{\kb}(s,x)|^p \right]^{1/p} \leq  \E \left[ \left(\sup_{x \in \Le, r \leq s} |R_{\gamma,t}(r,x)|_{\R^n}^{|\kb| - 2} + 1 \right)^{q_1} \right]^{1/q_1} \times \\
\times\E\left[|\Delta_r R_{\gamma,t}(r,x)|^{q_2} \right]^{1/q_2}\E \left[ \sup_{x \in \Le }\left(\sum_{r \leq s} \norm{\R}{\Delta_r R_{\gamma,t}(r,x)}^{2} \right)^{q_3} \right]^{1/q_3}  \\
\leq C(\kappa,\nn,t,p) \gamma^{	1-\kappa} E \left[ \sup_{x \in \Le, r \leq s} |R_{\gamma,t}(r,x)|_{\R^n}^{q_1(|\kb| - 2)} + 1  \right]^{1/q_1} \\ \leq C(\kappa,\nn,t,p) \gamma^{1-\kappa}\gamma^{-\kappa}
\end{multline*}
where in the last line we used Remark~\ref{remark:puntualbound} and the induction is proven.
\end{proof}


\begin{proof}[of Proposition~\ref{prop:boundcrossvariation}]

We will prove the above theorem for $p$ large, the theorem for all $p > 1$ will follows from the monotonicity of $L^p$ norms. We start computing
\begin{multline}
\ang{R^{(i)}_{\gamma,t}(\cdot,x), R^{(j)}_{\gamma,t}(\cdot,x)}_s - \mathfrak{c}_{\gamma,t}(s)\delta_{i,j} \\
= \int_0^s \sum_{z_1,z_2 \in \Le} \epsilon^{4}\Pg{t-r}(x-z_1) \Pg{t-r}(x-z_2) \left( d \ang{\Mg^{(i)}(\cdot,z_1),\Mg^{(j)}(\cdot,z_2)}_r -2\delta_{i,j} dr\right)\\
= \int_0^s \sum_{z \in \Le} \epsilon^2 |\Pg{t-r}\Kg(x-z)|^2 \left( \Qm^{i,j}(r^-,z) - 2\delta_{i,j}\right) dr\;.
\label{e:covarfinalbounds1}
\end{multline}
The proof consists in evaluating the difference between $\Qm^{i,j}(r^-,z)$ and $2\delta_{i,j}$. In order to prove that the average is negligible in the limit we will need to exploit the time integral. From Proposition~\ref{prop:boundonrate}, and the form of the stopping time $\taun$ follows that we can prove the proposition for
\begin{equation}
\label{e:quadvarmarting}
\int_0^s \sum_{z \in \Le} \epsilon^2 |\Pg{t-r}\Kg(x-z)|^2 \left( \sigma_{\epsilon^{-1} z}^{(i)}(\alpha^{-1} s)\sigma_{\epsilon^{-1} z}^{(j)}(\alpha^{-1} s) - \delta_{i,j}\right) dr\,.
\end{equation}
Following \cite{ShenWeber}, we produce a coupling with the dynamic at infinite temperature $\beta = 0$.
Let
\[
Z_h = \int_S e^{\beta\ang{h,\eta}} \nug(d\eta) \qquad P^{h} = \int_S Z_h^{-1}e^{\beta \ang{h,\eta}} \wedge 1\ \nug(d\eta) 
\]
In particular 
\[
0 \leq 1 - P^{h} \leq 2\beta |h|\int_S |\eta| e^{\beta|h||\eta|} \nug(d\eta)\;.
\]
Let $ \tilde{\sigma}_{x}(t)$ be a process on $S^{\Lambda_N}\times \R^+$, starting from the configuration with all spins equal to $0$ following the Glauber dynamic with parameter $\beta = 0$. Recall the construction in Section~\ref{sec:model-theorems} together with stopping time in Subsection~\ref{subsec:stopping time}. We will now define the coupling between $\tilde{\sigma}_{x}(t)$ and $\sigma_{x}(t)$ as follows: since the Poisson times between each jumps have been chosen to be independent of the configuration and with constant mean, we can construct $\tilde{\sigma}_{x}(t)$ in such a way that it has jumps at the same time and at the same place as the original process. Assume a jump happens at $(x,t)$. If $t > \taun$, we chose $\sigma_x(t)=\tilde{\sigma}_x(t)$ since both are chosen according to $\nug$. If $t \leq \taun$ with probability $P^{\hg(x,\sigma_{t^-})}$ we choose $\sigma_x(t) = \tilde{\sigma}_x(t)$ distributed according to the density (here $\hg(x,t) = \hg(x,\sigma_{t^-})$)
\[
(P^{\hg(x,t)})^{-1} \left[ Z_{\hg(x,t)}^{-1}e^{\beta \ang{\hg(x,t),\eta}} \wedge 1 \right]\nug(d\eta) 
\]
and with probability $1-P^{\hg(x,t)}$ we will draw $\tilde{\sigma}_x(t)$ and $\sigma_x(t)$ independently with density, respectively proportional to
\[
 \left[ 1 - Z_{\hg(x,t)}^{-1}e^{\beta \ang{\hg(x,t),\eta}} \right]^+\nug(d\eta)\qquad\text{ and }\qquad  \left[ Z_{\hg(x,t)}^{-1}e^{\beta \ang{\hg(x,t),\eta}} - 1\right]^+\nug(d\eta)
\]
Thus for any function $f:S \to \R$, for $x \in \Lambda_N, t \in \R^+$ and $p \geq 1$
\[
 \left| f(\sigma_x(t)) - f(\tilde{\sigma}_x(t)) \right| \leq   \left| f(\sigma_x(t)) - f(\tilde{\sigma}_x(t)) \right|  +    1_{\{t \leq T_0\}}\left| f(\sigma_x(0))\right|
\]
where $T_0$ denotes the time of the first jump. For the inequality we used the fact that $\Norm{L^{\infty}}{\hg(\cdot,t)}\leq \gamma^{1-\nu} \nn$ for $t \leq \taun$ and the fact that $\nug$ has exponential moments.
\begin{multline*}
\sum_{x \in\Le} \E\left(\int_0^s \sum_{z \in \Le} \epsilon^2 |\Pg{t-r}\Kg(x-z)|^2 \left| f(\sigma_{\epsilon^{-1}z}(\alpha^{-1} r)) - f(\tilde{\sigma}_{\epsilon^{-1}z}(\alpha^{-1} r)) \right| dr\right)^p \\
\lesssim \epsilon^{-2}\left(\gamma^{ 1-\nu } + \E\left[ \alpha T_0 t^{-1} \wedge 1 \right]\right)^p \log^p(\gamma^{-1})\;.
\end{multline*}
The last expectation is estimated with $x\wedge 1 \leq x^{b}$ for any $b \in [0,1]$. This implies that it is sufficient to prove the proposition in the infinite temperature case, starting from the zero initial condition.
Let ${\tau_l(x)}_{x \in \Lambda_N, l \in \N}$ denote the collection of random times where $\tau_l(z)$ is the time at which the spin at site $x$ jumps for the $l$-th time, in macroscopic coordinate. When a jump occurs, the distribution of the new spin is drawn independently from the other, according to $\nug$. Let $M_s$ the quantity in \eqref{e:quadvarmarting}, calculated with the auxiliary process $\tilde{\sigma}$. We bound the supremum of \eqref{e:boundcrossvariation} in time with the supremum over a discretization of $[0,T]$ of mesh size $\gamma^R$ where $R$ is chosen later. The difference $|M_s - M_{\gamma^R\lfloor \gamma^{-R}s\rfloor} |$ is bounded by 
\[
2 \int_{\gamma^R\lfloor \gamma^{-R}s\rfloor}^{s} \Norm{L^2(\Lambda_N)}{\Pg{t-r}\Kg}^2 dr  \Norm{L^{\infty}(\Lambda_N \times [0,T])}{\tilde{\sigma}}^2 \leq \gamma^R \epsilon^{-2}\gamma^2	\Norm{L^{\infty}(\Lambda_N \times [0,T])}{\tilde{\sigma}}^2\;.
\]
Using
\[
\E\Big[\sup_{z \in \Lambda_N ;\ s \in [0,T])}{|\tilde{\sigma}_{z}(\alpha^{-1}s)|}^{2p} \Big] \lesssim \E\left[\text{\# of jumps in [0,T]}\right] = \epsilon^{-2} \alpha^{-1} T
\]
we deduce that $R$ has to satisfy $\gamma^{R}\epsilon^{-2}\gamma^2 \lesssim \gamma$. Bounding $\E[\sup_{s \in \gamma^R\Z \cap [0,t]} |M_s|^p]$ with $\E[\sum_{s \in \gamma^R\Z \cap [0,t]} |M_s|^p]$ it remains to estimate $\E[|M_s|^p]$. Let us expand the product
\[
\sum_{z_1,\dots,z_p \in \Lambda_N}\epsilon^{2p}\sum_{l_1,\dots,l_p \geq 1} \E\prod_{v=1}^p\left( \int_{\tau_{l_v}(z_v) \wedge s}^{\tau_{l_v+1}(z_v) \wedge s} |\Pg{t-r}\Kg(x-z)|^2 [\tilde{\sigma}^{(i)}\tilde{\sigma}^{(j)}_{z_v}(\alpha^{-1}\tau_{l_v})-\delta_{i,j}]dr\right)
\]
and notice that for different $z_v$ or $l_v$, the quantity inside the integrals are independent and with mean zero. We can thus perform the summation indexed over the possible partition of $\{1,\dots,p\}$ that don't contain singletons. Let $p$ be even and denote with $\mathcal{P}^*$ the set of such partitions, let $(q_1,\dots,q_m)$ be the sizes of the sets of a given partition $Q \in \mathcal{P}^*$ with $q_1+\dots + q_m = p$
\begin{multline*}
\epsilon^{2(p - m )}\sum_{z_1,\dots,z_{m} \in \Lambda_N}\epsilon^{2m}\sum_{l_1,\dots,l_{m} \geq 1} \prod_{v=1}^{m} \E\left[\left( \int_{\tau_{l_v}(z_v) \wedge s}^{\tau_{l_v+1}(z_v) \wedge s} |\Pg{t-r}\Kg(x-z)|^2 dr\right)^{ q_v}\right]\\
\leq\epsilon^{2(p - m )}\left(\int_0^s \Norm{L^2(\Le)}{\Pg{t-r}\Kg}^2 dr \right)^m \Norm{L^{\infty}(\Le)}{\Pg{t-r}\Kg}^{2(p-m)} \E\Big[\sup_{z, l}|\tau_l(z) -\tau_{l+1}(z) |^{p-m}\Big]\\
\lesssim \epsilon^{2(p-m)} \log^m(\gamma^{-1}) (\epsilon^{-2}\gamma^2)^{2(p-m)} (\epsilon^{-2}\alpha^{-1}T)\alpha^{p-m} \\
\lesssim \log^{m}(\gamma^{-1}) \gamma^{2(p-m)} \epsilon^{-2}\alpha^{-1}
\end{multline*}
where the supremum runs over $z \in \Lambda_N$ and $l \geq 1$ such that $\tau_l(z)\leq s$. Here in the second inequality we used lemma \ref{lemma:est-cap2kernel3}. The fact that $m \leq p/2$ and $p$ is large proves the proposition.
\end{proof}
\begin{proof}[of Proposition~\ref{prop:hermiteRbound}]
In virtue of Lemma~\ref{lemma:Ubound} and bound \eqref{e:cap2boundonHermiteerror}, it is sufficient to show the above inequality for the difference
\begin{align*}
&H_{\kb}(\Rg(s,z),\mathfrak{c}_{\gamma,t}(s)I_m) -H_{\kb}(\Rg(s,z),[\Rg(\cdot,z),\Rg(\cdot,z)]_s) \;.
\end{align*}
It is easy to see that the above difference can be written as a polynomial in the entries of the matrix $\mathfrak{c}_{\gamma,t}(s)I_m - [\Rg(\cdot,z),\Rg(\cdot,z)]_s$. The coefficient of the polynomial are of the form $\partial_{T_{i_1,i_2}}\partial_{T_{i_{m-1},i_m}} H_{\kb}(\Rg(s,z),[\Rg(\cdot,z),\Rg(\cdot,z)]_s)$.\\
Using the recursion formula for the Hermite polynomials in Subsection~\ref{subsec:hermite}, we can bound
the left-hand-side of \eqref{e:Hermitepolywithc} with
\begin{gather*}
\leq C(\kb,p) \sup_{\substack{\mathbf{0} \leq \mathbf{a}\leq \kb \\ 1 \leq i,j \leq m}} \E \left[ \sup_{x \in \Le}\sup_{0 \leq s \leq t} \left| [ R^{(i)}_{\gamma,t}(\cdot,x), R^{(j)}_{\gamma,t}(\cdot,x)]_s - \mathfrak{c}_{\gamma,t}(s)\delta_{i,j}\right|^{2p\left\lfloor\frac{|\kb - \mathbf{a}|}{2}\right\rfloor} \right]^{1/2p}\times\\
\times\E\left[\sup_{x \in \Le}\sup_{0 \leq s \leq t} \left| H_{\mathbf{a}}(\Rg(s,z),[\Rg(\cdot,z),\Rg(\cdot,z)]_s)  \right|^{2p} \right]^{1/2p} \\
\leq C(\kappa,T,p,\nn,\nu,\kb,b)  \gamma^{-\kappa }\left(\gamma^{(1- \nu - \kappa)} +  (1 \wedge t^{-b } \alpha^{b })  \left(\E\sup_x|\sigma_{\epsilon^{-1}x}(0)|^{2p |\kb|}\right)^{1/2p}\right)
\end{gather*}
where in the last line we used Propositions~\ref{prop:boundcrossvariation},~\ref{prop:cap2boundonHermiteerror}~and~\ref{prop:cap2boundsonR}  with the observation \ref{lemma:infinitybesovbound}.\\
The proof then follows from assumption \eqref{e:hp2bis} and \eqref{e:hpbound} for a suitably large power.

\end{proof}

\section{The nonlinear process}
\label{sec:nonlinear}
In this section we prove Theorem~\ref{thm:maintheorem} controlling the nonlinear dynamic. In Section~\ref{sec:tightness-linear}, we showed that the process $\Zg$, obtained from the dynamic stopped at random time $\taun$, as described in Subsection~\ref{subsec:beta-nug}, is convergent in law to the vector-valued stochastic heat equation. The random time guarantees a control over the $\Cc^{-\nu}$ norm of the process for a given $\nu > 0$, that we will assume to be fixed for this section. Following the strategy outlined in Subsection~\ref{subsec:limitingspde}, we use the linear dynamic to control the nonlinear one.\\
Recall from \eqref{e:Xequation1} in Section~\ref{sec:model-theorems} that the nonlinear process $\Xg$, started from $\Xng$ satisfies
\begin{equation}
\label{e:cap6equation}
 \Xg(z,t) = \Pg{t} \Xng(z) + \int_0^t \Pg{t-s} \Kg \aste \left( \pptg(\Xg(z,s)) + \Eg(z,s)\right) ds + \Zg(z,t)
\end{equation}
for $z \in \Le, t \in [0,T]$, and where the polynomial $\pptg$ is defined in \eqref{e:polydef}.\\

\subsection{Renormalization of the polynomial}

At some point it will be more convenient to renormalize the power of $\Xg$ with the time dependent $\CG(s)$ approximation of $\CG$ defined in \eqref{e:cgammatdef}. Consequently we will define
\begin{equation}
\label{e:coeffdefwiths}
\aag{2k+1}(s) \eqdef \left(e^{\frac{\CG(s)}{2}\Delta_X^*} \aatg{}\right)_{2k+1}
\end{equation}

and the corresponding decomposition
\begin{multline}
\label{e:polyrenormwiths}
\ppg^{(j)}(\Xg(z,s),s)  \eqdef \aag{1}(s)\Xg^{(j)} (z,s) + \aag{3}(s) \HXXgj{2}{\CG(s)} (z,s) + \dots \\
\dots + \aag{2n-1}(s) \HXXgj{2n-2}{\CG(s)} (z,s)
\end{multline}

The two similar decompositions \eqref{e:polyrenorm1} and \eqref{e:polyrenormwiths} will be useful for different purposes, in particular \eqref{e:polyrenormwiths} will be used when we will separate the linear part of the dynamic from the nonlinear one.\\
We now provide a description for the aforementioned polynomials as $\gamma$ goes to zero.\\
Assumption \eqref{e:hpmeasure} guarantees that the limit of $\aag{2k+1}$ is well defined. Moreover, from \eqref{e:polyrenormwiths} and \eqref{e:hpmeasure} we have that the following limit exists for every $s > 0$
\[
\aa{2k+1}(s) \eqdef \lim_{\gamma \to 0} \aag{2k+1}(s) = \lim_{\gamma \to 0} \left(e^{\frac{\CG(s)-\CG}{2}\Delta_X^*} \aag{}\right)_{2k+1} = \left(e^{\frac{A(s)}{2}\Delta_X^*} \aa{}\right)_{2k+1}
\]
and
\begin{multline*}
|\aa{2k+1}(s) - \aag{2k+1}(s)| \leq \left|\left(\Big( e^{\frac{A(s)}{2}\Delta_X^*}- e^{\frac{\CG(s)-\CG}{2}\Delta_X^*} \Big)\aag{}\right)_{2k+1}  \right| + \left|\left(e^{\frac{A(s)}{2}\Delta_X^*}  \left(\aa{} - \aag{} \right)\right)_{2k+1}  \right| \\
\lesssim s^{-\lambda} \alpha^{\lambda} |A(s)|^{|\kb|-1}+ |A(s)|^{|\kb|} c_0 \gamma^{\lambda_0}
\end{multline*}
where $A(s) \eqdef \lim_{\gamma \to 0} \CG(s)-\CG = \frac{s}{2} - \sum_{0 < |\omega|} \frac{e^{-2 \pi^2 s|\omega|^2}}{4\pi^2|\omega|^2}$ is a continuous function in $s$ on $(0,T]$ that diverges logarithmically as $s \to 0$. Here we used the bounds in \cite[Lemma~7.1]{MourratWeber} 
\[
 \left| A(s) -  \CG(s)+\CG \right| \lesssim  s^{-\lambda} \alpha^{\lambda}
\]
for $\lambda \in (0,1/2)$.
\subsection{Approximation and convergence of the nonlinear dynamic}

We will now introduce some approximations of the nonlinear part of the process.\\
We will first extend to the whole torus the relation \eqref{e:cap6equation}, in the same way as in \cite{MourratWeber}. This is not automatic since taking the power of the field do not commute with the trigonometric polynomial extension. In doing so recall the extension operator defined in \eqref{e:extentionoperator} and the definitions \eqref{e:highlowestensiondef}. Consider moreover the convolution
\[
F \star G (z) = \int_{[-1,1]^2} F(x-y)G(y) dy  
\]
for $x \in [-1,1]^2$.
\begin{proposition}
\label{prop:nonlineardynequation}
The multidimensional process $\Xg$, extended over the torus as in \eqref{e:extentionoperator}, started from $\Xng$ satisfies
\begin{equation}
\label{e:nonlineardynequation}
 \Xg(z,t) = \Pg{t} \Xng(z) + \int_0^t \Pg{t-s} \Kg \star \left( \pptg(\Xg(\cdot,s)) + \Err(\cdot,s)\right)(z) ds + \Zg(z,t)
\end{equation}
for $z \in \T^2, t \in [0,T]$, where the polynomial $\pptg$ is defined in \eqref{e:polydef}.\\
Moreover the error term satisfies
\begin{multline*}
\Norm{L^{\infty}(\T^2)}{\Err(\cdot,s)} \leq  C(T,\nu,\kappa)\left(1 + \Norm{\Cc^{-\nu}}{\Xg(\cdot,s)}\right)^{2n-2} \times\\
\times\left(\gamma^{-\kappa} \epsilon^{-2(n-1)\nu} \Norm{L^{\infty}(\T^2)}{\XgH(\cdot,s)}   + \gamma^{2} \epsilon^{-(2n+1)\nu} \Norm{\Cc^{-\nu}}{\Xg(\cdot,s)}^{3} \right)
\end{multline*}
\end{proposition}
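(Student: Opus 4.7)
The strategy is to lift the lattice identity \eqref{e:cap6equation} to the whole torus $\T^2$ via the extension operator $\Ex$ of \eqref{e:extentionoperator} and to identify the error introduced by this extension. The key Fourier observation is that for any $F:\Le\to\R$ one has $\Ex(\Kg\aste F)=\Kg\star\Ex(F)$ on $\T^2$, since both sides correspond to multiplication by $\hat\Kg$ in Fourier and yield a trigonometric polynomial supported in $\{-N,\dots,N\}^2$ (here $\Kg$ on the right is the trigonometric extension of $\Kg|_{\Le}$). Applying $\Ex$ term by term to \eqref{e:cap6equation} and invoking this identity thus gives the claimed equation provided one defines
\[
\Err(z,s) \eqdef \Ex(\Eg|_{\Le})(z,s) + \bigl(\Ex(\pptg(\Xg)|_{\Le}) - \pptg(\Xg)\bigr)(z,s),
\]
where on the right $\Xg$ denotes the trigonometric extension of the lattice process and $\Eg$ is the higher-order remainder controlled by \eqref{e:errorterm}. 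The first summand absorbs the tail of the Taylor expansion of $\Phi$; the second is an aliasing error caused by the fact that the pointwise polynomial $\pptg$ does not commute with trigonometric interpolation.

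For the aliasing term, decompose $\Xg=\XgL+\XgH$ according to \eqref{e:highlowestensiondef}. The threshold $\tfrac{3}{8(2n-1)}\epsilon^{-1}$ is chosen precisely so that $\pptg(\XgL)$ is a trigonometric polynomial of degree at most $(2n-1)\cdot\tfrac{3}{8(2n-1)}\epsilon^{-1}=\tfrac{3}{8}\epsilon^{-1}<N$. Its values on $\Le$ therefore determine it uniquely, giving $\Ex(\pptg(\XgL)|_{\Le})=\pptg(\XgL)$ and hence
\[
\Ex(\pptg(\Xg)|_{\Le})-\pptg(\Xg) = \Ex\bigl((\pptg(\Xg)-\pptg(\XgL))|_{\Le}\bigr) - \bigl(\pptg(\Xg)-\pptg(\XgL)\bigr).
\]
Expanding $\pptg(\Xg)-\pptg(\XgL)$ by multilinearity produces a finite sum of monomials of total degree at most $2n-1$ in the components of $\XgL$ and $\XgH$, each of which contains at least one factor of some $\XgH^{(i)}$.

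To bound these contributions in $L^\infty(\T^2)$, combine the Bernstein inequality $\|F\|_{L^\infty}\lesssim\epsilon^{-\nu}\|F\|_{\Cc^{-\nu}}$ for trigonometric polynomials of degree $\sim\epsilon^{-1}$ with the coefficient bound $|\aatg{2k-1}|=|\bigl(e^{-\frac{\CG}{2}\Delta_X^*}\aag{}\bigr)_{2k-1}|\lesssim \log^{n-1}(\gamma^{-1})\leq C\gamma^{-\kappa}$, obtained from the logarithmic divergence of $\CG$ and assumption \eqref{e:hpmeasure}. Each monomial is then dominated by $\|\XgH\|_{L^\infty}\cdot(\epsilon^{-\nu}\|\Xg\|_{\Cc^{-\nu}})^{2n-2}$, and $\Ex$ itself incurs only a logarithmic Lebesgue-constant blow-up in $L^\infty$, which is again absorbed by the $\gamma^{-\kappa}$ factor. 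This delivers the first term of the stated bound.

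For the Taylor remainder, \eqref{e:errorterm} gives $|\Eg(x,s)|\leq C\gamma^2|\Xg(x,s)|^{2n+1}\int_S e^{\gamma\beta|\Xg(x,s)||\eta|}\nug(d\eta)$. Under the stopping time $\taun$ and Bernstein, $|\Xg(x,s)|\lesssim\epsilon^{-\nu}\nn$; since $\epsilon\sim\gamma^n$ and $\nu$ is small, the exponent $\gamma\beta|\Xg||\eta|$ tends to zero with $\gamma$, so the integral is uniformly bounded thanks to \eqref{e:hpexponentialmoment}. Consequently $\|\Eg(\cdot,s)\|_{L^\infty(\Le)}\lesssim\gamma^2\epsilon^{-(2n+1)\nu}\|\Xg\|_{\Cc^{-\nu}}^{2n+1}$, and writing $\|\Xg\|^{2n+1}\leq\|\Xg\|^{3}(1+\|\Xg\|)^{2n-2}$ yields the second term of the bound. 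The principal obstacle is the careful Fourier-analytic bookkeeping behind the identity $\Ex(\pptg(\XgL)|_{\Le})=\pptg(\XgL)$ and the verification that every $L^\infty$ penalty incurred by restriction and trigonometric interpolation is at most a logarithmic factor in $\gamma^{-1}$, absorbed everywhere into $\gamma^{-\kappa}$.
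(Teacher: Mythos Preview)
Your proposal is correct and follows essentially the same route as the paper, which simply cites \cite[Lemma~7.1]{MourratWeber}: lift \eqref{e:cap6equation} to $\T^2$ via $\Ex$, use the commutation $\Ex(\Kg\aste F)=\Kg\star\Ex(F)$, split $\Err$ into the aliasing part $\Ex(\pptg(\Xg))-\pptg(\Xg)$ handled through the high/low decomposition \eqref{e:highlowestensiondef}, and the Taylor remainder $\Eg$ bounded via \eqref{e:errorterm} and Lemma~\ref{lemma:infinitybesovbound}. Your bookkeeping of the logarithmic factors in the coefficients $\aatg{2k-1}$ and in the Lebesgue constant of $\Ex$, both absorbed into $\gamma^{-\kappa}$, is the correct refinement needed here relative to the degree-$3$ case.
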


\begin{proof}
The proof is the same as \cite[Lemma~7.1]{MourratWeber}, we only recall the bound on the error.  

From \eqref{e:errorterm} and Lemma~\ref{lemma:infinitybesovbound}, for $x \in \Le$
\begin{gather*}
|\Eg(x,s)| \leq C(\nn,\nu)\gamma^{2} \epsilon^{-(2n+1)\nu} \Norm{\Cc^{-\nu}}{\Xg(\cdot,s)}^{2n+1} 
\end{gather*}
and we can extend the previous inequality to $x \in \T^2$ at the expenses of an arbitrary small negative power of $\epsilon$.
\end{proof}

\begin{corollary}
\label{cor:nonlineardynequation}
Let $c_0 > 0$ and $\lambda_0 > 0$ as in  \eqref{e:hpmeasure}. Then the process $\Xg$ satisfies \eqref{e:nonlineardynequation} with the limiting polynomial $\pp$ (whose coefficient are independent of $\gamma$) defined in \eqref{e:polylimitdef} and the error term satisfying
\begin{multline*}
\Norm{L^{\infty}(\T^2)}{\Err(\cdot,s)} \leq  C(T,\nu,\kappa,\nn)\left(1 + \Norm{\Cc^{-\nu}}{\Xg(\cdot,s)}\right)^{2n-2} \\
\times\Big(\gamma^{-\kappa} \epsilon^{-2(n-1)\nu} \Norm{L^{\infty}(\T^2)}{\XgH(\cdot,s)} + \dots  \\
 \dots +  \gamma^{2} \epsilon^{-(2n+1)\nu} \Norm{\Cc^{-\nu}}{\Xg(\cdot,s)}^{3} + c_0 \gamma^{\lambda_0 - \kappa}\Norm{\Cc^{-\nu}}{\Xg(\cdot,s)}\Big)
\end{multline*}
\end{corollary}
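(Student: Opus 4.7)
\noindent\textbf{Proof proposal for Corollary~\ref{cor:nonlineardynequation}.}
The plan is to start from Proposition~\ref{prop:nonlineardynequation}, which already yields \eqref{e:nonlineardynequation} with the non-renormalized polynomial $\pptg$, and to replace $\pptg$ by $\pp$ at the cost of an additional (small) contribution to $\Err$. The only new ingredient, compared to Proposition~\ref{prop:nonlineardynequation}, is the quantitative closeness of the coefficients $\aag{2k+1}$ to their limits $\aa{2k+1}$ provided by assumption \eqref{e:hpmeasure}, together with a bound on Hermite polynomials of $\Xg$ with the (logarithmically diverging) renormalization constant $\CG$.

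First I would use the identity \eqref{e:polyrenorm1}--\eqref{e:polyrenorm} to rewrite, for every fixed $s \in [0,T]$ and every component $j$,
\[
\pptg^{(j)}(\Xg(\cdot,s)) = \sum_{k=0}^{n-1} \aag{2k+1}\, H\bigl(\Xg^{(j)}|\Xg|^{2k},\CG\bigr)(\cdot,s),
\]
so that setting
\[
\pp^{(j)}(\Xg(\cdot,s)) := \sum_{k=0}^{n-1} \aa{2k+1}\, H\bigl(\Xg^{(j)}|\Xg|^{2k},\CG\bigr)(\cdot,s)
\]
(i.e.\ interpreting the Wick powers $:\Xg^{(j)}|\Xg|^{2k}:$ at level $\gamma$ as the multivariate Hermite polynomials with covariance $\CG I_m$) gives
\[
\pptg^{(j)}(\Xg(\cdot,s)) - \pp^{(j)}(\Xg(\cdot,s)) = \sum_{k=0}^{n-1}\bigl(\aag{2k+1}-\aa{2k+1}\bigr)\, H\bigl(\Xg^{(j)}|\Xg|^{2k},\CG\bigr)(\cdot,s).
\]
By assumption \eqref{e:hpmeasure}, each coefficient difference is controlled by $c_0\gamma^{\lambda_0}$.

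Next I would bound the Hermite polynomials in $L^{\infty}(\T^2)$. From Remark~\ref{rmk:Hermitepoly}, $H(\Xg^{(j)}|\Xg|^{2k},\CG)$ is a linear combination (with coefficients of size at most $\CG^{k-\ell}$) of monomials of the form $\Xg^{(j)}|\Xg|^{2\ell}$, $0\le \ell\le k$. Using Lemma~\ref{lemma:infinitybesovbound} (i.e. $\Norm{L^\infty(\T^2)}{\Xg(\cdot,s)}\lesssim \epsilon^{-\nu}\Norm{\Cc^{-\nu}}{\Xg(\cdot,s)}$) together with the logarithmic divergence of $\CG$, one obtains, for any $\kappa>0$,
\[
\Norm{L^\infty(\T^2)}{H(\Xg^{(j)}|\Xg|^{2k},\CG)(\cdot,s)} \lesssim \gamma^{-\kappa}\,\Norm{\Cc^{-\nu}}{\Xg(\cdot,s)}\,\bigl(1+\Norm{\Cc^{-\nu}}{\Xg(\cdot,s)}\bigr)^{2n-2},
\]
where the $\epsilon^{-\nu}$ factors and the logarithmic factor from $\CG$ are absorbed into $\gamma^{-\kappa}$ since both are slower than any negative power of $\gamma$.

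Combining the two estimates yields
\[
\Norm{L^\infty(\T^2)}{\bigl(\pptg-\pp\bigr)(\Xg(\cdot,s))} \lesssim c_0\,\gamma^{\lambda_0-\kappa}\,\Norm{\Cc^{-\nu}}{\Xg(\cdot,s)}\,\bigl(1+\Norm{\Cc^{-\nu}}{\Xg(\cdot,s)}\bigr)^{2n-2}.
\]
Writing \eqref{e:nonlineardynequation} with $\pp$ instead of $\pptg$ then produces an additional contribution to $\Err(\cdot,s)$ precisely of this size, which is the extra term appearing in the statement. The remaining pieces of the error are inherited verbatim from Proposition~\ref{prop:nonlineardynequation}.

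There is no real obstacle here: the work is a bookkeeping of constants, with the only mildly delicate point being to ensure that the logarithmic divergences coming both from $\CG$ (through the Hermite expansion) and from the discrete-to-continuous Besov embedding $L^\infty\hookleftarrow\Cc^{-\nu}$ are all absorbed into a single $\gamma^{-\kappa}$ prefactor, using that $\lambda_0>0$ in \eqref{e:hpmeasure} leaves enough room to keep $\lambda_0-\kappa>0$ for $\kappa$ small.
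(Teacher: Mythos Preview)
Your approach is exactly the one the paper has in mind (the paper gives no separate proof for the corollary, treating it as an immediate consequence of Proposition~\ref{prop:nonlineardynequation} and assumption~\eqref{e:hpmeasure}): rewrite $\pptg$ in Hermite form via \eqref{e:polyrenorm}, subtract the limiting coefficients, and bound the resulting Hermite polynomials in $L^\infty$ using Remark~\ref{rmk:Hermitepoly} and Lemma~\ref{lemma:infinitybesovbound}.

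One small slip in your justification: the factors $\epsilon^{-\nu}$ coming from the Besov embedding are \emph{not} ``slower than any negative power of $\gamma$''. Since $\epsilon\simeq\gamma^{n}$ by \eqref{e:scaling}, one has $\epsilon^{-(2n-1)\nu}\simeq\gamma^{-n(2n-1)\nu}$, a genuine (small) negative power of $\gamma$. The conclusion is nevertheless correct: this power, together with the logarithmic factor $\CG^{n-1}\lesssim\gamma^{-\kappa'}$, is absorbed into $\gamma^{-\kappa}$ simply by relabelling $\kappa'+n(2n-1)\nu\mapsto\kappa$, which is legitimate because the statement is for arbitrary small $\kappa>0$ with constant depending on $\kappa$ and $\nu$. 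With that correction the argument is complete.
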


\subsection{Da Prato - Debussche trick}

We are now ready to apply the idea of Da Prato and Debussche \cite{dPD} in our context, as it was applied in \cite{MourratWeber}. As described in Subsection~\ref{subsec:limitingspde}, the trick relies in the decomposition of the solution $\Xg$ into the linear term $\Zg$ approximation of the stochastic heat equation, and a reminder with finite quadratic variation, solving a PDE problem with random coefficients.\\
The treatment follows closely \cite{MourratWeber} and \cite{ShenWeber}, the only difference is given by the fact that in our case the process is multidimensional and an arbitrary quantity of Wick powers have to be controlled.\\
For $0 \leq t \leq T$ we will define the following approximation
\begin{equation}
\label{e:approximateX}
\bar{X}_{\gamma}(\cdot,t) \eqdef P_{t}\Xn(\cdot) + \Zg(\cdot,t) + \mathcal{S}_T\left(\left(\Zg^{:\kb:} \right)_{|\kb|\leq 2n-1}\right)(\cdot,t)
\end{equation}
where $\Xn$ is the initial condition for the continuous process (see also Assumption~\ref{e:hp1}) and $\mathcal{S}_T$ is the solution map described in Subsection~\ref{subsec:limitingspde}. Recall that, for any $\kappa>0$ and $\nu>0$, $\mathcal{S}_T$ is Lipschitz continuous from $L^{\infty}([0,T]; (\Cc^{-\nu})^{n^*})$ to $\Cc([0,T];\Cc^{2-\nu-\kappa})$ with $n^* = \binom{2n-2 + m}{m-1}$. In particular, by theorem \ref{Cap2Heateq} we have that the process $\bar{X}_{\gamma}$ converges in distribution to the solution $X$ of the SPDE \eqref{e:cap2limitingspde} as described in theorem \ref{thm:existuniqlimit}.\\
Since 
\[
\Norm{\Cc^{-\nu}}{ P_{t}\Xn  - \Pg{t}\Xng   } \leq \Norm{\Cc^{-\nu}}{ P_{t}(\Xn  - \Xng )  } + \Norm{\Cc^{-\nu}}{ (P_{t} - \Pg{t})\Xng   } \;.
\]
From \cite[Lemma~7.3]{MourratWeber} we have that
\[
\lim_{\gamma \to 0}\sup_{0 \leq t \leq T} \Norm{\Cc^{-\nu}}{ P_{t}\Xn  - \Pg{t}\Xng   } = 0\;.
\]
It remains to control the difference between
\begin{align}
\label{e:vdef}
v_{\gamma} (x,t) &= \Xg(x,t) - \Zg(x,t) - \Pg{t}\Xng(x,t)\\
\label{e:vbardef}
\bar{v}_{\gamma}(x,t) &= \bar{X}_{\gamma}(x,t) - \Zg(x,t) - P_{t}\Xn(x,t)\;.
\end{align}
For $t \in [0,T]$, $x \in \T^2$. In order to do so, it is more convenient to start the reminder processes $v_{\gamma}$ and $\bar{v}_{\gamma}$ from zero and add the initial condition to the martingales. This can be done rearranging the contribution of the initial condition and defining, for $\kb \in \N^m$
\begin{align}
\tilde{Z}_{\gamma} &\eqdef \Pg{t} \Xng+\Zg &
\bar{Z}_{\gamma}^{:\kb:} &\eqdef \sum_{\substack{ \mathbf{a}\in \N^m\\ \mathbf{a} \leq \kb}} (P_{t} \Xn)^{\mathbf{a}} \Zg^{:\kb-\mathbf{a}:} \;.
\label{e:barZpowerdefin}
\end{align}
The last relation is similar to \eqref{e:hermitesum} for the Hermite polynomial and
\[
H\left((\tilde{Z}_{\gamma} + v_{\gamma} )^{\kb}, \CG \right) = \sum_{\substack{ \mathbf{a}\in \N^m\\ \mathbf{a} \leq \kb}} v_{\gamma}^{\mathbf{a}} H\left(\tilde{Z}_{\gamma}^{\kb - \mathbf{a}}, \CG \right)\;.
\]
Recall the heat kernel regularization properties of \cite[Cor.~8.7]{MourratWeber}, for $\lambda > - \nu$
\[
\Norm{\Cc^{\lambda}}{(P_{t} {\Xn})^{(j)}} \leq C(\lambda) t^{-\frac{\lambda + \nu}{2}} \Norm{\Cc^{-\nu}}{{\Xn}^{(j)}} \leq C(\lambda) t^{-\frac{\lambda + \nu}{2}} \Norm{(\Cc^{-\nu})^m}{\Xn} 
\]
and the Besov multiplicative inequality 
\begin{multline}
\label{e:initialcondinequality}
\Norm{\Cc^{-\nu}}{\bar{Z}_{\gamma}^{:\kb:}(\cdot,t)} \leq C(\nu,\kappa)\sum_{\substack{ \mathbf{a}\in \N^m\\ \mathbf{a} \leq \kb}} \sup_{j = 1,\dots,m} \Norm{\Cc^{\nu + \kappa}}{(P_{t} {\Xn})^{(j)}}^{|\mathbf{a}|}\Norm{\Cc^{-\nu}}{\Zg^{:\kb-\mathbf{a}:}(\cdot,t) } \\
\leq C(\nu,\kappa)\sum_{\substack{ \mathbf{a}\in \N^m\\ \mathbf{a} \leq \kb}} t^{- \left(\nu +\frac{ \kappa}{2}\right)|\mathbf{a}|} \Norm{(\Cc^{-\nu})^m}{\Xn}^{|\mathbf{a}|} \Norm{\Cc^{-\nu}}{\Zg^{:\kb-\mathbf{a}:} (\cdot,t)} \\
\leq C\left(n,\nu,\kappa,T,\Norm{(\Cc^{-\nu})^m}{\Xn}\right)  t^{- \left(\nu +\frac{ \kappa}{2}\right) (2n-1)}   \sup_{\substack{ \mathbf{a}\in \N^m\\ |\mathbf{a}| \leq 2n-1}} \Norm{\Cc^{-\nu}}{\Zg^{:\mathbf{a}:} (\cdot,t)}
\end{multline}
for every $\kb \in \N^m$ with $|\kb| \leq 2n-1$, the degree of the polynomial $\pp$.\\
This allows us to write, using the definition of $\mathcal{S}_T$ in Section~\ref{subsec:limitingspde}, the relation for the $j$-th component of \eqref{e:vdef} and \eqref{e:vbardef}
\[
\bar{v}_{\gamma}^{(j)} (\cdot,t) = \int_0^t P_{t-s}  \overline{\Psi}^{(j)}\left(s, (\bar{Z}_{\gamma}^{:\kb:})_{|\kb| \leq 2n-1} \right)\left(\bar{v}_{\gamma} (\cdot,s)\right) ds
\]
while a similar expression holds for $v_{\gamma}^{(j)}$ in virtue of Proposition~\ref{prop:nonlineardynequation} and corollary \ref{cor:nonlineardynequation}
\[
v_{\gamma}^{(j)} (\cdot,t) =\int_0^t \Pg{t-s} \Kg \star \left( \pp^{(j)}(\tilde{Z}_{\gamma}(\cdot,s) + v_{\gamma} (\cdot,s)) + \Err_1^{(j)}(\cdot,s)\right) ds
\]
with $\Err_1$ satisfying the bound in corollary \ref{cor:nonlineardynequation}.
The next proposition allows a control over the nonlinear part in a space of functions rather than distributions. It corresponds to  in \cite[Lemma~7.5]{MourratWeber}  and \cite[Lemma~4.8]{ShenWeber}
\begin{proposition}
\label{prop:propvdiffstat}
There exists sufficiently small $\nu > 0$ and $\kappa >0$, such that for all $T >0$ the following inequality holds
\begin{multline}
\label{e:propvdiffstat}
\Norm{\Cc^{\frac{1}{2}}}{\bar{v}_{\gamma}^{(j)} (\cdot,t) - v_{\gamma}^{(j)} (\cdot,t)} \leq C_1 \int_0^t (t-s)^{-\frac{1}{3}} s^{-\frac{1}{6}} \Norm{(\Cc^{1/2})^m}{\bar{v}_{\gamma} (\cdot,s) - v_{\gamma} (\cdot,s)} ds \\
+ C_1\left( \Norm{\Cc^{-\nu}}{\Xn - \Xng}  + \gamma^{\frac{1}{2}}+ \gamma^{ \lambda_0 - 2\kappa}\right) + C_2 |\overline{\Err_2}(t)|
\end{multline}
and the error term satisfies
\begin{equation}
\label{e:propvdifferror}
\E\left[\sup_{t \leq T} |\overline{\Err_2}(t) |\right] \leq C(T,\nu,\nn,\kappa,n) (\gamma\epsilon^{-\kappa} + \gamma^{1 - \kappa} )
\end{equation}
\end{proposition}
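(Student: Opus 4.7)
The plan is to subtract the two mild formulations written just before the statement — $\bar v_\gamma^{(j)}(\cdot,t) = \int_0^t P_{t-s}\,\overline\Psi^{(j)}(s,\bar Z_\gamma^{:\kb:})(\bar v_\gamma(\cdot,s))\,ds$ and $v_\gamma^{(j)}(\cdot,t) = \int_0^t \Pg{t-s}\Kg\star(\pp^{(j)}(\tilde Z_\gamma + v_\gamma) + \Err_1^{(j)})(\cdot,s)\,ds$ — and bound the integrand in $\Cc^{1/2}$ term by term. For $v_\gamma^{(j)}$ I would first use \eqref{e:polyrenormwiths}--\eqref{e:coeffdefwiths} combined with the Hermite identity \eqref{e:hermitesum} to rewrite $\pp^{(j)}(\tilde Z_\gamma+v_\gamma)(\cdot,s)$ as a polynomial in $v_\gamma$ of the form $\sum_{|\mathbf a|+|\mathbf b|\le 2n-1} b^{\gamma,(j)}_{\mathbf a,\mathbf b}(s)\,v_\gamma^{\mathbf a}\,H(\tilde Z_\gamma^{\mathbf b},\CG(s))$, with the $\gamma$-dependent coefficients inherited from $\aag{\cdot}(s)$. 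This puts the two nonlinearities in the same functional form, so the difference may be handled monomial by monomial.

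For each pair $(\mathbf a,\mathbf b)$ I would telescope the difference of a single monomial through four replacements:
\begin{equation*}
b^{\gamma,(j)}_{\mathbf a,\mathbf b}(s)\,v_\gamma^{\mathbf a}\,H(\tilde Z_\gamma^{\mathbf b},\CG(s))\;\rightsquigarrow\; b^{(j)}_{\mathbf a,\mathbf b}(s)\,\bar v_\gamma^{\mathbf a}\,\bar Z_\gamma^{:\mathbf b:}
\end{equation*}
namely (i) swap $b^{\gamma,(j)}_{\mathbf a,\mathbf b}(s)$ for $b^{(j)}_{\mathbf a,\mathbf b}(s)$ using the bound on $\aag{\cdot}-\aa{\cdot}$ in \eqref{e:hpmeasure} together with the $|A(s)|^{\cdot}$-growth in \eqref{e:epseqboundoncoeff}, producing a $\gamma^{\lambda_0-2\kappa}$ error; (ii) swap $H(\tilde Z_\gamma^{\mathbf b},\CG(s))$ for the iterated-integral Wick product $\tilde Z_\gamma^{:\mathbf b:}$ via Proposition~\ref{prop:hermiteRbound}, producing a $\gamma^{1-\nu-\kappa}$ error; (iii) swap $\tilde Z_\gamma^{:\mathbf b:}$ for $\bar Z_\gamma^{:\mathbf b:}$ by reshuffling the initial condition according to \eqref{e:barZpowerdefin} (this is where $\Norm{\Cc^{-\nu}}{\Xn-\Xng}$ enters, using \cite[Lemma~7.3]{MourratWeber}-style heat-flow approximation); and (iv) swap $v_\gamma^{\mathbf a}$ for $\bar v_\gamma^{\mathbf a}$, which is the only step producing a term linear in $\bar v_\gamma-v_\gamma$ and is handled by the Besov product Proposition~\ref{prop:productbesov}.

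Once these differences are integrated against $P_{t-s}$, the heat-semigroup regularization $\Cc^{-\nu-\kappa}\to\Cc^{1/2}$ produces a factor $(t-s)^{-1/3}$ for sufficiently small $\nu,\kappa$, and the $s^{-1/6}$ factor comes from the singular behaviour of $\bar Z_\gamma^{:\mathbf b:}$ near $s=0$ encoded in \eqref{e:initialcondinequality} (total exponent $(2n-1)(\nu+\kappa/2)$, kept below $1/6$ by shrinking $\nu$ and $\kappa$). The decay \eqref{e:epseqboundoncoeff} of $b_{\mathbf a,\mathbf b}^{(j)}(s)$ is used to absorb the $\Cc^{1/2}$-norm of $\bar v_\gamma^{\mathbf a}$ for $|\mathbf a|\ge 2$, using that $\bar v_\gamma$ is already known to be bounded in $\Cc^{1/2}$ on $[0,T]$. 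The replacement of $\Pg{t-s}\Kg\star$ by $P_{t-s}$ in the discrete equation contributes the $\gamma^{1/2}$ term via a standard discrete-vs-continuous heat-kernel estimate, while the contribution of $\Err_1^{(j)}$ becomes the $\overline{\Err_2}(t)$ term: its moment bound \eqref{e:propvdifferror} follows from Corollary~\ref{cor:nonlineardynequation} combined with the stopping-time bound $\Norm{\Cc^{-\nu}}{\Xg(\cdot,s)}\le\nn$, Lemma~\ref{lemma:highfreqcontrol} for $\Norm{L^\infty}{\XgH}$, and Proposition~\ref{prop:cap2boundsonR} for a uniform moment bound on $\tilde Z_\gamma^{:\kb:}$.

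The main obstacle is the bookkeeping of the four telescoping replacements for every multi-index $(\mathbf a,\mathbf b)$ with $|\mathbf a|+|\mathbf b|\le 2n-1$: one has to verify that \emph{each} of the cross terms generated carries a prefactor that is either a small positive power of $\gamma$ (to go into $\overline{\Err_2}$ or into the $\gamma^{1/2}+\gamma^{\lambda_0-2\kappa}$ terms) or is exactly of the form $C(t-s)^{-1/3}s^{-1/6}\Norm{(\Cc^{1/2})^m}{\bar v_\gamma-v_\gamma}$. Making the two singular exponents simultaneously work for every monomial of degree up to $2n-1$ is the tightest constraint, and forces the condition that $\nu,\kappa$ be chosen sufficiently small depending on $n$; this is the multivariate/high-degree extension of \cite[Lemma~7.5]{MourratWeber} and \cite[Lemma~4.8]{ShenWeber} that was not previously carried out.
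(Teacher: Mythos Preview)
Your plan follows essentially the same route as the paper: split $\bar v_\gamma^{(j)}-v_\gamma^{(j)}$ into a heat-kernel mismatch $P_{t-s}$ versus $\Pg{t-s}\Kg$, a nonlinearity mismatch, and the $\Err_1$ contribution, then expand both nonlinearities into monomials and telescope. The paper organises the telescoping slightly differently --- it first absorbs the coefficient swap $\aag{}\to\aa{}$ into $\Err_1$ via Corollary~\ref{cor:nonlineardynequation}, and then expands over \emph{triples} $(\mathbf a,\mathbf b,\mathbf c)$ indexing powers of $v$, of the heat-flowed initial datum, and Wick powers of $Z_\gamma$ separately --- but this is cosmetic.

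There is, however, one genuine gap in your sketch. In step~(ii) you say Proposition~\ref{prop:hermiteRbound} ``produces a $\gamma^{1-\nu-\kappa}$ error''. That proposition only bounds the $L^p$ \emph{moments} of the random quantity $\|Z_\gamma^{:\mathbf c:}-H_{\mathbf c}(Z_\gamma,\CG(s))\|_{L^\infty(\Le)}$ (and its bound has a second term $\gamma^{-\kappa}(1\wedge s^{-b}\alpha^b)$, singular at $s=0$, which you omit). In the pathwise inequality~\eqref{e:propvdiffstat} this random error sits behind a random prefactor that is polynomial in $\|v_\gamma\|_{\Cc^{1/2}}$ and $\|Z_\gamma^{:\mathbf a:}\|_{\Cc^{-\nu}}$ --- i.e.\ behind $C_1$ --- yet the statement demands that $\E[\sup_t|\overline{\Err_2}(t)|]$ be bounded by a constant that does \emph{not} depend on $v_\gamma$. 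H\"older alone cannot deliver this, since a priori no uniform moment bound on $\|v_\gamma\|_{\Cc^{1/2}}$ is available at this stage. The paper's remedy is Young's inequality $C\cdot A\cdot B\le\tfrac12(C^2\gamma^{-1}A^2+\gamma B^2)$: taking $A$ equal to the $L^p$-bound from Proposition~\ref{prop:hermiteRbound} and $B$ the normalised random error, the piece $C^2\gamma^{-1}A^2$ joins the $C_1\gamma^{1/2}$ term while $\gamma B^2$, now free of any random prefactor, goes into $\overline{\Err_2}$ and satisfies~\eqref{e:propvdifferror}. The same splitting is applied to the $\|\ZgH\|_{L^\infty}$ contribution coming from Corollary~\ref{cor:nonlineardynequation}, whose prefactor also picks up $\|v_\gamma\|_{\Cc^{1/2}}$ through $v_\gamma^{high}$. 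Without this device the separation of $C_1$ and $C_2$ in the statement cannot be achieved; you should build it into your steps~(ii) and the treatment of $\Err_1$.
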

with the constant $C_1$ depending on $\nu,\kappa,T ,n, \Norm{ (\Cc^{-\nu})^m  }{\Xng},\sup_{s \leq T}\Norm{(\Cc^{-\nu})^m }{\Zg^{:\mathbf{a}:}(\cdot,s)}$ with $|\mathbf{a}|\leq 2n-1$ and $\sup_{s \leq T} \Norm{(\Cc^{1/2})^m}{v_{\gamma}(\cdot,s)}$, while $C_2$ depends on $T,\nu,\kappa,n$.
\begin{proof}
We are going to give a complete proof of this bound since it is the central ingredient for the proof of the main theorem. To keep the formulas light, we will use $L^p$ in place of $L^p(\T^2)$.\\
Decompose the difference into
\begin{multline}
\label{e:nonlinearprocdiff}
\bar{v}_{\gamma}^{(j)} (\cdot,t) - v_{\gamma}^{(j)} (\cdot,t) =  \int_0^t \left(P_{t-s} - \Pg{t-s}\Kg \right)\star \overline{\Psi}^{(j)}\left(s, (\bar{Z}_{\gamma}^{:\kb:})_{|\kb| \leq 2n-1} \right)\left(\bar{v}_{\gamma} (\cdot,s)\right) ds \\
+ \int_0^t \Pg{t-s}\Kg \star \left(\overline{\Psi}^{(j)}\left(s, (\bar{Z}_{\gamma}^{:\kb:})_{|\kb| \leq 2n-1} \right)\left(\bar{v}_{\gamma} (\cdot,s)\right) -  \pp^{(j)}\left(\tilde{Z}_{\gamma}(\cdot,s) + v_{\gamma} (\cdot,s)\right) \right) ds \\
- \int_0^t \Pg{t-s} \Kg \star \Err_2^{(j)}(\cdot,s) ds\;.
\end{multline}
The  first term in \eqref{e:nonlinearprocdiff} is bounded in $\Cc^{\frac{1}{2}}$ using Lemma~\ref{lemma:boundsonsemigroup} with $\lambda$ and $\kappa$ satisfying $-\lambda - \frac{1}{4}-\frac{\nu}{2} -\kappa > -1$ and $\left(\nu + \frac{\kappa}{2}\right)(2n-1) < 1$
\begin{multline}
\label{e:vdifferr1}
\int_0^t\Norm{\Cc^{\frac{1}{2}}}{ \left(P_{t-s} - \Pg{t-s}\Kg \right)\star \overline{\Psi}^{(j)}\left(s, (\bar{Z}_{\gamma}^{:\kb:})_{|\kb| \leq 2n-1} \right)\left(\bar{v}_{\gamma} (\cdot,s)\right) }ds \\
\leq C(T,\lambda,\kappa) \int_0^t (t-s)^{-\lambda - \frac{1}{4}-\frac{\nu}{2} -\kappa} \gamma^{\lambda}  \Norm{L^{\infty}([0,T],\Cc^{-\nu})}{\overline{\Psi}^{(j)}\left(s, (\bar{Z}_{\gamma}^{:\kb:})_{|\kb| \leq 2n-1} \right)\left(\bar{v}_{\gamma}(\cdot,s)\right)}ds\\
\leq C\gamma^{\lambda}  \int_0^t (t-s)^{-\lambda - \frac{1}{4}-\frac{\nu}{2} -\kappa}  s^{-\left(\nu+ \frac{\kappa}{2}\right)(2n-1) } ds \leq C \gamma^{\lambda}
\end{multline}
where the last constant depends on $T,\lambda,\nu,\kappa,\Norm{L^{\infty}([0,T];\Cc^{\frac{1}{2}})}{\bar{v}_{\gamma}}, \Norm{L^{\infty}([0,T];\Cc^{-\nu})}{\Zg^{:\mathbf{a}:} }$.\\
The third part of \eqref{e:nonlinearprocdiff} 
\[
\sup_{0 \leq t \leq T} \Norm{\Cc^{\frac{1}{2}}}{ \int_0^t \Pg{t-s}\Kg \star \Err_1^{(j)}(\cdot,s) ds } \leq C(T) \int_0^T (T-s)^{-\frac{1}{4}- \kappa} \Norm{L^{\infty}}{\Err_2^{(j)}(\cdot,s)} ds
\]
is bounded with corollary \ref{cor:nonlineardynequation}: in particular we will need the following bounds provided by Lemma~\ref{lemma:boundsonsemigroup}
\[
\Norm{(\Cc^{-\nu})^m}{\Xg(\cdot,s)} \leq \Norm{(\Cc^{-\nu})^m}{\Zg(\cdot,s)} +\Norm{(\Cc^{-\nu})^m}{\Pg{s}\Xng} + \Norm{(\Cc^{1/2})^m}{v_{\gamma}(\cdot,s)}
\]
\begin{multline*}
\Norm{(L^{\infty})^m}{\XgH(\cdot,s)} \leq \Norm{(L^{\infty})^m}{\ZgH(\cdot,s)} +\Norm{(L^{\infty})^m}{(\Pg{s}\Xng)^{high}}+ \Norm{(L^{\infty})^m}{v_{\gamma}^{high}(\cdot,s)} \\
\lesssim  \Norm{(L^{\infty})^m}{\ZgH(\cdot,s)} + \left(\epsilon \gamma^{-1}\right)^{\lambda} t^{-\lambda -\frac{n \nu}{2(n-1)}} \Norm{\Cc^{-\nu}}{\Xng} +  \left(\epsilon \gamma^{-1}\right)^{\frac{1}{2}}\Norm{(\Cc^{1/2})^m}{v_{\gamma}(\cdot,s)}
\end{multline*}
that implies that the left hand side of \eqref{e:propvdifferror} is bounded by
\begin{gather*}
\leq C \left(\gamma^{-\kappa}\epsilon^{-2(n-1)\nu} \left(\Norm{(L^{\infty})^m}{\ZgH(\cdot,s)} + \left(\epsilon \gamma^{-1}\right)^{1/2} \right) + \gamma^{2} \epsilon^{-(2n+1)\nu} + \gamma^{\lambda_0 - \kappa}\right)
\end{gather*}
for $\lambda = 1/2$ and values of $\nu$ and $\kappa$ small enough. The value of the constant $C$ depends on $T,\nu,\kappa,n, \sup_{s \leq T} , \Norm{ (\Cc^{-\nu})^m  }{\Xng}$  and it is a polynomial function of the random quantities $\Norm{(\Cc^{-\nu})^m }{\Zg(\cdot,s)}, \sup_{s \leq T} \Norm{(\Cc^{1/2})^m}{v_{\gamma}(\cdot,s)}$. Using the inequality $Cab \leq \frac{1}{2}(C^2 \gamma a^2 + \gamma^{-1}b^2)$, we separate the constant and $\Norm{(L^{\infty})^m}{\ZgH(\cdot,s)}$. In particular we have that the error is controlled, changing the value of the constant if needed, by
\begin{multline}
\label{e:vdifferr2}
\sup_{t \leq T}\Norm{\Cc^{\frac{1}{2}}}{ \int_0^t \Pg{t-s}\Kg \star \Err_2^{(j)}(\cdot,s) ds } \\
\leq C \gamma^{-\kappa'}\left( \gamma^{1 \vee \lambda_0} + (\epsilon \gamma^{-1})^{1/2}\right) + \gamma^{-1}\sup_{s \leq T}\Norm{(L^{\infty})^m}{\ZgH(\cdot,s)}^2 ds
\end{multline}
for $\kappa'$ small enough dependent on $\kappa, \nu, n$. Lemma~\ref{lemma:highfreqcontrol} offers a control in expectation of the high frequencies of $\Zg$ and completes the treatment of the error term.\\
It remains to bound the second term of \eqref{e:nonlinearprocdiff}. In order to do this we make use of the expression for $\overline{\Psi}$ defined in \eqref{e:barPsidef}
\[
 \overline{\Psi}^{(j)}\left(s, (\bar{Z}_{\gamma}^{:\kb:})_{|\kb| \leq 2n-1} \right)\left(\bar{v}_{\gamma} (\cdot,s)\right) =  \sum_{ |\mathbf{b}| + |\mathbf{a}|\leq 2n-1} b_{\mathbf{a},\mathbf{b}}^{(j)}(s) \bar{v}_{\gamma}^{\mathbf{a}}(\cdot,s)\bar{Z}_{\gamma}^{:\mathbf{b}:}(\cdot,s)
\]
and using \eqref{e:barZpowerdefin}
\begin{multline}
\overline{\Psi}^{(j)}\left(s, (\bar{Z}_{\gamma}^{:\kb:})_{|\kb| \leq 2n-1} \right)\left(\bar{v}_{\gamma} (\cdot,s)\right) -  \pp^{(j)}\left(\tilde{Z}_{\gamma}(\cdot,s) + v_{\gamma} (\cdot,s)\right) \\
= \sum_{ |\mathbf{b}| + |\mathbf{a}|\leq 2n-1} b_{\mathbf{a},\mathbf{b}}^{(j)}(s) \left( \bar{v}_{\gamma}^{\mathbf{a}}(\cdot,s)\bar{Z}_{\gamma}^{:\mathbf{b}:}(\cdot,s) - v_{\gamma}^{\mathbf{a}}(\cdot,s)\tilde{Z}_{\gamma}^{:\mathbf{b}:}(\cdot,s) \right) \\
= \sum_{\mathbf{a},\mathbf{b},\mathbf{c}} b_{\mathbf{a},\mathbf{b},\mathbf{c}}^{(j)}(s) \Bigg( \bar{v}_{\gamma}^{\mathbf{a}}(\cdot,s)  (P_s\Xn)^{\mathbf{b}}(\cdot,s) \Zg^{:\mathbf{c}:}(\cdot,s) - v_{\gamma}^{\mathbf{a}}(\cdot,s)  (\Pg{s}\Xng)^{\mathbf{b}}(\cdot,s) H_{\mathbf{c}}(\Zg(s),\CG(s))\Bigg)
\label{e:polydecomposition}
\end{multline}
for some real $b_{\mathbf{a},\mathbf{b},\mathbf{c}}^{(j)}(s)$ growing like a power of $\log(s^{-1})$ as $s \to 0$, and satisfying $|b_{\mathbf{a},\mathbf{b},\mathbf{c}}^{(j)}(s)| \leq C(T,\kappa)s^{-\kappa}$ for $s \in [0,T]$. It is sufficient to bound each term in the sum of \eqref{e:polydecomposition}.
Using the Besov multiplicative inequality \ref{prop:productbesov},
\begin{multline*}
\Norm{\Cc^{-\nu}}{  \left(\bar{v}_{\gamma}^{\mathbf{a}}(\cdot,s)  -v_{\gamma}^{\mathbf{a}}(\cdot,s) \right) (P_s\Xn)^{\mathbf{b}}(\cdot,s) \Zg^{:\mathbf{c}:}(\cdot,s)   } \\
\leq \Norm{\Cc^{\frac{1}{2}}}{\bar{v}_{\gamma}^{\mathbf{a}}(\cdot,s)  -v_{\gamma}^{\mathbf{a}}(\cdot,s)}\Norm{\Cc^{-\nu}}{  (P_s\Xn)^{\mathbf{b}}(\cdot,s) \Zg^{:\mathbf{c}:}(\cdot,s)   } \\
\lesssim  \Norm{(\Cc^{\frac{1}{2}})^m}{\bar{v}_{\gamma} (s)  -v_{\gamma}(s) } \Norm{(\Cc^{\frac{1}{2}})^m}{|\bar{v}_{\gamma}(s)|+|v_{\gamma}(s)|}^{|\mathbf{a}|-1} \Norm{(\Cc^{\nu+\kappa})^m}{P_s\Xn(s) }^{|\mathbf{b}|}\Norm{\Cc^{-\nu}}{\Zg^{:\mathbf{c}:}(s)}  
\end{multline*}
where $\Norm{(\Cc^{\nu+\kappa})^m}{P_s\Xn(s) } \leq C(\nu,\kappa) s^{-2\nu - \kappa}\Norm{(\Cc^{-\nu})^m}{\Xn}$. And similarly
\begin{multline*}
\Norm{\Cc^{-\nu}}{  v_{\gamma}^{\mathbf{a}}(s) \left((P_s\Xn)^{\mathbf{b}}(s) - (\Pg{s}\Xng)^{\mathbf{b}}(s)\right)\Zg^{:\mathbf{c}:}(s)   } \\
\leq \Norm{(\Cc^{\frac{1}{2}})^m}{v_{\gamma} (s)}^{|\mathbf{a}|-1} s^{-(|\mathbf{b}|-1)(2\nu + \kappa)} \left(\Norm{(\Cc^{-\nu})^m}{ \Xn } + \Norm{(\Cc^{-\nu})^m}{ \Xng} \right)^{|\mathbf{b}|-1} \Norm{\Cc^{-\nu}}{\Zg^{:\mathbf{c}:}(s)} 
\end{multline*}
where we used \cite[Lemma~7.3]{MourratWeber}. We get
\begin{multline*}
\leq C s^{-(2n-1)(2\nu-\kappa) }\Bigg( \Norm{(\Cc^{\frac{1}{2}})^m}{\bar{v}_{\gamma} (s)  -v_{\gamma}(s) } + \Norm{(\Cc^{-\nu})^m}{\Xn - \Xng} \\
+ \epsilon^{-\kappa}\sup_{|\mathbf{a}|\leq 2n-1}\Norm{L^{\infty}(\Le)}{\Zg^{:\mathbf{a}:}(s) - H_{\mathbf{a}}(\Zg(s),\CG(s))}\Bigg)
\end{multline*}
where the constant depends on $\nu,\kappa,n,T,\Norm{\Cc^{-\nu}}{\Xn},\Norm{(\Cc^{-\nu})^m}{\Xng},\sup_{|\mathbf{a}| \leq 2n-1}\Norm{\Cc^{-\nu}}{\Zg^{:\mathbf{a}:}}$ as well as on $\Norm{L^{\infty}([0,T];(\Cc^{\frac{1}{2}})^m)}{v_{\gamma}},\Norm{L^{\infty}([0,T];(\Cc^{\frac{1}{2}})^m)}{\bar{v}_{\gamma}}$. The last term is estimated probabilistically with Proposition~\ref{prop:hermiteRbound}, where the supremum on the torus is bounded be the supremum on $\Le$ at a cost of an arbitrarily small negative power of $\epsilon$. Using Proposition~\ref{prop:boundsonP} we then bound the $ \Cc^{1/2}$ Besov norm of the second term in \eqref{e:nonlinearprocdiff} with the sum
\begin{multline}
\label{e:nonlindiffyoung}
C  \int_0^t (t-s)^{-\frac{1}{4}-\frac{\nu}{2}-\kappa}  s^{-\kappa} s^{-(2n-1)(2\nu-\kappa) } \Norm{(\Cc^{\frac{1}{2}})^m}{\bar{v}_{\gamma} (s)  -v_{\gamma}(s) } ds + C \Norm{(\Cc^{-\nu})^m}{\Xn - \Xng}  \\
+ C^2 \int_0^t (t-s)^{-\frac{1}{4}-\frac{\nu}{2}-\kappa}  s^{-\kappa} s^{-(2n-1)(2\nu-\kappa) } \epsilon^{-2\kappa}  \left(\gamma^{1 - \kappa - \nu} + \gamma^{-\kappa} s^{-\frac{1}{2}} \alpha^{\frac{1}{2}} \right)^2 \gamma^{-1} \\
+ \int_0^t (t-s)^{-\frac{1}{4}-\frac{\nu}{2}-\kappa}  s^{-\kappa} s^{-(2n-1)(2\nu-\kappa) } \epsilon^{-2\kappa} \gamma \sup_{|\mathbf{a}|\leq 2n-1}\frac{\Norm{L^{\infty}(\Le)}{\Zg^{:\mathbf{a}:}(s) - H_{\mathbf{a}}(\Zg(s),\CG(s))}^2}{ \left(\gamma^{1 - \kappa - \nu} + \gamma^{-\kappa} s^{-\frac{1}{2}} \alpha^{\frac{1}{2}} \right)^{2}} ds
\end{multline}
in the last line we used the inequality $CAB \leq \frac{1}{2}(C^2\gamma^{-1} A^2 + \gamma B^2)$. The last term is bounded in expectation with Proposition~\ref{prop:hermiteRbound} with $b = \frac{1}{2}$ (note the absence of any multiplicative constant in front of the last term). Using the fact 
\[
(t-s)^{-\frac{1}{4}-\frac{\nu}{2}-\kappa}  s^{-\kappa} s^{-(2n-1)(2\nu-\kappa) } \leq C(T,\kappa,\nu,n) (t-s)^{-\frac{1}{3}} s^{-\frac{1}{6}}
\]
for small enough $\kappa,\nu > 0$. Collecting together \eqref{e:nonlindiffyoung}, \eqref{e:vdifferr2} and \eqref{e:vdifferr1} with $\lambda = \frac{1}{2}$ we obtain the bound
\begin{multline*}
\Norm{\Cc^{\frac{1}{2}}}{\bar{v}_{\gamma}^{(j)} (\cdot,t) - v_{\gamma}^{(j)} (\cdot,t)} \leq C_1 \int_0^t (t-s)^{-\frac{1}{3}} s^{-\frac{1}{6}} \Norm{(\Cc^{1/2})^m}{\bar{v}_{\gamma} (\cdot,s) - v_{\gamma} (\cdot,s)} ds \\
+ C_1\left( \Norm{(\Cc^{-\nu})^m}{\Xn - \Xng}  + \gamma^{\frac{1}{2}}+ \gamma^{ \lambda_0 - 2\kappa}\right) + C_2 \overline{\Err_2}(t)
\end{multline*}
and the expectation of
\begin{multline*}
\sup_{t \leq T}|\overline{\Err_2}(t)| \leq C(T)\sup_{t \leq T}\gamma^{-1}\int_0^t (t-s)^{-\frac{1}{3}}\Norm{(L^{\infty})^m}{\ZgH(\cdot,s)}^2 ds \\
+ C(T)\gamma \epsilon^{-\kappa} \sup_{t \leq T}\int_0^t (t-s)^{-\frac{1}{3}} s^{-\frac{1}{6}}\sup_{|\mathbf{a}| \leq 2n-1}\frac{\Norm{L^{\infty}(\Le)}{\Zg^{:\mathbf{a}:}(s) - H_{\mathbf{a}}(\Zg(s),\CG(s))}^2}{ \left(\gamma^{1 - \kappa - \nu} + \gamma^{-\kappa} s^{-\frac{1}{2}} \alpha^{\frac{1}{2}}\right)^{2}} ds
\end{multline*}
is bounded by $C(T,\nu,\nn,\kappa,n) (\gamma\epsilon^{-\kappa} + \gamma^{1 - \kappa} )$ where we used the scaling \eqref{e:scaling}. In the above equation the constants are as after \eqref{e:propvdifferror}.
\end{proof}

\begin{appendix}
\appendix
\section{}
\subsection{Estimation on the kernels and the semigroup}
\label{subsec:kernel}
We collect, for reference, some of the estimates in \cite{MourratWeber}, some of which have been modified to adapt to our context. In this case we are providing a description of how the proof in \cite{MourratWeber} should be modified to accommodate our situation.
\begin{proposition}
\label{prop:Kbounds}
Consider the scaling in \eqref{e:scaling}. We have that, for $|\omega| \leq \gamma \epsilon^{-1}$ there exists a constant $C$
\begin{align*}
|\hat{\Kg}(\omega) |  &\leq 1\\
|\partial_j \hat{\Kg}(\omega) |  &\leq C\epsilon^2 \gamma^{-2}|\omega| \\
|\partial_j^2 \hat{\Kg}(\omega) |  &\leq C|\epsilon \gamma^{-1}|^2
\end{align*}
for $|\omega| \geq \gamma \epsilon^{-1}$
\begin{align*}
|\hat{\Kg}(\omega) |  &\leq C |\epsilon \gamma^{-1} \omega|^{-2}\\
|\partial_j \hat{\Kg}(\omega) |  &\leq C\epsilon \gamma^{-1}|\epsilon \gamma^{-1} \omega|^{-2} \\
|\partial_j^2 \hat{\Kg}(\omega) |  &\leq C\epsilon^2 \gamma^{-2}|\epsilon \gamma^{-1} \omega|^{-2}\;.
\end{align*}
Moreover for any $|\omega|\leq \epsilon^{-1}$ there exists a constant $c>0$
\begin{align*}
1 - \hat{\Kg}(\omega) & \geq c |\epsilon \gamma^{-1} \omega|^2\;.
\end{align*}
\end{proposition}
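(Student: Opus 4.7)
The plan is to reduce every one of the stated inequalities to a corresponding classical bound on the continuous Fourier transform $\hat{\KK}$ of the smooth compactly supported function $\KK$, via the substitution $\xi \eqdef \epsilon\gamma^{-1}\omega$. Unfolding \eqref{e:Fouriertransform} with the change of index $k = \epsilon^{-1}x$ and using \eqref{e:samplek} gives
\[
\hat{\Kg}(\omega) \;=\; \frac{1}{D_\gamma}\sum_{k \in \LN \setminus \{0\}} \gamma^2 \KK(\gamma k)\, e^{-\pi i \gamma k \cdot \xi}, \qquad D_\gamma \eqdef \sum_{k \in \LN \setminus \{0\}} \gamma^2 \KK(\gamma k).
\]
Since $\KK \in \Cc^2_c(B(0,3))$, standard quadrature error estimates give $D_\gamma = 1 + O(\gamma^2)$, and the numerator is a Riemann sum of spacing $\gamma$ for $\hat{\KK}(\xi) = \int_{\R^2} \KK(y) e^{-\pi i y \cdot \xi} dy$.

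The classical bounds on $\hat{\KK}$ then give the six upper estimates. Namely, $|\hat{\KK}(\xi)| \leq \Norm{L^1}{\KK} = 1$ yields the first; two integrations by parts (using $\KK \in \Cc^2_c$) yield the decay $|\hat{\KK}(\xi)| \leq C(1+|\xi|)^{-2}$, which gives the bound in the regime $|\omega| \geq \gamma\epsilon^{-1}$. By the chain rule $\partial_{\omega_j} = \epsilon\gamma^{-1}\partial_{\xi_j}$, each derivative costs a factor $\epsilon\gamma^{-1}$; derivatives of $\hat{\KK}$ are themselves Fourier transforms of $y^{\alpha}\KK \in \Cc^2_c$ multiplied by $(-\pi i)^{|\alpha|}$, so they obey both a uniform bound (for small $|\xi|$, by direct integration against $|y|^{|\alpha|}\KK$) and the same $|\xi|^{-2}$-type decay at large $|\xi|$. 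This matches exactly the scaling of the right-hand sides in the statement once the $\epsilon\gamma^{-1}$ factors are collected.

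For the lower bound, I would Taylor-expand $\hat{\KK}$ at the origin. Using \eqref{e:norm-kk} together with the rotational symmetry of $\KK$ (which is implicit and can be assumed without loss of generality in view of isotropy), the odd-order terms vanish and
\[
1 - \hat{\KK}(\xi) \;=\; c_0 |\xi|^2 + O(|\xi|^4), \qquad c_0 > 0,
\]
determined by $\int \KK(y)|y|^2 dy = 4$. Combined with the approximation error, this gives the claimed inequality in the small-frequency regime $|\xi| \lesssim 1$, i.e.\ $|\omega| \lesssim \gamma\epsilon^{-1}$. In the complementary regime $1 \lesssim |\xi| \leq \gamma^{-1}$, the decay $|\hat{\Kg}(\omega)| \leq C|\xi|^{-2}$ forces $\hat{\Kg}(\omega)$ to stay bounded below $1$ by a positive constant, so $1 - \hat{\Kg}(\omega)$ is bounded away from $0$, which delivers the estimate in its natural interpretation (i.e.\ against $\min(|\xi|^2,1)$, which is what is actually used in later sections to give spectral gap and heat-kernel bounds).

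The one genuinely delicate step is making the Riemann-sum-to-integral comparison uniform in $\omega$ across the full range $|\omega| \leq \epsilon^{-1}$: in the regime $|\xi| \gg 1$ the oscillating integrand varies on a scale much finer than the mesh $\gamma$, so the naive midpoint-rule estimate blows up. This is handled by Poisson summation, which expresses the discrete sum as $\sum_{\tau \in \Z^2} \hat{\KK}(\xi + \gamma^{-1}\tau)$ (up to boundary/periodicity adjustments stemming from $k \in \LN$ rather than $\Z^2$), and the aliasing images $\tau \neq 0$ are then controlled by the $|\cdot|^{-2}$ decay of $\hat{\KK}$ and contribute only $O(\gamma^2)$ errors. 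This is the same mechanism used already in \cite{MourratWeber}, so I would quote or adapt their version directly rather than redo it.
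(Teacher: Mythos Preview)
The paper does not give its own proof of this proposition; it simply says ``A proof of this proposition is given in lemma 8.1 and 8.2 of \cite{MourratWeber}.'' Your sketch is precisely the argument that underlies those lemmas: write $\hat{\Kg}(\omega)$ as a Riemann sum for $\hat{\KK}(\epsilon\gamma^{-1}\omega)$, pass to the continuous transform via Poisson summation (aliasing images controlled by the $C^2$ decay), and then read off the six upper bounds from the corresponding classical bounds on $\hat{\KK}$ and its derivatives, with the chain-rule factor $\epsilon\gamma^{-1}$ per $\omega$-derivative. You also correctly flag that the lower bound as literally stated cannot hold for $|\epsilon\gamma^{-1}\omega|\gg 1$ and that the operative inequality is against $\min(|\epsilon\gamma^{-1}\omega|^2,1)$, which is indeed what the subsequent kernel estimates (e.g.\ Lemma~\ref{lemma:est-cap2kernel3}) actually use.

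One small caveat: your Taylor step for the lower bound, and the bound $|\partial_j\hat{\Kg}(\omega)|\lesssim \epsilon^2\gamma^{-2}|\omega|$ near the origin, both use that the first moments $\int y_j\KK(y)\,dy$ vanish. The paper does not state this explicitly (only \eqref{e:norm-kk} is assumed), so your appeal to ``rotational symmetry \dots\ without loss of generality'' is doing real work here; in \cite{MourratWeber} the kernel is taken even, and that hypothesis should be carried over rather than inferred.
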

A proof of this proposition is given in lemma 8.1 and 8.2 of \cite{MourratWeber}.\\

The next lemma provides an estimate of the $L^{\infty}(\T^2)$ norm for $\Pg{t} \Kg$ that is basically lemma 8.3 in \cite{MourratWeber}
\begin{lemma}
\label{lemma:supboundonPK}
For $T > 0$, $x \in \T^2$ there exists a $C=C(T)$ we have
\begin{equation}
|\Pg{t} \Kg (x)|\leq C ( t^{-1} \wedge \epsilon^{-2}\gamma^2 )\log \gamma^{-1} \;.
\end{equation}
\end{lemma}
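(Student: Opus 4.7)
The plan is to proceed by Fourier analysis, following essentially the argument of \cite[Lem.~8.3]{MourratWeber} but carefully splitting the Fourier sum according to three regimes determined by the estimates of Proposition~\ref{prop:Kbounds}.

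First I would write, by Fourier inversion,
\[
\Pg{t}\Kg(x) \;=\; \frac{1}{4}\sum_{\omega}\hat{\Kg}(\omega)\,e^{-t\epsilon^{-2}\gamma^2(1-\hat{\Kg}(\omega))}\,e^{i\pi x\cdot\omega},
\]
using the fact that the generator $\Dg$ has Fourier symbol $-\epsilon^{-2}\gamma^2(1-\hat{\Kg}(\omega))$. Passing to absolute values inside the sum, it suffices to bound $\sum_\omega|\hat{\Kg}(\omega)|\,e^{-t\epsilon^{-2}\gamma^2(1-\hat{\Kg}(\omega))}$ by $C(t^{-1}\wedge\epsilon^{-2}\gamma^2)\log\gamma^{-1}$. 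The proof will establish the two estimates separately and then take their minimum.

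For the $\epsilon^{-2}\gamma^2\log\gamma^{-1}$ bound (valid uniformly in $t$) I would throw away the exponential factor, then split the sum at $|\omega|=\gamma\epsilon^{-1}$. Using $|\hat{\Kg}(\omega)|\leq 1$ on the low-frequency side yields $O((\gamma\epsilon^{-1})^2)=O(\gamma^2\epsilon^{-2})$ terms; on the intermediate/high range $\gamma\epsilon^{-1}<|\omega|\leq\sqrt{2}\epsilon^{-1}$ the bound $|\hat{\Kg}(\omega)|\leq C|\epsilon\gamma^{-1}\omega|^{-2}$ gives a sum $C\gamma^2\epsilon^{-2}\sum|\omega|^{-2}$, and a standard dyadic/integral comparison over that range contributes the $\log(\gamma^{-1})$ factor. (The contribution of $|\omega|>\epsilon^{-1}$ is absorbed similarly since $|\hat\Kg|\lesssim\gamma^2$ there.)

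For the $t^{-1}\log\gamma^{-1}$ bound, I would exploit the lower bound $1-\hat{\Kg}(\omega)\geq c|\epsilon\gamma^{-1}\omega|^2$ for $|\omega|\leq\epsilon^{-1}$, which turns the exponential into the Gaussian factor $e^{-ct|\omega|^2}$. Splitting again at $|\omega|=\gamma\epsilon^{-1}$: on the low-frequency part $\sum_{|\omega|\leq\gamma\epsilon^{-1}}e^{-ct|\omega|^2}$ is comparable to $\int_{\R^2}e^{-ct|\omega|^2}d\omega\lesssim t^{-1}$ (with a harmless additive constant controlled by $T$ in the regime $t\geq 1$); on the range $\gamma\epsilon^{-1}<|\omega|\leq\epsilon^{-1}$ we combine $|\hat{\Kg}(\omega)|\leq C\gamma^2\epsilon^{-2}|\omega|^{-2}$ with the exponential and simply bound $e^{-ct|\omega|^2}\leq 1$ in the integral $\int_{\gamma\epsilon^{-1}}^{\epsilon^{-1}}r^{-1}dr=\log\gamma^{-1}$, producing $C\gamma^2\epsilon^{-2}\log\gamma^{-1}$. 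In the regime $t\geq\epsilon^{2}\gamma^{-2}$ where the $t^{-1}$ bound is the relevant one, $\gamma^2\epsilon^{-2}\leq t^{-1}$ so this term is also absorbed into $Ct^{-1}\log\gamma^{-1}$. High frequencies $|\omega|>\epsilon^{-1}$ are negligible since there $|\hat{\Kg}|\lesssim\gamma^2$ and $1-\hat{\Kg}\geq 1/2$, producing at most $C\gamma^2 e^{-ct\epsilon^{-2}\gamma^2}$.

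The main technical point, and essentially the only place requiring attention, is the bookkeeping of the logarithmic factor: it appears solely from the harmonic sum $\sum_{\gamma\epsilon^{-1}<|\omega|\leq\epsilon^{-1}}|\omega|^{-2}\asymp\log\gamma^{-1}$, and one has to verify that in both regimes (small $t$ and large $t$) this is the dominant contribution so that neither bound is improvable in a way that would make the final $\min$ inconsistent. Taking the minimum of the two estimates then yields the claim, with the constant depending on $T$ through the step that bounds $\sum_{\omega\in\Z^2}e^{-ct|\omega|^2}$ by $C(T)t^{-1}$ on $[0,T]$.
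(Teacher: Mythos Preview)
Your proposal is correct and follows precisely the Fourier-analytic strategy of \cite[Lem.~8.3]{MourratWeber}, which is all the paper invokes here; the decomposition at $|\omega|=\gamma\epsilon^{-1}$, the use of the Gaussian lower bound from Proposition~\ref{prop:Kbounds}, and the identification of the logarithm from the harmonic sum over $\gamma\epsilon^{-1}<|\omega|\lesssim\epsilon^{-1}$ are exactly the ingredients needed. One small cosmetic point: since $\Kg$ is supported on $\Le$ its Fourier expansion only involves $\omega\in\{-N,\dots,N\}^2$, so your ``$|\omega|>\epsilon^{-1}$'' regime refers only to the corners $|\omega|\in(\epsilon^{-1},\sqrt 2\,\epsilon^{-1}]$, which you treat correctly.
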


\begin{lemma}
\label{lemma:est-cap2kernel3}
For $\gamma$ small enough
\begin{equation}
\sup_{t \geq 0}\int_0^t \sum_{\omega:|\omega|\leq \epsilon^{-1}} |\widehat{\Pg{t-s} \Kg}(\omega)|^2 ds \leq\frac{1}{2} \sum_{0 < |\omega|\leq \epsilon^{-1}} \frac{|\hat{\Kg}(\omega)|^2}{\epsilon^{-2}\gamma^2 (1 - \hat{\Kg}(\omega))} \lesssim \log(\gamma^{-1})\;.
\end{equation}
\end{lemma}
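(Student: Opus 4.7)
The starting point is an explicit Fourier representation. Since $\Pg{t}$ is the semigroup generated by $\Delg = \frac{\gamma^2}{\epsilon^2}(\Kg \ae \cdot - \mathrm{Id})$, we have
\[
\widehat{\Pg{t-s}\Kg}(\omega) \;=\; \hat{\Kg}(\omega)\,\exp\!\bigl(-(t-s)\,\epsilon^{-2}\gamma^2(1-\hat{\Kg}(\omega))\bigr),
\]
so the time integral may be computed exactly: for each $\omega\neq 0$ (where $1-\hat{\Kg}(\omega)>0$ by Proposition~\ref{prop:Kbounds}),
\[
\int_0^t|\hat{\Kg}(\omega)|^2 e^{-2(t-s)\epsilon^{-2}\gamma^2(1-\hat{\Kg}(\omega))}\,ds \;=\; \frac{|\hat{\Kg}(\omega)|^2\bigl(1-e^{-2t\epsilon^{-2}\gamma^2(1-\hat{\Kg}(\omega))}\bigr)}{2\epsilon^{-2}\gamma^2(1-\hat{\Kg}(\omega))}.
\]
Dropping the bracket (which is in $[0,1]$) and taking the supremum over $t\geq 0$ gives the first asserted inequality. (The zero mode contributes nothing under the convention $0<|\omega|\le \epsilon^{-1}$, which is the sum that genuinely appears in the estimates of Section~\ref{sec:linear}.)

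For the second inequality, the plan is to combine the two bounds provided by Proposition~\ref{prop:Kbounds}: the uniform bound $|\hat{\Kg}(\omega)|\le 1$ in the numerator and the lower bound $1-\hat{\Kg}(\omega)\ge c|\epsilon\gamma^{-1}\omega|^2$ in the denominator. This yields, for every $0<|\omega|\le \epsilon^{-1}$,
\[
\frac{|\hat{\Kg}(\omega)|^2}{\epsilon^{-2}\gamma^2(1-\hat{\Kg}(\omega))}\;\le\; \frac{1}{\epsilon^{-2}\gamma^2\cdot c\,\epsilon^2\gamma^{-2}|\omega|^2}\;=\;\frac{1}{c\,|\omega|^2},
\]
so the whole sum is controlled by $c^{-1}\sum_{0<|\omega|\le \epsilon^{-1}}|\omega|^{-2}$.

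The final step is the standard two-dimensional logarithmic estimate for $\sum_{0<|\omega|\le R}|\omega|^{-2}$ (split into dyadic annuli, each of which contributes a bounded constant, for a total of $O(\log R)$). Applying this with $R=\epsilon^{-1}$ and using the scaling $\epsilon\simeq \gamma^n$ from \eqref{e:scaling} gives a bound of order $\log(\epsilon^{-1})=n\log(\gamma^{-1})\lesssim \log(\gamma^{-1})$, as required. There is no serious obstacle; the main (minor) point is simply to notice that the crude estimate $|\hat{\Kg}|\le 1$ already suffices — the sharper decay of $\hat{\Kg}$ for $|\omega|\ge \gamma\epsilon^{-1}$ from Proposition~\ref{prop:Kbounds} is not needed here.
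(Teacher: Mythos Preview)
Your proof is correct and is precisely the argument the paper intends: it states only that ``the lemma follows immediately from the estimations in Proposition~\ref{prop:Kbounds}'', and you have simply written out those routine steps (explicit time integration plus the bound $1-\hat\Kg(\omega)\ge c|\epsilon\gamma^{-1}\omega|^2$ and the dyadic $\log$ estimate). Your parenthetical remark about the zero mode is also appropriate --- as written the left-hand side includes $\omega=0$, which would contribute $t$ and spoil the $\sup_{t\ge 0}$; in the applications (and in the right-hand side) the sum is over $0<|\omega|\le\epsilon^{-1}$, and this is what is actually needed.
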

The lemma follows immediately from the estimations in Proposition~\ref{prop:Kbounds}.

\begin{lemma}
\label{lemma:infinitybesovbound}
For any $\nu >0$ there exists constants $c,C(\nu)$ such that for all $X :\T^2 \to \R$ for which $\hat{X}(\omega)=0$ for all $|\omega| > \epsilon^{-2}$ we have
\[
\Norm{L^{\infty}(\T^2)}{X} \leq c\log(\epsilon^{-1}) \Norm{L^{\infty}(\Le)}{X}  \leq C(\nu)\epsilon^{-\kappa} \Norm{\Cc^{-\nu}}{X}\;.
\]
\end{lemma}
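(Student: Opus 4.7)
The plan is to prove the two inequalities separately. The first is a discrete sampling/Lebesgue-constant estimate, and the second is a Bernstein-type bound that exploits the finite Fourier support of $X$.

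For the first inequality, I would use the fact that $X$ is a trigonometric polynomial whose Fourier coefficients vanish outside a fixed window. Since $\Le$ contains $(2N+1)^2 = (2/\epsilon)^2$ points and the DFT formula \eqref{e:Fourierinversion} is exact, $X$ is completely determined by its values on $\Le$, and one has an interpolation formula of the form
\[
X(x) = \sum_{y \in \Le} \epsilon^2 X(y)\,D_\epsilon(x-y), \qquad x\in\T^2,
\]
where $D_\epsilon$ is the periodic Dirichlet-type kernel with Fourier support matching that of $X$. This yields
\[
\Norm{L^{\infty}(\T^2)}{X} \leq \Norm{L^{\infty}(\Le)}{X}\cdot \Norm{L^1(\T^2)}{D_\epsilon},
\]
and the Lebesgue constant $\Norm{L^1(\T^2)}{D_\epsilon}$ is the product of its one-dimensional counterparts, which is well known to grow like a power of $\log(\epsilon^{-1})$. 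The factor $\log(\epsilon^{-1})$ in the statement absorbs the power into the constant $c$.

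For the second inequality, I would Paley--Littlewood decompose $X = \sum_{k \geq -1} \delta_k X$ and use the elementary bound
\[
\Norm{L^{\infty}(\T^2)}{\delta_k X} \leq 2^{\nu k}\cdot 2^{-\nu k}\Norm{L^{\infty}(\T^2)}{\delta_k X} \leq 2^{\nu k}\Norm{\Cc^{-\nu}}{X}.
\]
The Fourier-support assumption kills all $k$ with $2^k$ substantially larger than $\epsilon^{-2}$, so only indices $k \leq k_\epsilon := C\log_2(\epsilon^{-1})$ contribute. Summing the resulting geometric series gives
\[
\Norm{L^{\infty}(\T^2)}{X} \leq \Norm{\Cc^{-\nu}}{X}\sum_{k\leq k_\epsilon} 2^{\nu k} \leq C(\nu)\,\epsilon^{-C'\nu}\Norm{\Cc^{-\nu}}{X},
\]
and since $\Norm{L^\infty(\Le)}{X}\leq \Norm{L^\infty(\T^2)}{X}$, combining with the first inequality (which contributes only a factor $\log(\epsilon^{-1})$) bounds $\log(\epsilon^{-1})\Norm{L^\infty(\Le)}{X}$ by $C(\nu)\,\epsilon^{-\kappa}\Norm{\Cc^{-\nu}}{X}$ for any $\kappa$ larger than a fixed multiple of $\nu$.

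There is no substantive obstacle. The only cosmetic point is that the two-dimensional Lebesgue constant is naturally $\log^2(\epsilon^{-1})$ (coming from a tensor product of one-dimensional Dirichlet kernels), not $\log(\epsilon^{-1})$, but this is irrelevant since this log factor is subsequently swallowed by the polynomial factor $\epsilon^{-\kappa}$ of the Bernstein step; one simply picks $\kappa$ slightly larger than the natural exponent coming from that step.
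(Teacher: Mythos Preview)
The paper does not give its own proof here; it simply defers to the Appendix of \cite{MourratWeber}. Your approach---interpolation from the lattice via the Dirichlet kernel for the first inequality, and a Littlewood--Paley/Bernstein summation for the second---is the standard one and is essentially what is done there, so the proposal is fine.

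One point worth flagging, since you implicitly rely on it: the hypothesis $\hat X(\omega)=0$ for $|\omega|>\epsilon^{-2}$, read literally, is too weak for your interpolation step (and indeed for the first inequality) to hold. The lattice $\Le$ has spacing $\epsilon$ and can only resolve frequencies of order $\epsilon^{-1}$; for instance $X(x)=e^{i\pi(2N+1)x_1}-1$ vanishes identically on $\Le$ but not on $\T^2$. In every application in the paper the Fourier support of $X$ is actually bounded by a fixed multiple of $\epsilon^{-1}$ (the Wick powers $\Rg^{:\kb:}$ have support of size $|\kb|\,\epsilon^{-1}$ with $|\kb|\le 2n-1$), and the corresponding lemma in \cite{MourratWeber} is stated with that hypothesis. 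Under that corrected assumption your argument goes through, and your remark about the two-dimensional Lebesgue constant being $\log^2(\epsilon^{-1})$ rather than $\log(\epsilon^{-1})$ is accurate and harmless for the purposes of the paper.
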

The proof of the above proposition is given in the Appendix of \cite{MourratWeber}.\\

We recall some bounds from section 8 in \cite{MourratWeber} regarding the semigroup associated to the diffusion. A minor difference is given by the following proposition, which depends on the scale of the model.\\
Recall that, for the heat semigroup $P_t$ and an element $X$ of $\Cc^{\nu}$ we have for $\beta > 0$
\begin{equation}
\label{e:heatsgregimpr}
\Norm{\Cc^{\nu + \beta}}{P_t X} \leq C(\nu,\beta)t^{-\frac{\beta}{2}}\Norm{\Cc^{\nu}}{X}\;.
\end{equation}
The next proposition will provide similar bounds for the approximate heat semigroup $\Pg{t}$.

\begin{proposition}
\label{prop:boundsonP}
For $\gamma$ sufficiently small, for $c_1,c_2 > 0$, $T> 0$, $\kappa>0$.\\
Then
\begin{itemize}
\item For $\beta >0$ and $0 \leq \lambda \leq 1$ there exists $C = C(c_1,T,\kappa,\beta,\lambda)$ such that for all functions $X: \T^2 \to \R$ with $\hat{X}(\omega) = 0$ for all $|\omega|\geq c_1 \epsilon^{-1}\gamma$ we have that for all $t \in [0,T]$ and $\nu \in \R$
\begin{align*}
\Norm{\Cc^{\nu+\beta -\kappa}}{P^{\gamma}_t X} &\leq C t^{-\frac{\beta}{2}} \Norm{\Cc^{\nu}}{X}\\
\Norm{\Cc^{\nu -\kappa}}{(P^{\gamma}_t -P_t)X} &\leq C \gamma^{\lambda} \left( t^{-\frac{\lambda}{2}} \Norm{\Cc^{\nu}}{X} \wedge \Norm{\Cc^{\nu+\lambda}}{X}\right)\\
\Norm{\Cc^{\nu -\kappa}}{\Kg \star X} &\leq C \Norm{\Cc^{\nu}}{X}\\
\Norm{\Cc^{\nu -\kappa}}{\Kg \star X - X} &\leq C \gamma^{2\lambda}\Norm{\Cc^{\nu+2 \lambda}}{X}\\
\end{align*}
\item For $\beta>0$ and $\lambda > 0$ there exists $C =C(c_2,T,\kappa,\beta,\lambda)$ such that for any distribution $X$ with  $\hat{X}(\omega) = 0$ for $|\omega|\leq c_2 \epsilon^{-1}\gamma$, for $t \in [0,T]$ and $\nu \in \R$
\begin{equation}
\label{e:heatkernelhigh}
\Norm{\Cc^{\nu+\beta -\kappa}}{P^{\gamma}_t X}   \leq C t^{-\beta \frac{n}{2(n-1)} - \lambda} (\epsilon \gamma^{-1})^{\lambda}  \Norm{\Cc^{\nu}}{X} \\
\end{equation}
and if $0 \leq \beta \leq 2$
\[
\Norm{\Cc^{\nu+\beta -\kappa}}{P^{\gamma}_t \Kg \star X}  \leq C t^{-\frac{\beta}{2}} \Norm{\Cc^{\nu}}{X}\;.
\]
\end{itemize}
\end{proposition}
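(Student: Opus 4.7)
The proof is an exercise in Fourier multiplier analysis: we work with the Paley-Littlewood blocks $\delta_k$ that define $\Norm{\Cc^\alpha}{\cdot}$, and for each of the six inequalities we show that the relevant Fourier multiplier, when restricted to a dyadic shell $\{|\omega|\sim 2^k\}$, admits bounds that, summed against the shell weights $2^{\alpha k}$, yield the desired inequality; pointwise bounds on the symbols pass to $L^\infty\to L^\infty$ bounds on each shell by standard Bernstein-type arguments, which are justified by the derivative bounds on $\hat\Kg$ provided by Proposition~\ref{prop:Kbounds}. The symbol of $\Pg{t}$ is
\[
\widehat{\Pg{t}}(\omega)\;=\;\exp\bigl(-t\tfrac{\gamma^2}{\epsilon^2}(1-\hat\Kg(\omega))\bigr),
\]
and the two facts we will use repeatedly are the global lower bound $\frac{\gamma^2}{\epsilon^2}(1-\hat\Kg(\omega))\geq c|\omega|^2$ on $\{|\omega|\leq\epsilon^{-1}\}$ (direct from Proposition~\ref{prop:Kbounds}) and the Taylor expansion $\tfrac{\gamma^2}{\epsilon^2}(1-\hat\Kg(\omega))=|\omega|^2+\mathcal{O}(\epsilon^2\gamma^{-2}|\omega|^4)$ valid when $|\epsilon\gamma^{-1}\omega|$ is bounded, which follows from the normalisation $\int\KK=1$, $\int\KK|x|^2=4$.

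For $X$ with low-frequency Fourier support $\{|\omega|\leq c_1\epsilon^{-1}\gamma\}$, both facts yield $|\widehat{\Pg{t}}(\omega)|\leq e^{-ct|\omega|^2}$, so that the first inequality reduces to the classical heat-kernel smoothing on Besov spaces, the loss $2^{-\kappa k}$ giving the $\kappa$. For the semigroup difference, we combine $|e^{-a}-e^{-b}|\leq|a-b|^\lambda(e^{-a}\vee e^{-b})^{1-\lambda}$ for $a,b\geq 0$, $\lambda\in[0,1]$ with the Taylor estimate to obtain a multiplier pointwise bounded by $(t\epsilon^2\gamma^{-2}|\omega|^4)^{\lambda}e^{-ct|\omega|^2(1-\lambda)}$; the prefactor $(\epsilon\gamma^{-1})^{2\lambda}=\gamma^{2\lambda(n-1)}$ dominates $\gamma^{\lambda}$ since $n\geq 2$, and $t^{\lambda}|\omega|^{4\lambda}e^{-ct|\omega|^2}$ can be exchanged either for $t^{-\lambda/2}$ (by $x^{2\lambda}e^{-x}\lesssim 1$) or for $|\omega|^{2\lambda}$ (absorbed into the $\Cc^{\nu+\lambda}$ norm), giving the min-form. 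The bounds on $\Kg\star$ and $\Kg\star-\mathrm{Id}$ follow at once from $|\hat\Kg(\omega)|\leq 1$ and $|1-\hat\Kg(\omega)|\leq C|\epsilon\gamma^{-1}\omega|^2$.

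For $X$ with high-frequency Fourier support $\{|\omega|\geq c_2\epsilon^{-1}\gamma\}$, the quantity $|\epsilon\gamma^{-1}\omega|$ is bounded below and so $1-\hat\Kg(\omega)$ is bounded below by a positive constant; hence $|\widehat{\Pg{t}}(\omega)|\leq e^{-ct\gamma^2/\epsilon^2}=e^{-ct/\alpha}$. The first high-frequency inequality is obtained by converting this decay to a polynomial bound via $e^{-x}\leq C_r\,x^{-r}$ and trading the resulting $(t/\alpha)^{-r}$ against the growth $2^{\beta k}\leq\epsilon^{-\beta}$; the scaling $\alpha=\gamma^{2n-2}$, $\epsilon=\gamma^n$ then forces the exponent $\frac{n}{2(n-1)}$ in $t^{-\beta n/(2(n-1))-\lambda}$ as the unique balance making $\alpha^r\epsilon^{-\beta}$ match $(\epsilon\gamma^{-1})^\lambda$ up to factors that vanish with $\gamma$. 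For the second bound, with $\Pg{t}\Kg\star X$, one uses the \emph{global} bound $|\widehat{\Pg{t}}(\omega)|\leq e^{-ct|\omega|^2}$ (which holds at high frequencies also, thanks to the universal lower bound on the dispersion relation) together with $|\hat\Kg|\leq 1$, yielding the classical parabolic gain $t^{-\beta/2}$; the restriction $\beta\in[0,2]$ reflects that $\Kg$ only produces second-order cancellation.

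The principal difficulty is the bookkeeping between the three characteristic scales $|\omega|\sim 1$, $|\omega|\sim\epsilon^{-1}\gamma$ and $|\omega|\sim\epsilon^{-1}$ of the dispersion relation, and the selection of interpolation exponents $\lambda,\kappa,r$ so that powers of $\gamma$ and $\epsilon$ cancel correctly under $\epsilon=\gamma^n$; once this is organised, the Fourier-multiplier-to-shell-operator passage is routine via Bernstein estimates built from Proposition~\ref{prop:Kbounds}. The overall structure parallels Propositions 8.4-8.5 of \cite{MourratWeber}, the essential modification being the new scaling $\epsilon\sim\gamma^n$ replacing the $\Phi^4_2$ relation $\epsilon\sim\gamma$ used there, which is precisely what produces the new exponent $\frac{n}{2(n-1)}$ in \eqref{e:heatkernelhigh}.
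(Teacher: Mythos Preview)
Your proof is correct and follows the same Fourier-multiplier strategy as the paper, which simply defers to \cite[Lemma~8.4]{MourratWeber} for the low-frequency estimates and treats only the high-frequency bound \eqref{e:heatkernelhigh} explicitly via the chain $e^{-t\epsilon^{-2}\gamma^2(1-\hat\Kg(\omega))}\leq e^{-t\gamma^{2-2n}/C_1}\lesssim t^{-\beta n/(2(n-1))}|\omega|^{-\beta}$. Your sketch fills in more detail than the paper does, in a manner fully consistent with the argument of \cite{MourratWeber} under the new scaling $\epsilon\simeq\gamma^n$.
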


This correspond to lemma 8.4 in \cite{MourratWeber}, we highlight a small difference in \eqref{e:heatkernelhigh}, due to the different scaling. The bound produced is actually better for high values of $n$.\\
A proof of \eqref{e:heatkernelhigh} is given using the inequality, valid for $\epsilon^{-1}\gamma \leq |\omega| \leq \epsilon^{-1}$
\[
e^{- t \epsilon^{-2}\gamma^2 (1 - \hKg(\omega))} \leq \exp \left( - \frac{t}{C_1} \gamma^{2 - 2n}\right) \lesssim t^{-\beta\frac{n}{2(n-1)}} \gamma^{2 \beta n} \lesssim t^{-\beta\frac{n}{2(n-1)}} |\omega|^{-\beta}\;.
\]

\begin{proposition}
\label{prop:differenceheatkernel}
For $\gamma$ small enough, for $T >0$, $\kappa>0$ and $0 \leq \lambda \leq 1$ there exists a constant $C=C(T,\kappa,\beta,\lambda)$ such that for $t \in [0,T]$, $\nu \in \R$ and any distribution $X$ on $\T^2$.
\begin{align*}
\Norm{\Cc^{\nu-\kappa}}{(P^{\gamma}_t - P_t)X} &\leq C(\epsilon \gamma^{-1})^{\lambda}\left( t^{- \frac{n}{2(n-1)} \lambda}\Norm{\Cc^{\nu}}{X} \wedge \Norm{\Cc^{\nu+\lambda}}{X}\right)\\
\Norm{\Cc^{\nu-\kappa}}{(P^{\gamma}_t \Kg - P_t)X} &\leq C(\epsilon \gamma^{-1})^{\lambda}\left( t^{-\frac{\lambda}{2}}\Norm{\Cc^{\nu}}{X} \wedge \Norm{\Cc^{\nu+\lambda}}{X}\right)\:,
\end{align*}
\end{proposition}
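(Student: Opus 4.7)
The plan is to treat both inequalities by the same Littlewood--Paley decomposition $X = X^{low} + X^{high}$, where $\widehat{X^{low}}$ is supported in $\{|\omega|\leq c_1\epsilon^{-1}\gamma\}$ and $\widehat{X^{high}}$ in $\{|\omega|\geq c_2\epsilon^{-1}\gamma\}$, and then to apply the two bullets of Proposition~\ref{prop:boundsonP} to each piece. The gain $(\epsilon\gamma^{-1})^{\lambda}$ will come from two different mechanisms on the two spectral regions, which is why the wedge appears: the $t^{-\frac{n}{2(n-1)}\lambda}\Norm{\Cc^{\nu}}{X}$-alternative is the sharp one when $t$ is not too small, while $\Norm{\Cc^{\nu+\lambda}}{X}$ is sharper for small $t$.

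First I would handle the low-frequency part. Since $\widehat{X^{low}}$ is supported in $\{|\omega|\leq c_1\epsilon^{-1}\gamma\}$, the second estimate of the first bullet of Proposition~\ref{prop:boundsonP} gives
\[
\Norm{\Cc^{\nu-\kappa}}{(P^{\gamma}_t-P_t)X^{low}} \leq C\gamma^{\lambda'}\bigl(t^{-\lambda'/2}\Norm{\Cc^{\nu}}{X^{low}}\wedge \Norm{\Cc^{\nu+\lambda'}}{X^{low}}\bigr)
\]
for any $\lambda'\in[0,1]$. Setting $\lambda'=(n-1)\lambda$ (which lies in $[0,1]$ for $\lambda\leq 1/(n-1)$; for larger $\lambda$ one interpolates between $\lambda=0$ and the case $\lambda=1/(n-1)$), the scaling relation $\epsilon\simeq\gamma^{n}$ of \eqref{e:scaling} gives $\gamma^{(n-1)\lambda}=(\epsilon\gamma^{-1})^{\lambda}$, as required. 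The time exponent $t^{-(n-1)\lambda/2}$ arising this way is actually \emph{better} than $t^{-n\lambda/(2(n-1))}$ on the low-frequency part, so the bound is absorbed into the target estimate.

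Next I would control the high-frequency part. For $P^{\gamma}_tX^{high}$ the second bullet of Proposition~\ref{prop:boundsonP} (i.e.\ \eqref{e:heatkernelhigh}) applies directly, with $\beta$ absorbing $\kappa$, yielding a bound of the form $C t^{-n\lambda/(2(n-1))}(\epsilon\gamma^{-1})^{\lambda}\Norm{\Cc^\nu}{X^{high}}$; this is the step that fixes the precise time exponent in the first line of the proposition. For $P_tX^{high}$, the usual heat-kernel bound \eqref{e:heatsgregimpr} together with the super-polynomial damping $e^{-ct\epsilon^{-2}\gamma^{2}}$ of $P_t$ on $|\omega|\geq \epsilon^{-1}\gamma$ gives a term of lower order than what we need. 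To obtain the $(\epsilon\gamma^{-1})^{\lambda}$ prefactor, I use the elementary Besov embedding
\[
\Norm{\Cc^{\nu}}{X^{high}}\leq C(\epsilon\gamma^{-1})^{\lambda}\Norm{\Cc^{\nu+\lambda}}{X^{high}},
\]
valid because Littlewood--Paley blocks of $X^{high}$ are only nonzero at scales $2^k\gtrsim \epsilon^{-1}\gamma$. Using this embedding in the opposite direction yields the alternative form $\Norm{\Cc^{\nu+\lambda}}{X}$ inside the wedge. Putting the two frequency regimes together proves the first inequality.

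The second inequality, with $\Kg$, follows by the same argument, except that the Fourier multiplier of $P^{\gamma}_t\Kg$ on the high-frequency region carries the extra decay factor $|\widehat{\Kg}(\omega)|\lesssim|\epsilon\gamma^{-1}\omega|^{-2}$ from Proposition~\ref{prop:Kbounds}. This additional factor restores the usual heat-kernel time-exponent: using the final estimate of the second bullet of Proposition~\ref{prop:boundsonP} (i.e.\ $\Norm{\Cc^{\nu+\beta-\kappa}}{P^{\gamma}_t\Kg\star X}\leq Ct^{-\beta/2}\Norm{\Cc^{\nu}}{X}$) one obtains $t^{-\lambda/2}$ instead of $t^{-n\lambda/(2(n-1))}$ on the high-frequency piece. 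The hard part, which is essentially the sharp bookkeeping done in \cite[Lemma~8.4]{MourratWeber}, is verifying that the three bullet estimates of Proposition~\ref{prop:boundsonP} can indeed be combined cleanly to absorb the small losses $\kappa$; once this is granted, the whole argument is a direct assembly.
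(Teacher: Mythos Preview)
The paper does not give an explicit proof of this proposition; it is stated in the appendix as a direct variant of \cite[Lemma~8.4]{MourratWeber}, to be obtained by the same Fourier-multiplier\,/\,Littlewood--Paley argument as Proposition~\ref{prop:boundsonP}. Your plan---split $X=X^{low}+X^{high}$ at the scale $|\omega|\sim\epsilon^{-1}\gamma$, apply the two bullets of Proposition~\ref{prop:boundsonP} to each piece, and use the Besov embedding $\Norm{\Cc^{\nu}}{X^{high}}\lesssim(\epsilon\gamma^{-1})^{\lambda}\Norm{\Cc^{\nu+\lambda}}{X^{high}}$ on the high modes---is exactly that argument, so the approach matches.

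One step needs tightening. On the low-frequency piece you convert the prefactor $\gamma^{\lambda'}$ of Proposition~\ref{prop:boundsonP} into $(\epsilon\gamma^{-1})^{\lambda}$ via $\lambda'=(n-1)\lambda$, and for $\lambda>1/(n-1)$ you appeal to ``interpolation between $\lambda=0$ and $\lambda=1/(n-1)$''. That interpolation can only produce $(\epsilon\gamma^{-1})^{1/(n-1)}$, not $(\epsilon\gamma^{-1})^{\lambda}$, so as written the argument does not cover the full range $0\le\lambda\le 1$ when $n\ge 3$. The fix is not to quote Proposition~\ref{prop:boundsonP} as a black box but to go back to the multiplier estimate that proves it: on $|\omega|\le c_1\epsilon^{-1}\gamma$ one has $\epsilon^{-2}\gamma^{2}(1-\hat K_\gamma(\omega))-\pi^2|\omega|^2=O(\epsilon^{2}\gamma^{-2}|\omega|^4)$, which gives a difference of multipliers of order $(\epsilon\gamma^{-1})^{2}|\omega|^{2}e^{-ct|\omega|^2}$ and hence, after interpolation with the trivial bound, the prefactor $(\epsilon\gamma^{-1})^{\lambda}$ for any $\lambda\in[0,2]$ directly. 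In other words, the $\gamma^{\lambda}$ in the low-frequency line of Proposition~\ref{prop:boundsonP} is really $(\epsilon\gamma^{-1})^{\lambda}$ in the present scaling (the two coincide in \cite{MourratWeber}, where $\epsilon=\gamma^{2}$); once you use this, your low-frequency bound goes through for all $\lambda\in[0,1]$ without any extra interpolation. The rest of your argument is fine.
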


The next lemma will be used in Section~\ref{sec:nonlinear}. It is proven in the same way as the above propositions

\begin{lemma}
\label{lemma:boundsonsemigroup}
For $0 < \lambda$ and any $\kappa > 0$ there exists a constant $C=C(T,\lambda,\kappa)$ such that for $0 \leq t \leq T$ such that 
\begin{equation}
\Norm{\Cc^{-\nu} \to \Cc^{\beta}}{ P_t - \Pg{t} \Kg} \leq C (\epsilon\gamma^{-1})^{2 \lambda } t^{-\lambda  - \frac{\nu + \beta}{2}-\kappa}\;.
\end{equation}
\end{lemma}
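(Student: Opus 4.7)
The lemma is a Fourier-multiplier estimate on the torus, so the plan is to carry out symbol analysis directly. Since $P_t$, $\Pg{t}$, and $\Kg$ are all Fourier multipliers, the symbol of the difference operator at frequency $\omega$ is
\[
m_\gamma(\omega,t) = e^{-t|\omega|^2} - \hat{\Kg}(\omega)\, e^{-t \epsilon^{-2}\gamma^2(1 - \hat{\Kg}(\omega))}.
\]
The key step will be to establish the pointwise estimate
\[
|m_\gamma(\omega,t)| \lesssim \big(\epsilon\gamma^{-1}|\omega|\big)^{2\lambda}\,(1 + t|\omega|^2)^{\lambda}\, e^{-c\, t|\omega|^2}
\]
uniformly in $\omega \in \Z^2 \setminus \{0\}$ for every $\lambda \geq 0$.

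For low frequencies $|\omega| \leq c_0 \epsilon^{-1}\gamma$ I would Taylor-expand $\hat{\Kg}$ using the normalization \eqref{e:norm-kk} together with Proposition~\ref{prop:Kbounds}. This gives $1 - \hat{\Kg}(\omega) = |\omega|^2\epsilon^2\gamma^{-2} + O((\epsilon\gamma^{-1}|\omega|)^4)$, so the exponents of the two heat-like factors differ by $O(t(\epsilon\gamma^{-1}|\omega|)^2|\omega|^2)$ while $\hat{\Kg}(\omega) - 1 = O((\epsilon\gamma^{-1}|\omega|)^2)$. Subtracting and using the trivial bound $|m_\gamma|\leq 2$, interpolation gives the target estimate for $0\leq\lambda\leq 1$, and iteration (via a higher-order Taylor expansion, which is available since $\hat{\Kg}$ is smooth and the normalization kills the linear term) extends it to arbitrary $\lambda>0$. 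For high frequencies $|\omega|> c_0 \epsilon^{-1}\gamma$, Proposition~\ref{prop:Kbounds} gives $|\hat{\Kg}(\omega)| \lesssim (\epsilon\gamma^{-1}|\omega|)^{-2} \leq 1 \leq (\epsilon\gamma^{-1}|\omega|)^{2\lambda}$, while $|e^{-t|\omega|^2}|\leq 1$, so the bound holds trivially with the prefactor $(\epsilon\gamma^{-1}|\omega|)^{2\lambda}$ absorbing everything.

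To convert this pointwise bound into the operator norm $\Cc^{-\nu}\to\Cc^{\beta}$, I would plug it into the Paley-Littlewood characterization of the Besov norms: on a dyadic block $|\omega|\sim 2^k$, the projection $\delta_k X$ satisfies $\|\delta_k X\|_{L^\infty}\lesssim 2^{k\nu}\Norm{\Cc^{-\nu}}{X}$, hence
\[
\Norm{\Cc^\beta}{(P_t - \Pg{t}\Kg) X}
\lesssim (\epsilon\gamma^{-1})^{2\lambda}\,\sup_k 2^{k(\beta+\nu+2\lambda)}(1 + t\, 2^{2k})^{\lambda} e^{-c t\, 2^{2k}}\,\Norm{\Cc^{-\nu}}{X}.
\]
Optimizing over $k$ through the substitution $u = t\, 2^{2k}$ produces the factor $t^{-(\beta+\nu)/2 - \lambda}$ up to a constant, and any remaining lower-order loss (from the Besov summation and the interpolation) is harmlessly absorbed into the $t^{-\kappa}$ slack allowed in the statement.

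The main obstacles I anticipate are (i) extending the symbol bound to all $\lambda>0$ cleanly, which requires going beyond the second-order Taylor expansion that is immediate from Proposition~\ref{prop:Kbounds}, and (ii) handling the transition between the two frequency regimes near $|\omega|\sim \epsilon^{-1}\gamma$ without generating spurious factors; both are essentially bookkeeping, but the exponential decay $e^{-ct|\omega|^2}$ is what makes the dyadic optimization work with an arbitrarily large exponent $\lambda$. The rest of the argument mirrors the proofs of Proposition~\ref{prop:boundsonP} and Proposition~\ref{prop:differenceheatkernel} in \cite{MourratWeber}, to which this lemma is the natural endpoint strengthening.
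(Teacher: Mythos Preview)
Your overall strategy --- symbol analysis of the multiplier $m_\gamma(\omega,t)$ followed by a Paley--Littlewood argument --- is exactly what the paper intends (it simply refers back to the proofs of Propositions~\ref{prop:boundsonP} and~\ref{prop:differenceheatkernel}). The low-frequency computation is correct and delivers the bound for $0<\lambda\leq 1$.

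There is, however, a genuine gap in your high-frequency argument. The claimed pointwise inequality
\[
|m_\gamma(\omega,t)|\lesssim (\epsilon\gamma^{-1}|\omega|)^{2\lambda}(1+t|\omega|^2)^{\lambda}e^{-ct|\omega|^2}
\]
is false for $|\omega|\gg \epsilon^{-1}\gamma$: the piece $\hat\Kg(\omega)\,e^{-t\epsilon^{-2}\gamma^2(1-\hat\Kg(\omega))}$ only decays like $e^{-ct\,\epsilon^{-2}\gamma^2}$, not like $e^{-ct|\omega|^2}$, so your right-hand side can be exponentially smaller than the left. Saying that ``the prefactor $(\epsilon\gamma^{-1}|\omega|)^{2\lambda}$ absorbs everything'' ignores the factor $e^{-ct|\omega|^2}$ sitting on the target side. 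The fix is to bound the two summands of $m_\gamma$ separately in this regime: for $e^{-t\pi^2|\omega|^2}$ one inserts $1\leq (c_0^{-1}\epsilon\gamma^{-1}|\omega|)^{2\lambda}$ and proceeds as in your dyadic optimization; for $\hat\Kg(\omega)\,e^{-t\epsilon^{-2}\gamma^2(1-\hat\Kg(\omega))}$ one uses $|\hat\Kg(\omega)|\lesssim(\epsilon\gamma^{-1}|\omega|)^{-2}$ together with $e^{-ct(\epsilon\gamma^{-1})^{-2}}\lesssim \big(t(\epsilon\gamma^{-1})^{-2}\big)^{-a}$ for $a=\lambda+\tfrac{\nu+\beta}{2}+\kappa$, which after maximising over $|\omega|\geq c_0\epsilon^{-1}\gamma$ (using $\nu+\beta<2$) yields exactly $(\epsilon\gamma^{-1})^{2\lambda}t^{-\lambda-(\nu+\beta)/2-\kappa}$.

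A minor point: the extension to arbitrary $\lambda>1$ via ``higher-order Taylor expansion'' is not available, since the contribution $(1-\hat\Kg(\omega))e^{-t\pi^2|\omega|^2}$ is genuinely of order $(\epsilon\gamma^{-1}|\omega|)^2$ and no smaller. This is harmless because the paper only applies the lemma with $\lambda=\tfrac12$, and the statement should be read with the same restriction $0<\lambda\leq 1$ as in Proposition~\ref{prop:differenceheatkernel}.
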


\end{appendix}

\bibliographystyle{Martin}
\bibliography{references}

\end{document}